\def\maxwidth{ %
  \ifdim\Gin@nat@width>\linewidth
    \linewidth
  \else
    \Gin@nat@width
  \fi
}
\definecolor{fgcolor}{rgb}{0.345, 0.345, 0.345}
\definecolor{shadecolor}{rgb}{.97, .97, .97}
\definecolor{messagecolor}{rgb}{0, 0, 0}
\definecolor{warningcolor}{rgb}{1, 0, 1}
\definecolor{errorcolor}{rgb}{1, 0, 0}
\def\mbe{\mathbb{E}}%
\def\cov{\mathrm{Cov}}%
\def\norm#1{\left\Vert #1\right\Vert }
\def\derivat#1#2#3{\left.\frac{d#1}{d#2}\right\vert_{#3}}
\def\partialat#1#2#3{\left.\frac{\partial#1}{\partial#2}\right\vert_{#3}}
\def\fwd{\textsc{ForwardModeAD}}
\newcommandx\Gk[1][usedefault, 1=k]{d^{#1}_\theta G}
\newcommandx\Gkw[1][usedefault, 1=k]{\delta_w d^{#1} G}
\newcommandx\gnk[2][usedefault, 1=k, 2=n]{d^{#1}_\theta g_{#2}}
\newcommandx\gk[1][usedefault, 1=k]{d^{#1}_\theta g}
\newcommandx\dtheta[2][usedefault, 1=k, 2=\wt]{\delta^{#1}_w \hat\theta(#2)}
\newcommandx\Hw[1][usedefault, 1=\wt]{H(#1)}
\def\Hone{\hat{H}}
\def\onevec{1_{N}}
\def\deltaw{\Delta w}
\def\wdom{\Omega_w}
\def\wepsilon{\varepsilon}
\def\allthetadom{\Omega_\theta}
\def\allthetadom{\Omega_\theta}
\def\allwdom{\mathscr{W}}
\def\kij{k_{\mathrm{IJ}}}%
\newcommandx\deltak[1][usedefault, 1=k]{\delta_{#1}}
\def\deltamax{\delta}
\def\cop{C_{op}}%
\def\cset{C_{set}}%
\newcommandx\mk[1][usedefault, 1=k]{M_{#1}}
\def\lh{\mk[3]} 
\def\deltabound{\rho}%
\def\copw{\tilde{C}_{op}}
\newcommandx\dthetaconst[1][usedefault, 1=k]{C_{\Delta;#1}}
\newcommandx\ijconst[1][usedefault, 1=k]{C_{\mathrm{IJ};#1}}
\newcommandx\thetaw[1][usedefault, 1=\wt]{\hat\theta(#1)}
\def\thetaone{\hat\theta}
\def\wt{\tilde{w}}
\newcommandx\thetaij[2][usedefault, 1=k, 2=w]{\hat\theta^{\mathrm{IJ}}_{#1}(#2)}
\newcommandx\errij[1][usedefault, 1=k]{\varepsilon^\mathrm{IJ}_{#1}}
\def\kvec{\mathbb{K}}%
\newcommandx\dterm[3][usedefault, 1=\kvec, 2=\omega, 3=\wt]
\newcommandx\dtermargs[1][usedefault, 1=k]{\mathscr{T}_{#1}}
\newcommandx\dthetaset[1][usedefault, 1=k]{\Theta_{#1}}
\def\vk{V_k}
\newcommandx\tk[2][usedefault, 1=k, 2=\nset]{T_{#1}(#2)}
\newcommandx\gsupnk[2][usedefault, 1=k, 2=n]{\gamma_{#2}^{(#1)}}
\newcommandx\gsupk[1][usedefault, 1=k]{\gamma^{(#1)}}
\def\nset{\mathscr{N}}
\newcommandx\quant[2][usedefault, 1=k, 2=\alpha]{q_{#2}^{(#1)}}
\newcommandx\cvwdom[1][usedefault, 1=\kappa]{\allwdom_{CV;#1}}
\def\loodom{\allwdom_{\mathrm{loo}}}
\def\gimp{\tilde{G}}
\def\zerovec{0_D}
\def\zdom{\Omega_z}
\newcommandx\fk[1][usedefault, 1=k]{f^{(#1)}}
\newcommandx\fdk[1][usedefault, 1=k]{f_d^{(#1)}}
\def\deltax{\left(x_1 - x_0\right)}%
\def\xt{x(t)}%
\theoremstyle{plain}
\newtheorem{lem}{Lemma}
\newtheorem{thm}{Theorem}
\newtheorem{prop}{Proposition}
\newtheorem{cond}{Condition}
\newtheorem{assu}{Assumption}
\newtheorem{cor}{Corollary}
\theoremstyle{definition}
\newtheorem{defn}{Definition}
\title{A Higher-Order Swiss Army Infinitesimal Jackknife}
\author{
  Ryan Giordano 
  \and
  Michael I.~Jordan 
  \and
  Tamara Broderick 
}
\begin{document}


\newcommand{\gonlyassums}{\assurangeref{thetadom}{bounded}}
\newcommand{\gassums}{\assurangeref{thetadom}{complexity}}
\newcommand{\wconds}{\condref{theta_small}}
\newcommand{\allassums}{\gassums{} and \wconds{}}
\newcommand{\seeproof}[1]{(Proof \proofpageref[vref]{#1}.)}

\maketitle

\begin{abstract}
Cross validation (CV) and the bootstrap are ubiquitous model-agnostic tools for
assessing the error or variability of machine learning and statistical
estimators. However, these methods require repeatedly re-fitting the model with
different weighted versions of the original dataset, which can be prohibitively
time-consuming.  For sufficiently regular optimization problems the optimum
depends smoothly on the data weights, and so the process of repeatedly
re-fitting can be approximated with a Taylor series that can be often evaluated
relatively quickly.  The first-order approximation is known as the
``infinitesimal jackknife'' in the statistics literature and has been the
subject of recent interest in machine learning for approximate CV. In this work,
we consider high-order approximations, which we call the ``higher-order
infinitesimal jackknife'' (HOIJ). Under mild regularity conditions, we provide a
simple recursive procedure to compute approximations of all orders with
finite-sample accuracy bounds. Additionally, we show that the HOIJ can be
efficiently computed even in high dimensions using forward-mode automatic
differentiation.  We show that a linear approximation with bootstrap weights
approximation is equivalent to those provided by asymptotic normal
approximations.  Consequently, the HOIJ opens up the possibility of enjoying
higher-order accuracy properties of the bootstrap using local approximations.
Consistency of the HOIJ for leave-one-out CV under different asymptotic regimes
follows as corollaries from our finite-sample bounds under additional regularity
assumptions.  The generality of the computation and bounds motivate the name
``higher-order Swiss Army infinitesimal jackknife.''

\end{abstract}

\section{Introduction}\label{sec:introduction}
When statistical machine learning models are employed in real-world settings it
is important to assess their error and variability.  Unavoidable randomness in
data collection and the need for our model to extrapolate well to data not yet
observed motivates the question: ``How would my inference have changed had I
observed different data?''  To answer this question when we are not confident in
the truth of our stochastic model but are only willing to assume that our data
is exchangeable we turn to model-agnostic procedures.  Typically, such
procedures rely on re-fitting the model with modified versions of the original
dataset.  Instances of this framework are cross-validation (CV), which leaves
out a certain number of data points, and the bootstrap, which uses a sample of
data points drawn with replacement from the original dataset.  Though appealing
for their simplicity and generality, the fact that the CV and bootstrap require
repeatedly re-fitting the model means that they are computationally intensive,
sometimes prohibitively so.

We propose approximating the dependence of a fitted model on a modified
dataset by a higher-order Taylor series approximation.  We call this Taylor
series approximation a ``higher-order infinitesimal jackknife'' (HOIJ), as the
first-order version of the Taylor series is known in the statistical literature
as the ``infinitesimal jackknife''.  The ``jackknife'' was originally so named
because ``[it] refers to the jack-of-all-trades and the master of none, and is
intended to denote a tool which has a very wide application but which can in
most cases be improved upon by more special-purpose equipment''
\citep{barnard:1974:cvchoicediscussion}.  We aim to similarly provide
a fast, usable, general-purpose perturbative approximation to CV and bootstrap.
To this end, we provide finite-sample error bounds for the difference between
the HOIJ and exact refitting under assumptions that can be verified with the
dataset at hand, without recourse to asymptotics or stochastic assumptions.
Nevertheless, under reasonable additional assumptions, our results also provide
asymptotic results that match the existing literature for first-order
approximations.

We additionally show that the HOIJ can be efficiently computed even in high
dimensions without storing large derivative arrays in memory.
Evaluating the approximation for a $D$-dimensional parameter requires first
calculating and factorizing a single $D\times D$ matrix, but, once this is done,
evaluating the Taylor series with each new set of weights requires only
matrix-vector and derivative-vector products, which can be computed easily and
efficiently using automatic differentiation.  Due to the generality of our
methods, we propose that the HOIJ adds extra tools to the ``Swiss Army
infinitesimal jackknife'' \citep{giordano:2019:ij}.

\section{Setup and notation}\label{sec:notation}
We will consider estimators $\hat\theta(w) \in \mathbb{R}^D$ that are solutions
to the following system of equations:
\begin{align}\eqlabel{g_definition}
\thetaw[w] := \theta\quad\textrm{such that}\quad
G(\theta, w) :=
    \frac{1}{N} \left(g_0(\theta) + \sum_{n=1}^N w_n g_n(\theta)\right) = 0.
\end{align}
$G(\theta, w)$ and $g_n(\theta)$ are vectors of length $D$, and $w=(w_1, \ldots,
w_N)$ is a vector of length $N$. The term $g_0(\theta)$ can be thought of as a
prior or regularization term, whereas $g_n(\theta)$ for $n>1$ can be thought of
as depending on the data. \Eqref{g_definition} arises, for example when finding
a local solution to an smooth optimization problem: suppose that $\thetaw[w]$ is
chosen to solve $\thetaw[w] := \mathrm{argmin}_{\theta} \sum_{n=1}^N w_n
f(\theta, x_n) + f_0(\theta)$ for some dataset $\{x_n: n=1,\ldots,N\}$, and
smooth scalar-valued functions $f(\theta, x_n)$ and $f_0(\theta)$.  Then a local
optimum has the form of \eqref{g_definition} with $g_n(\theta) := \partial
f(\theta, x_n) / \partial \theta$ and $g_0(\theta) := \partial f_0(\theta) /
\partial \theta$.  As discussed in \citet{giordano:2019:ij},
\eqref{g_definition} also encompasses sequences of optimization problems in
which the solution to one optimization problem is used as a hyperparameter in
another.

We will assume that we have found the solution of \eqref{g_definition} for the
original dataset, i.e., with $w$ equal to a length-$N$ vector containing all
ones.  Denote the all-ones weight vector $\onevec$, and the corresponding
estimate as $\thetaone := \thetaw[\onevec]$. Performing CV or the bootstrap
amounts to evaluating $\thetaw[w]$ with different weights.  For example,
leave-$\kappa$-out CV would use weights with exactly $\kappa$ zeros and the rest
ones, $\ell$-fold cross validation would use weights with $\lfloor N / \ell
\rfloor$ zeros and the rest ones, and the bootstrap would use weights drawn from
a multinomial distribution with count $N$, $N$ outcomes, and probabilities
$N^{-1}$.

Each weight vector requires a new solution of \eqref{g_definition}, which can be
computationally expensive.  Rather than solving \eqref{g_definition} exactly, we
might approximate $\thetaw[w]$ with a Taylor series expansion in $w$ centered at
$\onevec$.  The first-order Taylor series approximation is known as the
``infinitesimal jackknife'' due to its relationship to the jackknife bias and
variance  estimator \citep{jaeckel:1972:infinitesimal}.  In this work, we
provide tools to compute and analyze the \emph{higher-order infinitesimal
jackknife} (HOIJ), by which we simply mean higher-order Taylor series expansions
of $\thetaw[w]$.

In order to define and analyze the HOIJ, we will need to represent and
manipulate higher-order derivatives, and for this it will be helpful to
introduce some compact notation.  Let $\deltaw := w - \onevec$ and, as with $w$,
denote the $n$-th element of $\deltaw$ as $\deltaw_n$.   For a particular $w$,
we will be considering directional derivatives of $\thetaw$ in the direction
$\deltaw$ evaluated at some (possibly different) weight vector $\wt$.  We denote
such a directional derivative with the following shorthand:
\begin{align*}
    \dtheta[1] := \sum_{n=1}^N  \partialat{\thetaw}{w_n}{\wt} \deltaw_n
    \textrm{,}\quad\quad
    \dtheta[2] := \sum_{n_1=1}^N \sum_{n_2=1}^N
        \partialat{{}^2\thetaw}{w_{n_1} \partial w_{n_2}}{\wt}
            \deltaw_{n_1} \deltaw_{n_2},
\end{align*}
and so on, where the $k$-th order directional derivative is denoted $\dtheta$.
We will only be considering directional derivatives of $\thetaw$ in the direction
$\deltaw$, so, to avoid clutter, the direction $\deltaw$ is left implicit
in the notation $\dtheta$.

For some fixed integer $\kij \ge 0$, we will evaluate and analyze a $\kij$-th
order Taylor series centered at $\onevec$ and evaluated at $w$. In our notation,
this Taylor series and its error can be written as follows.
%
%
\begin{defn}\deflabel{ij}
%
For a fixed $\kij \ge 0$, when $\dtheta[k][\onevec]$ exists for all
$k \le \kij$, define
\begin{align*}
\thetaij[\kij] :=
    \thetaone + \sum_{k=1}^{\kij} \frac{1}{k!} \dtheta[k][\onevec]
\quad\quad\textrm{and}\quad\quad
\errij[\kij](w) := \norm{\thetaij[\kij] - \thetaw[w]}_2.
\end{align*}
\end{defn}
%
%
If $\kij = 0$, we take the empty sum to be $0$ and $\thetaij[0] = \thetaone$.
As with any Taylor series, the size of the error, $\errij[\kij](w)$, will depend
on the magnitude of the next higher derivative, $\dtheta[\kij + 1][\wt]$ (when
it exists), evaluated at intermediate points $\wt: \norm{\wt - \onevec}_2 \le
\norm{w - \onevec}_2$.

The dependence of $\thetaw[w]$ on $w$ is implicit through the estimating
equation $G(\theta, w) = 0$, so the derivatives in the definition of
$\thetaij[\kij]$ cannot be evaluated directly.  One of our contributions, which
we describe in \secref{taylor_expansion}, is to develop a recursive procedure
to evaluate $\dtheta[][w]$ in terms of higher-order partial directional
derivatives of $G(\theta, w)$ that are easy to compute and analyze.  We
now introduce notation for such derivatives.

Let $\Gk(\theta, w)$ represent the $D^{k + 1}$-dimensional array of partial
derivatives $\partial^k G(\theta, w) / \partial\theta \ldots \partial\theta$.
Similarly, $\gnk(\theta)$ denotes the $k$-th partial derivative of the
individual term $g_n(\theta)$, and, by $\gk$, we denote the length-$N + 1$ array
of derivative arrays $\gk := \left(\gnk[k][0], \gnk[k][1], \ldots,
\gnk[k][N]\right)$.  It will be convenient to write $\Gk[0](\theta, w) =
G(\theta, w)$, with a corresponding definition for $\gnk[0](\theta)$ and
$\gk[0](\theta)$.
We denote $k$-th order partial directional derivatives of $G(\theta, w)$ with
respect to $\theta$ in the directions $v_1,\ldots,v_k$ (for $v_1, v_2, \ldots
\in \mathbb{R}^D$) by the shorthand
\begin{align*}
\Gk(\theta, \wt) v_1 \ldots v_k :=
    \sum_{d_1=1}^{D} \ldots \sum_{d_k=1}^{D}
    \partialat{{}^k G(\theta, w)}
              {\theta_{d_1} \ldots \partial\theta_{d_k}}
              {\theta, \wt}
        v_{1, d_1} \ldots v_{k, d_k}.
\end{align*}
To motivate the notation for the directional derivatives of $G(\theta, w)$, one
can think of $\Gk(\theta, w)$ as a map from a direction $v$ to a $k-1$-st order
derivative.  Then, for example, $\Gk[2](\theta, \wt)v_1 v_2$ is shorthand for
$\left(\left(\Gk[2](\theta, \wt)\right)(v_1)\right)(v_2)$ --- first, the
second-order derivative is evaluated at $(\theta, \wt)$, then applied to $v_1$,
then the resulting first-order derivative is applied to $v_2$, finally resulting
in a vector in $\mathbb{R}^D$.

Since $G(\theta, w)$ is linear in $w$, only the first partial derivative with
respect to $w$ is non-zero, and this partial derivative does not depend on the
weight vector at which it is evaluated.  Furthermore, like the derivatives of
$\thetaw$, we will only be considering directional derivatives of
$G(\theta, w)$ in the direction $\deltaw$.  We can thus avoid introducing
general notation for partial derivatives of $G(\theta, w)$ with respect to
$w$ and compactly write
\begin{align*}
\Gkw(\theta) &:=
    \partialat{\left(\Gk(\theta, w)\right)}{w}{\wt} \deltaw.
\end{align*}
Since $g_0(\theta)$ does not depend on $w$ and $G(\theta, w)$
is linear in $w$, note that
\begin{align*}
\Gkw(\theta)
&= \frac{1}{N}  \partialat{\left(\gnk[][0](\theta)\right)}{w_n}{\wt} +
    \frac{1}{N} \sum_{n=1}^{N}
        \partialat{\left(\gnk(\theta)\right) w_n }{w_n}{\wt} \deltaw_n \\
&= \frac{1}{N} \sum_{n=1}^{N} \gnk(\theta) \deltaw_n \\
&= \Gk(\theta, w) - \Gk(\theta, \onevec).
\end{align*}

The matrix of first partial derivatives $\Gk[1](\theta, w)$ will play a special
role, and so we give it a distinct symbol:
\begin{align*}
\Hw[\theta, w] := \Gk[1](\theta, w)
    \quad\quad\textrm{and}\quad\quad
\Hone := \Hw[\thetaone, \onevec].
\end{align*}
We use the letter ``H'' because $\Hw[\theta, w]$ is the Hessian matrix of the
objective when the estimating equation $G(\theta, w)$ arises from a smooth
optimization problem. Finally, we adopt the shorthand that $\Gk(\theta, w)$ and
$\Hw[\theta, w]$ without a $\theta$ argument are taken to be evaluated at
$\thetaw$:
\begin{align*}
    \Gk(\wt) := \Gk(\thetaw, \wt)\quad\textrm{and}\quad
    \Hw[\wt] := \Hw[\thetaw, \wt].
\end{align*}
We will use the norms $\norm{\cdot}_1$, $\norm{\cdot}_2$, and
$\norm{\cdot}_\infty$ to refer to the ordinary $L_p$ norm of the vectorized
version of whatever quantity appears inside the norm.  For example,
\begin{align*}
\norm{\Hw}_2 =
    \sqrt{\sum_{d_1 = 1}^D \sum_{d_2 = 1}^D \left(\Hw_{d_1 d_2}\right)^2},
\end{align*}
and
\begin{align*}
\norm{\gk[1]}_2 = \sqrt{\sum_{n=0}^N \sum_{d_1 = 1}^D \sum_{d_2 = 1}^D
(\gnk[1](\theta)_{d_1 d_2})^2}.
\end{align*}
We will make use of the well-known fact that $\norm{\cdot}_\infty \le
\norm{\cdot}_2 \le \norm{\cdot}_1$. Note that, by Jensen's inequality,
$\norm{\Gk(\theta)}_p \le \frac{1}{N} \norm{\gk(\theta)}_p$; see
\propref{array_norm_relation} in the appendix.
For matrices, we use $\norm{\cdot}_{op}$ to represent the operator norm,
i.e., the norm dual to the vector norm $\norm{\cdot}_2$.  Recall that,
if $A$ is a matrix, $\norm{A}_{op} \le \norm{A}_2$.


\section{Computing the Taylor series}\seclabel{taylor_expansion}
The estimator $\thetaw[w]$ is implicitly a function of $w$ through the
estimating equation $G(\theta, w) = 0$, so we cannot directly compute the
derivatives necessary to compute $\thetaij$.  In this section, we resolve this
difficulty and derive recursive expressions for $\dtheta$ in terms of the
partial derivatives $\Gk(\theta, \wt)$ and lower-order derivatives
$\dtheta[k']$ for $k' < k$. For the moment, we will assume that sufficient
regularity conditions hold that all our expressions are well-defined.  In
particular, we assume for the moment that $\thetaw[w]$ is a well-defined and
continuously differentiable function of $w$. We will precisely state sufficient
regularity conditions in \secref{assumptions} below.

Recall our shorthand notation  $\Gk(\wt) := \Gk(\thetaw, \wt)$.  In order to
derive expressions for $\dtheta$, we will repeatedly take
directional derivatives of $G(\wt)$ in the direction $\deltaw$. By
definition, $G(\wt) = 0$ for all $\wt$.  Differentiating this expression
using the chain rule gives
\begin{align}\eqlabel{dg1}
0  &=
\derivat{G(w)}{w}{\wt} \deltaw \nonumber \\
    &= \partialat{G(\theta, w)}{\theta}{\thetaw, \wt}
        \derivat{\thetaw[w]}{w}{\wt} \deltaw +
        \partialat{G(\theta, w)}{w}{\thetaw, \wt} \deltaw \nonumber \\
    &= \Gk[1](\wt) \dtheta[1] + \Gkw[0](\thetaw).
\end{align}
Recall that, by convention, we take $\Gk[0](\thetaw) = G(\thetaw)$.  This
notation will be convenient later when we develop recursive expressions for
higher-order derivatives. By evaluating \eqref{dg1} at $\wt = \onevec$ and
assuming that the matrix $\Gk[1](\onevec) = \Hone$ is invertible, we can solve
for $\dtheta[1][\onevec]$ and so derive the \emph{first-order infinitesimal
jackknife} (see \defref{ij}):
\begin{align}\eqlabel{dtheta1}
\dtheta[1][\onevec] = -\Hone^{-1} \Gkw[0](\thetaone)
    \quad\quad\textrm{and}\quad\quad
\thetaij[1] := \thetaone + \dtheta[1][\onevec].
\end{align}
Higher-order derivatives $\dtheta$ can be evaluated by continuing to
differentiate \eqref{dg1} term by term using the chain rule:
\begin{align}\eqlabel{dg2}
0 &=
    \derivat{{}^2 G(w)}{w dw}{\wt} \deltaw \deltaw \nonumber \\
&= \Gk[1](\wt) \dtheta[2] + {}
    & \textrm{(differentiating }\dtheta[1]\textrm{)} \nonumber \\
& \quad \Gk[2](\wt) \dtheta[1] \dtheta[1] +  {}
    & \textrm{(differentiating }\Gk[1](\theta, \wt)
        \textrm{ through }\theta\textrm{)} \nonumber \\
& \quad \Gkw[1](\wt) \dtheta[1] +  {}
    & \textrm{(differentiating }\Gk[1](\theta, w)
        \textrm{ through }w\textrm{)} \nonumber \\
& \quad \Gkw[1](\wt) \dtheta[1].
    & \textrm{(differentiating }\Gkw[0](\thetaw)
        \textrm{ through }\theta\textrm{)}
\end{align}
Observe that $\dtheta[2]$ occurs exactly once in \eqref{dg2}, again multiplied
by $\Gk[1](\wt)=\Hw$, so we can again evaluate at $\onevec$ and solve for
$\dtheta[2]$:
\begin{align*}
\dtheta[2] &= -\Hw^{-1}
    \left(\Gk[2](\wt) \dtheta[1] \dtheta[1] +
          2 \Gkw[1](\thetaw)\dtheta[1] \right).
\end{align*}

The formula for $\dtheta[2]$ is useful for at least two reasons.  First,
we can evaluate at $\onevec$ to form the
\emph{second-order infinitesimal jackknife} (as given in \defref{ij}).
Second, we can use the formula for $\dtheta[2]$ evaluated at $\wt$ in the set $\{
\wt : \norm{\wt - \onevec}_2 \le \norm{w - \onevec}_2 \}$ to control the error
of Taylor series used in the definition of the first-order infinitesimal
jackknife, $\thetaij[1]$. Under what conditions might we expect
$\norm{\dtheta[2]}_2$ (and so the error in $\thetaij[1]$) to be small uniformly
in $\wt$? Intuitively, we might hope that (a) the directional derivatives
$\dtheta[1]$ and $\Gkw[1](\thetaw)$ may be small, and (b) quantities like
$\Gk[1](\wt)$ and $\Gk[2](\wt)$ are close to their respective counterparts
$\Gk[1](\onevec)$ and $\Gk[2](\onevec)$.  Note that $\Gk[1](\onevec)$ and
$\Gk[2](\onevec)$ are simply sample averages of partial derivatives evaluated at
the original data and may reasonably be bounded by assumption. In
\secref{assumptions} we state precisely conditions under which this intuition
holds, and our proofs all proceed essentially in this way.

Analogously, we can continue the process and differentiate \eqref{dg2} to form a
\emph{third-order infinitesimal jackknife} and to control the error in
$\thetaij[2]$, and so on, for up to a $\kij$-order infinitesimal jackknife,
whose accuracy bound will require a $\kij + 1$-th order derivative.
In order to analyze all orders in this way, it will be useful to define a
general representation of terms arising from the use of the product rule as in
\eqref{dg1} and \eqref{dg2}.  To that end, we observe that every term that
arises from continued application of the product rule to differentiate
$G(\wt)$ will have the following form.

\begin{defn}\deflabel{deriv_terms}
%
Let $\kvec$ be a size $|\kvec|$ (possibly empty)
set of positive integers. Let $\omega\in\left\{ 0, 1 \right\}$.  Let $\wt$ be a
location at which the derivative is evaluated.  Then let $\dterm$ denote any
term of the form
\begin{align*}
\dterm &:= \begin{cases}
\Gk[|\kvec|](\wt) \prod_{k_i \in \kvec} \dtheta[k_i]
    & \textrm{if }\omega = 0 \\
\Gkw[|\kvec|](\thetaw) \prod_{k_i \in \kvec} \dtheta[k_i]
    & \textrm{if }\omega = 1.\\
\end{cases}
\end{align*}
\end{defn}
%
The set $\kvec$ describes the ways and number of times $G(\thetaw, \wt)$ has
been differentiated through the first argument $\thetaw$, and $\omega$ indicates
whether or not a directional derivative of $G(\thetaw, \wt)$ with respect to the
second argument $\wt$ has been taken. The final argument of $\dterm$ denotes the
weight at which the derivative is evaluated. Note that the value of $\dterm$ is
invariant to the order of the elements of $\kvec$, so we can effectively treat
$\kvec$ as an unordered set. Using \defref{deriv_terms}, we can rewrite
\eqref{dg1} and \eqref{dg2} as follows:
\begin{align}\eqlabel{dgw_term_form}
\derivat{G(\thetaw[w], w)}{w}{\wt} \deltaw &=
    \dterm[\{1\}][0][\wt] + \dterm[\emptyset][1][\wt] \\
\derivat{{}^2 G(\thetaw[w], w)}{w dw}{\wt} \deltaw \deltaw &=
    \dterm[\{2\}][0][\wt] + \dterm[\{1, 1\}][0][\wt] +
    2 \dterm[\{1\}][1][\wt]. \nonumber
\end{align}
Analogously, $\dtheta[1]$ and $\dtheta[2]$ can also be written in terms of
\defref{deriv_terms}.
\begin{align*}
\dtheta[1] &= -\Hw^{-1} \dterm[\emptyset][1][\wt]\\
\dtheta[2] &= -\Hw^{-1}\left(
    \dterm[\{1, 1\}][0][\wt] + 2 \dterm[\{1\}][1][\wt]\right).
\end{align*}
The advantages (which we prove below) of \defref{deriv_terms} are as follows.
\begin{enumerate}
\item Every higher-order derivative of $G(\wt)$
    with respect to $\wt$ can be expressed as a linear combination of
    terms of the form $\dterm$.
\item Under the regularity assumptions given below, we can explicitly control the norm
    $\norm{\dterm}_{2}$.
\end{enumerate}
Consequently, every directional derivative $\dtheta$ can expressed and
controlled using terms of the form $\dterm$, which in turn controls the error of
the residual in the Taylor series $\thetaij[k-1]$.  The next section formalizes
this intuition.

\section{Assumptions and Formal Results}\seclabel{results}
\subsection{Assumptions}\seclabel{assumptions}

We will state our assumptions and results for a fixed $\kij \ge 0$ and a single,
fixed $w$ at which we wish to consider $\thetaij[\kij][w]$.  In practice, we
will typically require these assumptions to hold for a set of weights, e.g., all
leave-$\kappa$-out weights, or for some fixed number of random bootstrap
weights.  Such results can be obtained by requiring or demonstrating that the
assumptions laid out below hold uniformly over the relevant set of weights, as
we elaborate on in \secref{complexity_assumptions}.

We will need to consider all weights in an open neighborhood of a line
connecting $\onevec$ and $w$.
%
%
\begin{defn}\deflabel{wdom}
%
For a fixed $w$ and some arbitrarily small scalar $\wepsilon > 0$, define
\begin{align*}
\wdom := \left\{ \textrm{All } \wt \textrm{ such that }
    \norm{\wt - \left((1 - t) \onevec + t w \right)}_2 <
    \wepsilon \textrm{ for }t \in [0, 1] \right\}.
\end{align*}
\end{defn}
%
%
To disambiguate from the fixed $w$, we will denote members of $\wdom$ by $\wt
\in \wdom$.  Note that \eqref{g_definition} may, in general, be satisfied by
multiple $\theta$ for any particular $\wt \in \wdom$.  For example, in
optimization problems, these multiple solutions would correspond to multiple
local optima or saddle points. Despite this ambiguity, for the moment let
$\thetaw$ denote \textit{any} solution of \eqref{g_definition} for $\wt \in
\wdom$.  We will show in \lemref{implicit_function_thm} that, under the
assumptions we will now articulate, $\thetaw$ is in fact uniquely defined.

We denote the domain of $\theta$ as $\allthetadom$. For the fixed $w$ and some
$\epsilon>0$, we will require $\allthetadom$ to contain an open neighborhood of
the convex hull of all possible solutions $\thetaw[\wt]$ for $\wt \in \wdom$.
Let $\mathrm{Conv}(\{\cdot\})$ denote the convex hull of its argument.
%
%
\begin{assu}\assulabel{thetadom} 
Assume that, for each $\wt \in \wdom$, there exists at least one
$\thetaw$ such that $G(\thetaw, \wt) = 0$. Denote this set of solutions
$\left\{ \thetaw: \wt \in \wdom \right\}$.  Fix some scalar $\epsilon > 0$.
Assume that $\allthetadom$ is an open set containing an $\epsilon$-enlargement
of the convex hull of the set of all such $\thetaw$, i.e.,
\begin{align*}
\allthetadom \supseteq
\left\{\textrm{All }\theta \textrm{ such that }
    \norm{\theta - \tilde{\theta}}_2 < \epsilon
    \textrm{ for some } \tilde{\theta} \in
        \mathrm{Conv}\left(\left\{ \thetaw: \wt \in \wdom \right\}\right)
 \right\}.
\end{align*}
\end{assu}
%
We emphasize again that, under \assuref{thetadom}, $\allthetadom$ need not
contain every $\theta$ that satisfies $G(\theta, \wt)=0$ for some $\wt \in
\wdom$. All that is required is that, for every $\wt \in \wdom$, there is at
least one solution to $G(\theta, \wt)=0$ in $\allthetadom$.  We will
show later in \lemref{implicit_function_thm} that our assumptions are sufficient
to establish that $\thetaw$ is in fact unique within $\allthetadom$.

A domain for $\theta$ that satisfies \assuref{thetadom} depends on $w$.  Because
our error bounds will be in terms of suprema over $\theta \in \allthetadom$, it
will be advantageous to choose $\allthetadom$ to be as small as possible while
still satisfying \assuref{thetadom}.


\begin{figure}
    \centering
\includegraphics[width=0.9\textwidth]{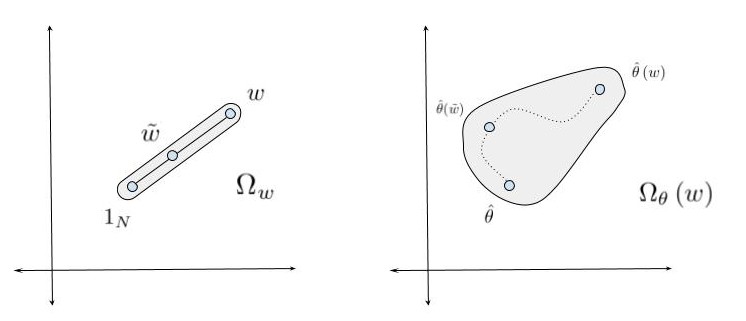}
\caption{A visualization of $\wdom$ and $\allthetadom$ for fixed $w$.}
\end{figure}

We now state mild assumptions on the objective evaluated at the original
weight vector $\onevec$, $G(\theta, \onevec)$.

\begin{assu}\assulabel{diffable} 
For all $\theta \in \allthetadom$, and all $n=0,\ldots,N$, $g_n(\theta)$ is
$\kij + 1$ times continuously differentiable.
\end{assu}

\Assuref{diffable} amounts to assuming that $\Gk(\theta, \wt)$ exists for all $k
\le \kij + 1$, all $\theta \in \allthetadom$, and all $\wt \in \wdom$.

\begin{assu}\assulabel{convexity} 
For all $\theta \in \allthetadom$, $\Hw[\theta, \onevec]$ is strongly positive
definite.  Specifically, there exists a finite constant $\cop$ such that
\begin{align*}
\sup_{\theta \in \allthetadom}
    \norm{\Hw[\theta, \onevec]^{-1}}_{op} \le \cop.
\end{align*}
\end{assu}
%
If $G(\theta, \onevec)$ arises as the first-order condition for a minimization
problem, \assuref{convexity} amounts to assuming that the objective function is
strongly convex, at least in a neighborhood of the local optimum.

\begin{assu}\assulabel{bounded} 
For all $\theta \in \allthetadom$, and each $1 \le k \le \kij + 1$, there exist
finite constants $\mk$ such that
\begin{align*}
    \sup_{\theta \in \allthetadom} \norm{\Gk(\theta, \onevec)}_{2} \le \mk.
\end{align*}
\end{assu}
%
$\mk$ is denoted with an ``M'' because it bounds the $L_2$ norms of the sample
means $\frac{1}{N} \sum_{n=0}^N \gnk(\theta)$. Note that $\Gk(\theta, \onevec)$
contains $D^{k + 1}$ entries, and so, in general, we expect $\mk$ to be of order
$D^{\frac{k + 1}{2}}$.\footnote{This fact leads to poor dimension dependence in
our bounds, which can be traced to our repeated use of the Cauchy-Schwartz bound
in the proof of \lemref{taylor_theta_term_bounds} below. Refining
\lemref{taylor_theta_term_bounds} in more structured cases to achieve better
dimension dependence is an interesting avenue for future work.}

\gonlyassums{} state requirements on the objective function with the unit weights
$\onevec$.  We now turn to conditions on the weight vector, $w$.
%
\begin{assu}\assulabel{complexity} 
For every $0 \le k \le \kij + 1$,
\begin{align*}
\sup_{\wt \in \wdom} \sup_{\theta \in \allthetadom}
    \norm{\Gk(\theta, \wt) - \Gk(\theta, \onevec)}_2
    \le \deltak.
\end{align*}
\end{assu}
%
Sometimes it will be useful to keep track only of the dependence on an upper
bound of all the set complexities, so we make the following definition.
\begin{defn}\deflabel{deltamax}
Define
$\deltamax := \max\left\{ \deltak[0], \ldots, \deltak[\kij + 1]\right\}$.
\end{defn}
\assuref{complexity} is a measure of the set complexity of the derivatives of
$G(\theta, \onevec)$ \citep{giordano:2019:ij}.\footnote{The reader may wonder
why we do not define \assuref{complexity} more compactly in terms of
$\Gkw(\theta)$. The reason is that the identity between $\Gk(\theta, w) -
\Gk(\theta, \onevec)$ and $\Gkw(\theta)$ holds only when $G(\theta, w)$ is
linear in $w$. \assuref{complexity} is stated in a form designed to illuminate
to its role in the proofs, which essentially depend on the smoothness of
$\Gk(\theta, w)$ in $w$ rather than properties the partial derivative of
$\Gk(\theta, w)$ with respect to $w$ evaluated at $\onevec$.}
Note that \assurangeref{thetadom}{bounded} depend on the ``regularization term''
$\gnk[][0](\theta)$, but \assuref{complexity} does not because
$\gnk[][0](\theta)$ does not depend on $w$ and so cancels in the difference.


When considering the behavior of $\thetaij$ for large $N$, our theory is
motivated by the presumption that $\cop$ and $\mk$ will remain finite but
nonzero, and that $\deltak$ will approach zero as $N$ gets large. We justify
this intuition and discuss the control of $\deltak$ in detail in
\secref{complexity_assumptions}.

\Assuref{convexity} requires $\Hone$ to be strongly positive definite at the
vector of weights $\onevec$, but we additionally require $\Hw$ to be
strongly positive definite for all $\wt \in \wdom$. To this end we state a
final, more restrictive condition that combines the above assumptions and
\assuref{complexity}.
%
\begin{cond}\condlabel{theta_small} 
Define the constant $\cset := \cop \deltak[1] + \cop^2 \lh \deltak[0]$.
Fix some $0 < \deltabound < 1$, and assume that $\cset \le \deltabound.$
\end{cond}
%
\Condref{theta_small} is not intuitive; it is simply a technical condition that
gives us control over the smallest eigenvalues of $\Hw$, as we show in
\lemref{weighted_hessian_inverse} below. Note that, to fulfill
\condref{theta_small}, it suffices to have $\deltamax \le \deltabound \left(\cop +
\lh\cop^2 \right)^{-1}$.~\footnote{We can compare \condref{theta_small} with the
related bound from \citet{giordano:2019:ij}. Both scale by $\cop^{-2}$ and
$\lh^{-1}$.  The dimension dependence $D$ has been removed by using the $L_2$
norm in \lemref{operator_norm_continuity}.}
By design, \condref{theta_small} gives the following lemma.
%
\begin{lem}\lemlabel{weighted_hessian_inverse}
\seeproof{weighted_hessian_inverse}
Define $\copw := \frac{1}{1-\deltabound} \cop$.  Under
\allassums{},
\begin{align*}
    \sup_{\wt \in \wdom} \norm{\Hw[\wt]^{-1}}_{op}  \le \copw.
\end{align*}
\end{lem}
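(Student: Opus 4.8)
The plan is to fix an arbitrary $\wt\in\wdom$, compare the weighted Hessian $\Hw[\wt]=\Hw[\thetaw,\wt]$ to the \emph{unweighted} Hessian evaluated at the \emph{same} parameter value, $\Hw[\thetaw,\onevec]$, and then apply a standard Neumann-series perturbation bound. The reason for holding the $\theta$-argument fixed at $\thetaw$ is that the discrepancy between the two matrices is then exactly the effect of changing the weight vector from $\onevec$ to $\wt$, which is precisely what \assuref{complexity} controls, and we will only need it at order $k=1$; meanwhile \assuref{convexity}, being uniform over $\theta\in\allthetadom$, already bounds the inverse of the unweighted Hessian at $\thetaw$ with no extra work.

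Carrying this out: fix $\wt\in\wdom$ and let $\thetaw$ be a solution of \eqref{g_definition} at $\wt$; such a solution exists and lies in $\allthetadom$ by \assuref{thetadom}, and by \assuref{diffable} the matrix $\Gk[1]$ is well-defined throughout $\allthetadom$. Set $A:=\Hw[\thetaw,\onevec]$ and $B:=\Hw[\wt]=\Hw[\thetaw,\wt]$. By \assuref{convexity}, $A$ is invertible with $\norm{A^{-1}}_{op}\le\cop$. Since $B-A=\Gk[1](\thetaw,\wt)-\Gk[1](\thetaw,\onevec)$ and $\norm{\cdot}_{op}\le\norm{\cdot}_2$, \assuref{complexity} with $k=1$ gives $\norm{B-A}_{op}\le\deltak[1]$, whence $\norm{A^{-1}(B-A)}_{op}\le\cop\deltak[1]$. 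Because $\cop,\lh,\deltak[0]\ge 0$ we have $\cop\deltak[1]\le\cset$, so \condref{theta_small} yields $\norm{A^{-1}(B-A)}_{op}\le\deltabound<1$. Hence $I+A^{-1}(B-A)$ is invertible with $\norm{(I+A^{-1}(B-A))^{-1}}_{op}\le(1-\cop\deltak[1])^{-1}$ by the Neumann series, and writing $B=A(I+A^{-1}(B-A))$ gives
\[
\norm{\Hw[\wt]^{-1}}_{op}=\norm{(I+A^{-1}(B-A))^{-1}A^{-1}}_{op}\le\frac{\norm{A^{-1}}_{op}}{1-\cop\deltak[1]}\le\frac{\cop}{1-\deltabound}=\copw .
\]
Since $\wt\in\wdom$ was arbitrary, taking the supremum over $\wdom$ gives the claim.

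I do not expect a genuine obstacle here; the one place that needs care is the choice of comparison matrix. Comparing $\Hw[\wt]$ directly to $\Hone=\Hw[\thetaone,\onevec]$ would force us to also control $\norm{\Hw[\thetaw,\onevec]-\Hone}$ and hence $\norm{\thetaw-\thetaone}$, pulling in higher-order bounds unnecessarily; evaluating both matrices at the common argument $\thetaw$ avoids this. Relatedly, this lemma uses only the consequence $\cop\deltak[1]\le\deltabound$ of \condref{theta_small}; the additional summand $\cop^2\lh\deltak[0]$ appearing in $\cset$ is slack for the present argument (it is needed by later results), so there is no need to track it here.
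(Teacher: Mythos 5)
Your proof is correct, but it takes a genuinely different route from the paper's. The paper compares $\Hw[\wt]=\Hw[\thetaw,\wt]$ to the Hessian at the base point, $\Hone=\Hw[\thetaone,\onevec]$, which forces it to control two perturbations: the change in weights (bounded by $\deltak[1]$ via \assuref{complexity}) \emph{and} the change in the $\theta$-argument from $\thetaw$ to $\thetaone$ (bounded by $\lh\norm{\thetaw-\thetaone}_2\le\lh\cop\deltak[0]$ via \lemref{lipschitz} and \corref{theta_difference_bound}). That is exactly why $\cset$ has the two-term form $\cop\deltak[1]+\cop^2\lh\deltak[0]$; the paper then invokes its perturbation lemma (\lemref{operator_norm_continuity}, which is the same Neumann-series bound you use) with $A=\Hone$. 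You instead compare to $\Hw[\thetaw,\onevec]$, exploiting the fact that \assuref{convexity} is uniform over $\theta\in\allthetadom$ and that $\thetaw\in\allthetadom$ by \assuref{thetadom}, so only the weight perturbation remains and only the first summand of $\cset$ is ever needed. Both arguments land on the same constant $\copw=\frac{1}{1-\deltabound}\cop$, but yours is more economical: it dispenses with \lemref{lipschitz} and \corref{theta_difference_bound}, and it shows that for this lemma \condref{theta_small} could be weakened to $\cop\deltak[1]\le\deltabound$. The trade-off is largely cosmetic --- the paper's route establishes along the way that $\Hw[\wt]$ is close to the single fixed matrix $\Hone$, a fact with independent computational relevance (all orders of the expansion are solved against $\Hone^{-1}$), whereas your comparison matrix varies with $\wt$; but for the stated conclusion your argument is complete and slightly sharper.
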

%
As $\deltabound \rightarrow 1$, \lemref{weighted_hessian_inverse} becomes
vacuous, and as $\deltabound \rightarrow 0$, $\copw \rightarrow \cop$. We do not
specify the value of $\deltabound$, but, for simplicity, we take $\deltabound$
to be fixed.\footnote{Note that \citet{giordano:2019:ij} took $\deltabound =
\frac{1}{2}$.}
A key consequence of \lemref{weighted_hessian_inverse} is the existence of
$\thetaw \in \allthetadom$ as a smooth function of $\wt \in \wdom$.

\begin{lem}\lemlabel{implicit_function_thm}
\seeproof{implicit_function_thm}
Under \allassums{}, for any $\wt \in \wdom$, any solution $\thetaw \in
\allthetadom$ of $G(\theta, \wt) = 0$ is a  $\kij + 1$ times continuously
differentiable function of $\wt$.
\end{lem}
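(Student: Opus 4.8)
The plan is to apply the implicit function theorem, using \lemref{weighted_hessian_inverse} to verify its hypotheses at every relevant weight, and then patch the resulting local solutions together into a single globally-defined smooth function on $\wdom$. First I would fix an arbitrary $\wt_0 \in \wdom$ and an arbitrary solution $\theta_0 := \thetaw[\wt_0] \in \allthetadom$ of $G(\theta_0, \wt_0) = 0$. By \assuref{diffable}, $G(\theta, w)$ is $\kij + 1$ times continuously differentiable in $\theta$ jointly, and since $G$ is affine (hence $C^\infty$) in $w$, it is $\kij + 1$ times continuously differentiable in $(\theta, w)$ on $\allthetadom \times \wdom$. By \lemref{weighted_hessian_inverse}, the partial Jacobian $\partial G / \partial\theta|_{(\theta_0, \wt_0)} = \Hw[\theta_0, \wt_0]$ is invertible (indeed with operator norm of its inverse bounded by $\copw$). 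The implicit function theorem then supplies open neighborhoods $U \ni \wt_0$ and $V \ni \theta_0$ and a unique $\kij + 1$ times continuously differentiable map $\phi_{\wt_0}: U \to V$ with $G(\phi_{\wt_0}(\wt), \wt) = 0$ and $\phi_{\wt_0}(\wt_0) = \theta_0$.

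The main obstacle is to show that these local branches agree with the branch that lands in $\allthetadom$, so that $\thetaw$ is \emph{globally} well-defined on $\wdom$ and the local smoothness transfers to it. Here I would invoke the uniqueness half of \lemref{implicit_function_thm}'s own conclusion --- more precisely, I would first establish uniqueness and then smoothness, since the lemma asserts ``any solution $\thetaw \in \allthetadom$ is a smooth function of $\wt$'', which presupposes there is essentially one. To prove uniqueness: suppose $\theta_a, \theta_b \in \allthetadom$ both satisfy $G(\cdot, \wt) = 0$ for the same $\wt \in \wdom$. Since $\allthetadom$ contains an $\epsilon$-enlargement of the convex hull of all solutions (\assuref{thetadom}), the segment $\theta(s) = (1-s)\theta_a + s\theta_b$ lies in $\allthetadom$ for $s \in [0,1]$. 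Writing $0 = G(\theta_b, \wt) - G(\theta_a, \wt) = \left(\int_0^1 \Hw[\theta(s), \wt]\, ds\right)(\theta_b - \theta_a)$, it suffices to show the averaged Hessian is invertible. For that I would combine \assuref{convexity} (control of $\Hw[\theta, \onevec]^{-1}$) with \assuref{complexity} (which bounds $\norm{\Hw[\theta, \wt] - \Hw[\theta, \onevec]}_2 \le \deltak[1]$, hence the operator-norm difference) via a Neumann-series / perturbation argument, exactly the mechanism that produces \lemref{weighted_hessian_inverse}; the smallness is guaranteed by \condref{theta_small}. This forces $\theta_a = \theta_b$.

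With uniqueness in hand, global smoothness follows by a standard continuation/gluing argument. Define $\thetaw$ on $\wdom$ to be the unique solution in $\allthetadom$. For each $\wt_0 \in \wdom$, the local IFT map $\phi_{\wt_0}$ agrees with $\thetaw$ on a neighborhood of $\wt_0$: on $U$, both $\phi_{\wt_0}(\wt)$ and $\thetaw[\wt]$ solve $G(\cdot, \wt) = 0$, and $\phi_{\wt_0}(\wt) \in V \subseteq \allthetadom$ while $\thetaw[\wt] \in \allthetadom$ is the unique such solution, so they coincide. (One should shrink $U$, if necessary, to ensure $\phi_{\wt_0}(U) \subseteq \allthetadom$, which is possible by continuity of $\phi_{\wt_0}$ together with openness of $\allthetadom$.) Hence $\thetaw$ restricted to a neighborhood of every point equals a $\kij + 1$ times continuously differentiable map, so $\thetaw$ is itself $\kij + 1$ times continuously differentiable on $\wdom$. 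A minor technical point to handle cleanly is that $\wdom$ need not be convex, but since it is an open tube around the segment $[\onevec, w]$ it is connected and open, which is all the gluing argument requires; and the $\wepsilon$-enlargement in \defref{wdom} guarantees every $\wt \in \wdom$ is interior, so the IFT genuinely yields a two-sided neighborhood $U$.
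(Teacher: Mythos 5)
Your proof is correct, and it takes a genuinely different route from the paper's. The paper does not argue via the local implicit function theorem plus gluing; instead it applies a \emph{global} inverse function theorem (Theorem 2 of Sandberg, 1980) to the augmented map $\gimp(w,\theta) = (w, G(\theta,w))$, discharging its hypotheses by checking that the Jacobian is nonsingular (via \lemref{weighted_hessian_inverse}) and that $\gimp$ is a proper map, the latter via the integral mean-value identity $G(\theta_z,\wt_0) - G(\theta_0,\wt_0) = \left(\int_0^1 H(t\theta_z + (1-t)\theta_0, \wt_0)\,dt\right)(\theta_z - \theta_0)$ together with \lemref{hessian_integral_inverse}. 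Your decomposition --- local IFT at each $(\thetaw[\wt_0],\wt_0)$, then a separate uniqueness argument, then gluing over the open connected set $\wdom$ --- reaches the same conclusion, and in fact your uniqueness step is essentially the $z = \zerovec$ case of the paper's properness bound: the same integral identity and the same invertibility of the averaged Hessian, obtained from \assuref{convexity}, \assuref{complexity}, \condref{theta_small}, and \lemref{operator_norm_continuity}. What your route buys is self-containedness: it avoids the citation to a global inverse theorem and makes explicit the uniqueness of $\thetaw$ in $\allthetadom$, which the paper promises in the discussion following \assuref{thetadom} but leaves implicit in the cited theorem's conclusion. What the paper's route buys is that existence of the inverse, its uniqueness, and its $\kij+1$-fold differentiability all come packaged in a single cited result rather than a hand-rolled continuation argument. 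One caveat you inherit rather than introduce: both arguments require the segment joining two points of $\allthetadom$ to remain in $\allthetadom$ (yours for the segment between $\theta_a$ and $\theta_b$; the paper's inside \lemref{hessian_integral_inverse}, whose proof annotates a step with ``$\allthetadom$ is convex''), whereas \assuref{thetadom} only guarantees convexity of the hull of the designated solutions; this is a shared imprecision, not a new gap in your argument.
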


\Lemref{implicit_function_thm} justifies our use of the chain rule in
\secref{taylor_expansion} above, and, indeed, our use of the notation
$\thetaw$.

\subsection{Controlling the error}\label{sec:error}
We now put the above pieces together to control the error $\errij[\kij]$ of the
Taylor series approximation $\thetaij[\kij]$.
We begin by proving the two assertions made at the end of
\secref{taylor_expansion} about the utility of $\dterm$ in \defref{deriv_terms}.

\begin{lem}\lemlabel{higher_order_general_terms}
\seeproof{higher_order_general_terms}
Let \allassums{} hold.  Let $k$ be any integer in $1,\ldots,\kij + 1$.
Then there exists a set of tuples $\dtermargs[k] = \left\{ (a_i, \kvec_i,
\omega_i), i \in 1,\ldots, |\dtermargs[k]| \right\}$, where $a_{i}$ are fixed
scalars and $\kvec_i$ and $\omega_{i}$ are defined as in \defref{deriv_terms},
such that
\begin{align*}
\dtheta
    =\Hw^{-1}\left(
        \sum_{(a_i, \kvec_i, \omega_i) \in \dtermargs[k]}
        a_i \dterm[\kvec_i][\omega_i][\wt] \right).
\end{align*}
Furthermore, each of the tuples $(a_i, \kvec_i, \omega_i)$ satisfies the following
properties:
\begin{align}
\max_{k' \in \kvec_{i}}k' & < k \quad\textrm{and} \eqlabel{dtheta_term_properties} \\
\omega_i + \sum_{k'\in\kvec_{i}}k' &= k. \eqlabel{dtheta_term_total_order}
\end{align}
\end{lem}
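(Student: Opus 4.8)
The plan is to prove the statement by induction on $k$, following exactly the term-by-term differentiation process sketched for $\dtheta[1]$ and $\dtheta[2]$ in \secref{taylor_expansion}. The base case $k=1$ is already established: \eqref{dtheta1} (rewritten just before the list of advantages of \defref{deriv_terms}) gives $\dtheta[1] = -\Hw^{-1}\dterm[\emptyset][1][\wt]$, so we take $\dtermargs[1] = \{(-1, \emptyset, 1)\}$, and the single tuple satisfies \eqref{dtheta_term_total_order} since $\omega = 1$ and the $\kvec$-sum is empty, and \eqref{dtheta_term_properties} vacuously. The inductive step proceeds by taking one more directional derivative in the direction $\deltaw$ of the identity $0 = \derivat{{}^k G(\thetaw[w],w)}{w\ldots dw}{\wt}\deltaw\ldots\deltaw$, which by the induction hypothesis (applied at a generic $\wt$, valid since \lemref{implicit_function_thm} gives the requisite smoothness) can be written as $0 = \Hw\,\dtheta[k] + \sum_i a_i \dterm[\kvec_i][\omega_i][\wt]$, i.e. as $\Hw \dtheta[k]$ plus a linear combination of $\tau$-terms of total order $k$.

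The key computation is then: differentiate this identity once more in the direction $\deltaw$. On the left we get $0$. On the right, the term $\Hw\dtheta[k]$ produces $\Hw\,\dtheta[k+1]$ (the piece we want to isolate) plus $\left(\delta_w \Hw\right)\dtheta[k]$; since $\Hw = \Gk[1](\thetaw,\wt)$, the product rule gives $\delta_w\Hw = \Gk[2](\wt)\dtheta[1] + \Gkw[1](\thetaw)$, so this contributes terms $\dterm[\{2,k\}][0][\wt]$ — wait, more precisely $\Gk[2](\wt)\dtheta[1]\dtheta[k]$, which is $\dterm[\{1,k\}][0]$ — plus $\Gkw[1](\thetaw)\dtheta[k] = \dterm[\{k\}][1]$. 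For a general $\tau$-term $\dterm[\kvec][\omega][\wt] = \Gk[|\kvec|]^{(\omega)}(\cdot)\prod_{k_i\in\kvec}\dtheta[k_i]$, the product rule distributes over (i) the leading derivative factor of $G$, whose $\theta$-derivative raises $|\kvec|$ by one and appends a new index $1$ to $\kvec$ (because $\delta_w\thetaw = \dtheta[1]$), while its $w$-derivative flips $\omega$ from $0$ to $1$ and leaves $\kvec$ unchanged (and is zero if $\omega$ already equals $1$, using linearity of $G$ in $w$), and (ii) each factor $\dtheta[k_i]$, whose derivative replaces $k_i$ by $k_i+1$. In every case the new term is again of the form in \defref{deriv_terms}, and one checks the bookkeeping: the quantity $\omega + \sum_{k'\in\kvec}k'$ increases by exactly one under each of these operations (appending a $1$: the $\kvec$-sum goes up by $1$; flipping $\omega$: $\omega$ goes up by $1$; incrementing some $k_i$: the sum goes up by $1$), so every resulting term has total order $k+1$, giving \eqref{dtheta_term_total_order}. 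Similarly $\max_{k'\in\kvec}k'$ can reach at most $k$ — the only factor that could be incremented to $k+1$ would have to already be $k$, but no $\dtheta[k]$ appears as a $\kvec$-factor in the terms of order $k$ (there $\dtheta[k]$ appears only outside the $\tau$-structure, multiplied by $\Hw$); all $\kvec$-factors in order-$k$ terms have index $<k$, so after incrementing they are $\le k < k+1$, giving \eqref{dtheta_term_properties}. Finally, $\dtheta[k+1]$ appears exactly once (from differentiating the $\Hw\dtheta[k]$ term through $\dtheta[k]$) and is the only term involving a $(k+1)$-st derivative of $\thetaw$ with no accompanying $G$-factor to its left other than $\Hw$; by \lemref{weighted_hessian_inverse}, $\Hw$ is invertible on $\wdom$, so we solve $\dtheta[k+1] = -\Hw^{-1}\sum_i a_i' \dterm[\kvec_i'][\omega_i'][\wt]$, defining $\dtermargs[k+1]$ as this collection of (sign-adjusted) tuples.

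I would organize the write-up so that the combinatorial core is a single lemma on how one "differentiation step" acts on a term of \defref{deriv_terms} — namely that $\delta_w\dterm[\kvec][\omega][\wt]$ is a finite linear combination of terms $\dterm[\kvec'][\omega'][\wt]$ each with total order one higher and each $\kvec'$-index still $\le$ (previous max, or $1$) — and then the theorem is immediate by induction. The main obstacle is purely one of careful accounting rather than depth: one must be scrupulous that the $\dtheta[k+1]$ term is genuinely isolated (i.e. that no hidden second occurrence of a $(k+1)$-st directional derivative of $\thetaw$ arises, which is guaranteed because such a derivative can only be produced by differentiating $\dtheta[k]$, and $\dtheta[k]$ occurs in the order-$k$ expression only in the single term $\Hw\dtheta[k]$), and that the "$\max k' < k$" invariant is not accidentally violated — this is where stating and reusing the inductive invariant precisely, rather than re-deriving it each time, keeps the argument clean. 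Smoothness needed to justify all the differentiations at generic $\wt\in\wdom$ is supplied by \lemref{implicit_function_thm} together with \assuref{diffable}, and invertibility of $\Hw$ by \lemref{weighted_hessian_inverse}; no further ingredients are required.
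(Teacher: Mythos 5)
Your proposal is correct and follows essentially the same route as the paper's proof: induction on $k$ with the inductive hypothesis that the $k$-th directional derivative of $G$ equals $\Hw\,\dtheta[k]$ (i.e.\ $\dterm[\{k\}][0][\wt]$) plus a linear combination of $\tau$-terms satisfying \eqref{dtheta_term_properties} and \eqref{dtheta_term_total_order}, the same three-way product-rule case analysis (increment a $k_i$, append a $1$ to $\kvec$, or flip $\omega$ from $0$ to $1$), the same bookkeeping showing total order rises by exactly one and the max index stays below $k+1$, and the same final step isolating $\dtheta[k+1]$ via the invertibility of $\Hw$ from \lemref{weighted_hessian_inverse}. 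No gaps.
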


\Eqref{dtheta_term_properties} of \lemref{higher_order_general_terms} states
that $\dtheta[k]$ can be computed using only terms involving $\dtheta[k']$ for
$k' < k$, and \Eqref{dtheta_term_total_order} states that each such term arises
from having taken $k$ directional derivatives of $G(\thetaw, \wt)$.
\lemref{higher_order_general_terms} thus formalizes the idea that the process
used in \secref{taylor_expansion} to compute the first two derivatives
$\dtheta[1]$ and $\dtheta[2]$ can be continued to arbitrarily high orders.
Additionally, \Lemref{higher_order_general_terms} asserts the computationally
useful fact that $\dtheta[][\onevec]$ of all orders can be computed with a
single matrix inverse, $\Hone^{-1}$.

\begin{lem}\lemlabel{taylor_theta_term_bounds}
\seeproof{taylor_theta_term_bounds}
Under \allassums{}, for all $\wt \in \wdom$,
\begin{align*}
\norm{\dterm[\kvec][\omega][\wt]}_2
    \le (\deltak[|\kvec|] + (1 - \omega) \mk[|\kvec|])
        \prod_{k \in \kvec} \norm{\dtheta[k]}_2.
\end{align*}
If $\kvec = \emptyset$, the empty product is taken to be $1$.
\end{lem}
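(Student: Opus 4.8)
The plan is to exploit the factored form of $\dterm[\kvec][\omega][\wt]$ in \defref{deriv_terms}: in both cases $\omega = 0$ and $\omega = 1$ it is a single derivative array of $G$, contracted through its $\theta$-slots against the $|\kvec|$ directional-derivative vectors $\{\dtheta[k] : k \in \kvec\}$. So the bound splits into (i) a submultiplicative inequality controlling a contraction by the product of the $L_2$ norm of the array and the $L_2$ norms of the contracted vectors, and (ii) a bound on the $L_2$ norm of the array itself, which is exactly what \assuref{bounded} and \assuref{complexity} supply. Before either step I would record the housekeeping facts that make the assumptions applicable: $\thetaw[\wt] \in \allthetadom$ for every $\wt \in \wdom$ by \assuref{thetadom}, the requisite directional derivatives exist by \assuref{diffable} together with \lemref{implicit_function_thm}, and $w \in \wdom$ and $\onevec \in \wdom$ (take $t = 1$ and $t = 0$ in \defref{wdom}), so that \assuref{bounded} and \assuref{complexity} may be invoked with whichever second weight argument we need.

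For step (i), the key (and only genuinely computational) fact is that if $A$ is a $D^{m+1}$-dimensional array and $v_1, \ldots, v_m \in \mathbb{R}^D$, then $\norm{A v_1 \cdots v_m}_2 \le \norm{A}_2 \prod_{i=1}^m \norm{v_i}_2$, which follows from a single application of Cauchy--Schwarz over the contracted multi-index: writing $(A v_1 \cdots v_m)_{d_0} = \sum_{d_1, \ldots, d_m} A_{d_0 d_1 \cdots d_m} \prod_i v_{i, d_i}$,
\[
\norm{A v_1 \cdots v_m}_2^2
\;\le\; \sum_{d_0} \Big( \sum_{d_1, \ldots, d_m} A_{d_0 d_1 \cdots d_m}^2 \Big) \Big( \sum_{d_1, \ldots, d_m} \prod_i v_{i, d_i}^2 \Big)
\;=\; \norm{A}_2^2 \prod_{i=1}^m \norm{v_i}_2^2 .
\]
I would apply this with $m = |\kvec|$ and the $v_i$ ranging over the multiset $\kvec$ with $v_i = \dtheta[k_i]$ (repeated entries of $\kvec$ simply contributing repeated factors, matching the multiset product $\prod_{k \in \kvec}$ in the statement; when $\kvec = \emptyset$ the product is $1$), which reduces the claim to showing $\norm{A}_2 \le \deltak[|\kvec|] + (1 - \omega) \mk[|\kvec|]$, where $A = \Gk[|\kvec|](\thetaw, \wt)$ when $\omega = 0$ and $A = \Gkw[|\kvec|](\thetaw)$ when $\omega = 1$.

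For step (ii): when $\omega = 1$, linearity of $G(\theta, w)$ in $w$ gives the identity $\Gkw[|\kvec|](\theta) = \Gk[|\kvec|](\theta, w) - \Gk[|\kvec|](\theta, \onevec)$ recorded in \secref{notation}, so evaluating at $\theta = \thetaw[\wt] \in \allthetadom$ and applying \assuref{complexity} (second argument $w$) bounds $\norm{A}_2$ by $\deltak[|\kvec|]$, which is the claim since $1 - \omega = 0$. When $\omega = 0$, I would split by the triangle inequality, $\norm{\Gk[|\kvec|](\thetaw, \wt)}_2 \le \norm{\Gk[|\kvec|](\thetaw, \wt) - \Gk[|\kvec|](\thetaw, \onevec)}_2 + \norm{\Gk[|\kvec|](\thetaw, \onevec)}_2$, bounding the first term by $\deltak[|\kvec|]$ via \assuref{complexity} and the second by $\mk[|\kvec|]$ via \assuref{bounded}; the degenerate subcase $|\kvec| = 0$, $\omega = 0$ needs no assumption, since then $A = G(\thetaw[\wt], \wt) = 0$ by definition of $\thetaw[\wt]$. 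Combining (i) and (ii) yields the stated bound. I do not expect a substantive obstacle: the lemma is essentially a packaging of Cauchy--Schwarz with the defining assumptions, and the only points demanding care are the multiset bookkeeping for repeated entries of $\kvec$ and the verification — done up front — that every point at which a derivative array of $G$ is evaluated actually lies in the domains over which \assuref{bounded} and \assuref{complexity} take their suprema.
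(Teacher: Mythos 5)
Your proposal is correct and follows essentially the same route as the paper: bound the contraction by $\norm{A}_2\prod_k\norm{\dtheta[k]}_2$ via Cauchy--Schwarz, then bound $\norm{A}_2$ by the triangle inequality together with \assuref{complexity} and \assuref{bounded} (for $\omega=0$) or by \assuref{complexity} alone (for $\omega=1$). The only cosmetic difference is that you apply Cauchy--Schwarz once over the full contracted multi-index, whereas the paper peels off one vector at a time inductively, bouncing between the $L_1$ and $L_2$ norms; your up-front domain checks and the observation that the $\kvec=\emptyset$, $\omega=0$ term vanishes are welcome extras, not deviations.
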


With these results in hand, we can start to build the inductive ladder that
will bound the error of $\thetaij$ for all orders up to $\kij - 1$.
Our main result will proceed by induction on the order of derivatives
of $d^{k}\hat{\theta}\left(\wt\right)$. We now take the first step
in the induction.
%
%
\begin{cor}\corlabel{dtheta_bounded}
%
Under \allassums{}, 
\begin{align*}
\sup_{\wt \in \wdom} \norm{\dtheta[1]}_{2} & \le \copw \deltak[0].
\end{align*}
\begin{proof}\prooflabel{dtheta_bounded}
By \eqref{dtheta1}, $\dtheta[1] = -\Hw^{-1} \dterm[\emptyset][1][\wt]$, so
\begin{align*}
\norm{\dtheta[1]}_{2} & \le \copw
    \norm{\dterm[\emptyset][1][\wt]}_{2}
    &\textrm{(\lemref{weighted_hessian_inverse})} \\
& \le \copw \deltak[0].
    &\textrm{(\lemref{taylor_theta_term_bounds})}.
\end{align*}
\end{proof}
\end{cor}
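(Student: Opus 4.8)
The plan is to obtain a closed-form expression for $\dtheta[1]$ valid at every $\wt \in \wdom$ and then bound its two factors uniformly. First I would invoke \lemref{implicit_function_thm}, which under \allassums{} makes $\thetaw$ a well-defined, continuously differentiable function of $\wt$ on all of $\wdom$; this is exactly what licenses the chain-rule computation of \secref{taylor_expansion} at a general base point $\wt$ rather than only at $\onevec$. Differentiating $G(\thetaw, \wt) = 0$ once in the direction $\deltaw$ reproduces \eqref{dg1} with $\onevec$ replaced by $\wt$, and since $\dtheta[1]$ appears there multiplied only by $\Hw = \Gk[1](\wt)$ --- invertible by \lemref{weighted_hessian_inverse} --- I can solve for it, obtaining the term-form identity $\dtheta[1] = -\Hw^{-1}\dterm[\emptyset][1][\wt]$ already recorded just after \defref{deriv_terms}.

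Next I would take $\norm{\cdot}_2$ of both sides and use submultiplicativity of the operator norm: $\norm{\dtheta[1]}_2 \le \norm{\Hw^{-1}}_{op}\,\norm{\dterm[\emptyset][1][\wt]}_2$. The first factor is at most $\copw$, uniformly over $\wt \in \wdom$, by \lemref{weighted_hessian_inverse}. For the second factor I would apply \lemref{taylor_theta_term_bounds} with $\kvec = \emptyset$ and $\omega = 1$: its general bound $(\deltak[|\kvec|] + (1 - \omega)\mk[|\kvec|])\prod_{k \in \kvec}\norm{\dtheta[k]}_2$ collapses, with an empty product and $\omega = 1$, to exactly $\deltak[0]$. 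Multiplying the two estimates gives $\norm{\dtheta[1]}_2 \le \copw \deltak[0]$ for each $\wt \in \wdom$, and taking the supremum over $\wdom$ yields the claim.

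I do not expect any genuine obstacle here; the substantive work has already been done in \lemref{weighted_hessian_inverse} (uniform control of $\norm{\Hw^{-1}}_{op}$) and \lemref{taylor_theta_term_bounds} (control of $\norm{\dterm}_2$), and this corollary merely specializes them. The one point needing a moment's care is bookkeeping of the base point: \eqref{dtheta1} was stated at $\onevec$, so one must note that the identical derivation goes through verbatim at an arbitrary $\wt \in \wdom$ precisely because \lemref{weighted_hessian_inverse} supplies the invertibility of $\Hw$ on the whole neighborhood. This estimate then serves as the $k = 1$ base case for the induction on derivative order that will ultimately bound $\norm{\dtheta[k]}_2$, and hence $\errij[\kij]$, for all higher $k$.
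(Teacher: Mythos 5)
Your proof is correct and follows essentially the same route as the paper's: solve $G(\thetaw, \wt) = 0$ for $\dtheta[1] = -\Hw^{-1}\dterm[\emptyset][1][\wt]$, bound the inverse Hessian uniformly by $\copw$ via \lemref{weighted_hessian_inverse}, and apply \lemref{taylor_theta_term_bounds} with $\kvec = \emptyset$, $\omega = 1$ to obtain $\deltak[0]$. Your added care about the base point being a general $\wt \in \wdom$ (justified by \lemref{implicit_function_thm}) is a sound clarification of what the paper leaves implicit, but it does not change the argument.
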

%
Note that we were able to apply \lemref{taylor_theta_term_bounds} with $\kvec =
\emptyset$, and so did not require bounds on any other derivatives to begin the
induction.

Recall that we expect our theory to apply most usefully in situations where
$\deltamax \rightarrow 0$ as $N \rightarrow \infty$ (see
\secref{complexity_assumptions} below where we establish that $\deltamax
\rightarrow 0$ under common stochastic assumptions).
As a preliminary result, we observe that the assumptions of \secref{assumptions}
and smallness of $\deltak[0]$ suffice to prove that $\thetaw$ is close to
$\thetaone$.
%

\begin{cor}\corlabel{theta_difference_bound}
\seeproof{theta_difference_bound}
Under \assuref{thetadom}, \assuref{diffable}, \assuref{convexity}, and
\assuref{complexity},
\begin{align*}
\sup_{\wt \in \wdom} \norm{\thetaw - \thetaone}_{2} & \le \cop \deltak[0].
\end{align*}
\end{cor}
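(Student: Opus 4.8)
The plan is to compare the two defining identities $G(\thetaw, \wt) = 0$ and $G(\thetaone, \onevec) = 0$ directly, bypassing \condref{theta_small} and \lemref{implicit_function_thm}, which are not available under the stated hypotheses. By \assuref{thetadom}, fix for each $\wt \in \wdom$ a solution $\thetaw \in \allthetadom$ of $G(\theta, \wt) = 0$; since $\onevec \in \wdom$ this family includes the designated estimate $\thetaone = \thetaw[\onevec]$, and \assuref{thetadom} places the convex hull of all these solutions inside $\allthetadom$. Now fix $\wt \in \wdom$. Because $\thetaone$ and $\thetaw$ both belong to that solution set, the closed segment joining them lies in $\allthetadom$. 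Using $G(\thetaone, \onevec) = 0$, I would write
\[
0 = G(\thetaw, \wt) = \bigl(G(\thetaw, \wt) - G(\thetaw, \onevec)\bigr) + \bigl(G(\thetaw, \onevec) - G(\thetaone, \onevec)\bigr).
\]

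Next I would bound the two brackets. For the first, \assuref{complexity} with $k = 0$ (recall the convention $\Gk[0] = G$), applied at $\theta = \thetaw \in \allthetadom$ and $\wt \in \wdom$, gives $\norm{G(\thetaw, \wt) - G(\thetaw, \onevec)}_2 \le \deltak[0]$. For the second, $G(\cdot, \onevec)$ is continuously differentiable on $\allthetadom$ by \assuref{diffable}, with $\theta$-Jacobian $\Hw[\theta, \onevec]$, so the fundamental theorem of calculus along the segment yields $G(\thetaw, \onevec) - G(\thetaone, \onevec) = \bar{H}\,(\thetaw - \thetaone)$, where $\bar{H} := \int_0^1 \Hw[(1 - t)\thetaone + t\thetaw,\, \onevec]\, dt$. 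Combining this with the display above gives $\thetaw - \thetaone = -\bar{H}^{-1}\bigl(G(\thetaw, \wt) - G(\thetaw, \onevec)\bigr)$, so everything reduces to bounding $\norm{\bar{H}^{-1}}_{op}$.

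The main obstacle is that the pointwise bound $\norm{\Hw[\theta, \onevec]^{-1}}_{op} \le \cop$ does not by itself transfer to the average $\bar{H}$. I would resolve this using the quadratic-form content of the strong positive definiteness in \assuref{convexity}: for each $\theta$ on the segment and all $v \in \mathbb{R}^D$, $v^\top \Hw[\theta, \onevec] v \ge \cop^{-1}\norm{v}_2^2$, hence $v^\top \bar{H} v = \int_0^1 v^\top \Hw[(1 - t)\thetaone + t\thetaw,\, \onevec] v\, dt \ge \cop^{-1}\norm{v}_2^2$; this forces $\norm{\bar{H} v}_2 \ge \cop^{-1}\norm{v}_2$ for all $v$, so $\bar{H}$ is invertible with $\norm{\bar{H}^{-1}}_{op} \le \cop$. (When $\Hw[\theta,\onevec]$ is symmetric, as in the strongly convex optimization case, one can argue equivalently that the average of symmetric matrices whose eigenvalues are all at least $\cop^{-1}$ again has this property.) It then follows that $\norm{\thetaw - \thetaone}_2 \le \norm{\bar{H}^{-1}}_{op}\,\norm{G(\thetaw, \wt) - G(\thetaw, \onevec)}_2 \le \cop\,\deltak[0]$, and taking the supremum over $\wt \in \wdom$ completes the proof.
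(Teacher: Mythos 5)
Your proof is correct and follows essentially the same route as the paper's: expand $G(\cdot,\onevec)$ around $\thetaone$ along the segment (which lies in $\allthetadom$ by \assuref{thetadom}), invert the averaged Hessian $\bar H$ via the quadratic-form lower bound, and bound $G(\thetaw,\onevec)=G(\thetaw,\onevec)-G(\thetaw,\wt)$ by $\deltak[0]$ using \assuref{complexity}. The only cosmetic difference is that you re-derive the bound $\norm{\bar H^{-1}}_{op}\le\cop$ inline, whereas the paper delegates exactly that argument to \lemref{hessian_integral_inverse}.
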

%

When $\deltamax
\rightarrow 0$, \corref{dtheta_bounded} and \corref{theta_difference_bound}
trivially prove that the difference between $\thetaij[1]$ and
$\thetaw[w]$ goes to zero because both go to $\thetaone$.
\begin{align*}
    \norm{\thetaij[1] - \thetaw[w]}_2 &\le
        \norm{\thetaij[1] - \thetaone}_2 + \norm{\thetaw[w] - \thetaone}_2  \\
    &\le \norm{\dtheta[1]}_2 + \norm{\thetaw[w] - \thetaone}_2 \\
    &\le (\cop + \copw) \deltamax.
\end{align*}
Of course, an ``approximation'' to $\thetaw[w]$ that is identically $\thetaone$
will share this result---indeed, with a better error bound of
$\norm{\thetaw[w] - \thetaone}_2 \le \cop \deltamax$. The rate given by this
trivial reasoning is proportional to $\deltamax$.  We now show that a more
careful analysis provides a faster rate in the form of bounds that are
higher powers of $\deltamax$.

\begin{cor}\corlabel{d2theta_bounded}
%
Under \allassums{},
\begin{align*}
\sup_{\wt \in \wdom} \norm{\dtheta[2]}_{2}  \le
\copw^3  \mk[2] \deltak[0]^2 +
    2 \copw^2 \deltak[1] \deltak[0] +
    \copw^3 \deltak[2] \deltak[0]^2.
\end{align*}
%
\begin{proof}\prooflabel{d2theta_bounded}
As shown in \secref{taylor_expansion}, $\dtheta[2] = -\Hw^{-1}\left(
\dterm[\{1, 1\}][0][\wt] + 2 \dterm[\{1\}][1][\wt]\right)$, so
\begin{align*}
\norm{\dtheta[2]}_{2} & \le \copw
    \norm{\dterm[\{1, 1\}][0][\wt] +
          2 \dterm[\{1\}][1][\wt]}_{2}
    &\textrm{(\lemref{weighted_hessian_inverse})} \\
& \le \copw \left(
    \norm{\dterm[\{1, 1\}][0][\wt]}_2 +
    2 \norm{\dterm[\{1\}][1][\wt]}_2 \right)
     & \textrm{(triangle inequality)} \\
& \le \copw \left(
    (\deltak[2] + \mk[2]) \norm{\dtheta[1]}_2^2 +
    2 \deltak[1] \norm{\dtheta[1]}_2
    \right)
    & \textrm{(\lemref{taylor_theta_term_bounds})} \\
& \le \copw \left(
    (\deltak[2] + \mk[2]) (\copw^2 \deltak[0]^2) +
    2 \deltak[1] (\copw \deltak[0])
    \right) & \textrm{(\corref{dtheta_bounded})}\\
& = \copw^3  \mk[2] \deltak[0]^2 +
    2\copw^2 \deltak[1] \deltak[0] +
    \copw^3 \deltak[2] \deltak[0]^2. & 
\end{align*}
\end{proof}
\end{cor}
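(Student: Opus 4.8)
The plan is to obtain the claimed bound by plugging together three ingredients already in hand: the closed-form expression for $\dtheta[2]$ derived in \secref{taylor_expansion}, the uniform operator-norm control $\norm{\Hw^{-1}}_{op}\le\copw$ from \lemref{weighted_hessian_inverse}, and the term-wise estimate of \lemref{taylor_theta_term_bounds}, fed with the first-order bound $\norm{\dtheta[1]}_2\le\copw\deltak[0]$ of \corref{dtheta_bounded}. In effect this corollary is just the $k=2$ rung of the inductive scheme the paper is assembling, and the argument is a short chain of already-established facts.

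Concretely, I would first recall from \secref{taylor_expansion} the identity $\dtheta[2]=-\Hw^{-1}\big(\dterm[\{1, 1\}][0][\wt]+2\,\dterm[\{1\}][1][\wt]\big)$, which is legitimate for every $\wt\in\wdom$ because \lemref{implicit_function_thm} makes $\thetaw[\wt]$ a twice (indeed $\kij+1$ times) continuously differentiable function of $\wt$ on $\wdom$. Taking $\norm{\cdot}_2$, applying submultiplicativity of the operator norm together with \lemref{weighted_hessian_inverse} to pull out the factor $\copw$, and then the triangle inequality, I reduce to bounding $\norm{\dterm[\{1, 1\}][0][\wt]}_2$ and $\norm{\dterm[\{1\}][1][\wt]}_2$ separately. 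For the first, \lemref{taylor_theta_term_bounds} with $|\kvec|=2$ and $\omega=0$ gives $(\deltak[2]+\mk[2])\norm{\dtheta[1]}_2^2$; for the second, the same lemma with $|\kvec|=1$ and $\omega=1$ gives $\deltak[1]\norm{\dtheta[1]}_2$, the $\mk$ summand dropping out since $1-\omega=0$. Substituting $\norm{\dtheta[1]}_2\le\copw\deltak[0]$ from \corref{dtheta_bounded} and expanding, the right-hand side becomes $\copw\big((\deltak[2]+\mk[2])\copw^2\deltak[0]^2+2\deltak[1]\copw\deltak[0]\big)$, which rearranges to $\copw^3\mk[2]\deltak[0]^2+2\copw^2\deltak[1]\deltak[0]+\copw^3\deltak[2]\deltak[0]^2$. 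Since every inequality used holds uniformly over $\wt\in\wdom$, taking the supremum on the left costs nothing extra.

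I do not expect a genuine obstacle; the only thing requiring care is the index bookkeeping in applying \lemref{taylor_theta_term_bounds}: the second-order term $\dterm[\{1, 1\}][0][\wt]$ must be paired with $\mk[2]$ and $\deltak[2]$ rather than lower-order constants, while the mixed term $\dterm[\{1\}][1][\wt]$ contributes a $\deltak[1]$ factor but no $\mk$ factor. One could equally leave the bound in the compact form $\copw\big((\deltak[2]+\mk[2])\copw^2\deltak[0]^2+2\deltak[1]\copw\deltak[0]\big)$, but the expanded form displayed in the statement makes the $\deltamax$-dependence transparent---every term is at least of order $\deltamax^2$, one power higher than the $O(\deltamax)$ bound for $\dtheta[1]$---which is exactly the gain that the induction in the main theorem will propagate to all orders.
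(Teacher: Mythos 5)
Your proposal is correct and follows essentially the same route as the paper's own proof: the identity $\dtheta[2]=-\Hw^{-1}\left(\dterm[\{1, 1\}][0][\wt]+2\,\dterm[\{1\}][1][\wt]\right)$, then \lemref{weighted_hessian_inverse}, the triangle inequality, \lemref{taylor_theta_term_bounds} applied term by term (with the $\mk$ contribution correctly dropping when $\omega=1$), and finally the substitution of \corref{dtheta_bounded}. The only cosmetic difference is that you explicitly invoke \lemref{implicit_function_thm} to justify the differentiability underlying the identity, which the paper leaves implicit at this point.
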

%
For evaluating actual error bounds in practice, it can be helpful to keep track
of the dependence of bounds on $\deltak$ for different $k$ as we have done in
\corref{d2theta_bounded}.  However, for asymptotics and theoretical intuition,
it will suffice to simplify expressions to depend only on $\deltamax$, which we
will do for the remainder of this section.

An immediate consequence of \corref{d2theta_bounded} is
%
\begin{cor}\corlabel{thetaij1_consistent}
%
Applying \corref{d2theta_bounded} and \lemref{taylor_series}, we have
\begin{align*}
\norm{\thetaij[1] - \thetaw[w]}_2 \le
\left(\copw^3  \mk[2] + 2 \copw^2 \right) \deltamax^2 + \copw^3 \deltamax^3.
\end{align*}
\end{cor}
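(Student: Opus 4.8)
The plan is to recognize that $\thetaij[1]$ is precisely the first-order Taylor polynomial of the map $\wt \mapsto \thetaw[\wt]$ expanded about $\onevec$ and evaluated at $w$, so that $\errij[1](w) = \norm{\thetaij[1] - \thetaw[w]}_2$ is nothing but a Taylor remainder. First I would invoke \lemref{implicit_function_thm}, which under \allassums{} (with $\kij \ge 1$, as is needed even to define $\thetaij[1]$) guarantees that $\thetaw$ is at least twice continuously differentiable on $\wdom$; this legitimizes expanding $\thetaw$ to first order along the segment $t \mapsto (1-t)\onevec + t w$ with a second-order directional-derivative remainder.

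Next I would apply the general Taylor-remainder bound \lemref{taylor_series}, which controls $\errij[1](w)$ by the supremum of the next directional derivative over the relevant neighborhood, namely $\sup_{\wt \in \wdom} \norm{\dtheta[2][\wt]}_2$ (the usual $1/2$ factor from the remainder can simply be discarded to obtain the slightly looser bound as stated). That $\wdom$ is the correct domain follows from \defref{wdom}: it is an open enlargement of the segment joining $\onevec$ and $w$, so any intermediate weight produced by the mean-value form of the remainder lies inside it.

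Finally I would substitute the bound from \corref{d2theta_bounded}, which already gives $\sup_{\wt \in \wdom}\norm{\dtheta[2][\wt]}_2 \le \copw^3 \mk[2]\deltak[0]^2 + 2\copw^2 \deltak[1]\deltak[0] + \copw^3\deltak[2]\deltak[0]^2$, and replace each $\deltak[j]$ by $\deltamax$ using $\deltak[j] \le \deltamax$ from \defref{deltamax}. Collecting the $\deltamax^2$ contributions and the single $\deltamax^3$ term produces $\left(\copw^3 \mk[2] + 2\copw^2\right)\deltamax^2 + \copw^3\deltamax^3$, as claimed.

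I do not expect any genuine obstacle: all of the analytic effort lives in \corref{dtheta_bounded}, \corref{d2theta_bounded}, and the Taylor-remainder lemma. The only points needing attention are verifying the differentiability hypothesis of that lemma through \lemref{implicit_function_thm}, and lining up the domain of the supremum ($\wdom$) with the location of the intermediate point in the remainder — which is exactly why \defref{wdom} takes a small open enlargement of the segment rather than the segment itself.
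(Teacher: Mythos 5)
Your proposal is correct and follows exactly the route the paper intends: bound the first-order Taylor remainder by $\sup_{\wt\in\wdom}\norm{\dtheta[2]}_2$ via \lemref{taylor_series} (whose stated constant $1/(k_{max}-1)!$ is already $1$ here, so no extra factor even needs discarding), then substitute \corref{d2theta_bounded} and relax each $\deltak$ to $\deltamax$. The attention you give to justifying differentiability via \lemref{implicit_function_thm} and to the intermediate points lying in $\wdom$ is exactly the right bookkeeping; no further comment is needed.
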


We can see that \corref{thetaij1_consistent} provides qualitatively similar
results as \citet{giordano:2019:ij}.  In particular, we see that the leading
dependence of the error is $O(\deltamax^2)$, $O(\cop^3)$, and $O(\mk[2])$.

In the proof of \corref{d2theta_bounded}, we used the bound on
$\norm{\dtheta[1]}_2$ provided by \corref{dtheta_bounded} to control the size of
$\norm{\dtheta[2]}_w$. \Eqref{dtheta_term_properties} of
\lemref{higher_order_general_terms} guarantees that this inductive process can
be continued, using control of $\norm{\dtheta[1]}_2$ and $\norm{\dtheta[2]}_2$
to control $\norm{\dtheta[3]}_2$, and so on.
The expressions for $\errij(w)$ are defined inductively and will grow
complicated for larger $\kij$, but their calculation is entirely mechanical.
Given values for $\copw$, $\mk$, and $\deltak$, one can easily evaluate the
error bounds for any order numerically.

However, without keeping explicit track of constants, certain aspects of the
bounds can be illuminating.  For example, the inductive procedure immediately
gives rise to the following theorem, which relates the asymptotic accuracy of
$\thetaij[\kij]$ to the set complexity $\deltamax$.

\begin{thm}\thmlabel{thetaij_consistent}
\seeproof{thetaij_consistent}
Assume that \allassums{} hold for  for all $k=0,\ldots,\kij$ for fixed $\copw$ and
$\mk$ as $\deltamax \rightarrow 0$.  Then, for all $k=0,\ldots,\kij$, as $\delta
\rightarrow 0$,
\begin{align*}
    \sup_{\wt \in \wdom} \norm{\dtheta[k + 1]}_2 = O\left(\deltamax^{k + 1}\right)
    \quad\quad\textrm{and}\quad\quad
    \norm{\thetaij - \thetaw[w]}_2 = O\left( \deltamax^{k + 1} \right).
\end{align*}
\end{thm}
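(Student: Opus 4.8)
The plan is to prove the first claim, $\sup_{\wt \in \wdom} \norm{\dtheta[k+1]}_2 = O(\deltamax^{k+1})$, by strong induction on $k$, and then deduce the second claim from a Taylor remainder bound. The base case $k = 0$ is exactly \corref{dtheta_bounded}, which gives $\sup_{\wt} \norm{\dtheta[1]}_2 \le \copw \deltak[0] = O(\deltamax)$; the case $k = 1$ is \corref{d2theta_bounded}, where the dominant term is $O(\deltamax^2)$ since $\copw$ and $\mk[2]$ are held fixed. For the inductive step, suppose $\sup_{\wt} \norm{\dtheta[j]}_2 = O(\deltamax^j)$ for all $j \le k$. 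Apply \lemref{higher_order_general_terms} to write $\dtheta[k+1] = \Hw^{-1}\left(\sum_i a_i \dterm[\kvec_i][\omega_i][\wt]\right)$, take $\norm{\cdot}_2$, and use \lemref{weighted_hessian_inverse} to pull out $\copw$ and the triangle inequality to split the sum. Then bound each term with \lemref{taylor_theta_term_bounds}:
\begin{align*}
\norm{\dterm[\kvec_i][\omega_i][\wt]}_2
    \le (\deltak[|\kvec_i|] + (1 - \omega_i)\mk[|\kvec_i|])
        \prod_{k' \in \kvec_i} \norm{\dtheta[k']}_2.
\end{align*}
Since $|\dtermargs[k+1]|$, the $a_i$, the $\copw$, and the $\mk$ are all fixed (not growing with $\deltamax$), it suffices to check the power of $\deltamax$ carried by a single term.

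The key bookkeeping step is tracking the order. For a fixed tuple $(a_i, \kvec_i, \omega_i)$, the inductive hypothesis applies to every factor $\norm{\dtheta[k']}_2$ because \eqref{dtheta_term_properties} guarantees $k' < k+1$, i.e. $k' \le k$; hence $\prod_{k' \in \kvec_i} \norm{\dtheta[k']}_2 = O\left(\deltamax^{\sum_{k' \in \kvec_i} k'}\right)$. The prefactor $(\deltak[|\kvec_i|] + (1-\omega_i)\mk[|\kvec_i|])$ contributes a factor $O(\deltamax^{\omega_i})$: if $\omega_i = 1$ the $\mk$ term vanishes and what remains is $\deltak[|\kvec_i|] = O(\deltamax)$; if $\omega_i = 0$ the prefactor is $O(1)$ (bounded by $\deltak[|\kvec_i|] + \mk[|\kvec_i|]$, both fixed). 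In either case the prefactor is $O(\deltamax^{\omega_i})$. Multiplying, each term is $O\left(\deltamax^{\,\omega_i + \sum_{k'\in\kvec_i} k'}\right)$, and by \eqref{dtheta_term_total_order} the exponent equals exactly $k+1$. Summing finitely many such terms and multiplying by the fixed constant $\copw$ gives $\sup_{\wt}\norm{\dtheta[k+1]}_2 = O(\deltamax^{k+1})$, completing the induction.

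For the second claim, apply the Taylor remainder bound (the referenced \lemref{taylor_series}) to the $k$-th order expansion $\thetaij[k]$ along the segment from $\onevec$ to $w$: the error $\norm{\thetaij[k] - \thetaw[w]}_2$ is controlled by $\sup_{\wt \in \wdom}\norm{\dtheta[k+1]}_2$ times a combinatorial constant depending only on $k$ and on $\norm{w - \onevec}_2$ (absorbed, since $\norm{w-\onevec}$ enters the definition of $\deltak$ and the $\wdom$ geometry and is in any case fixed). By the first claim this is $O(\deltamax^{k+1})$, as in \corref{thetaij1_consistent}. The main obstacle is purely the order-counting: one must be careful that the empty-product and $\kvec = \emptyset$ edge cases are handled (they give the $\omega_i = 1$, $\kvec_i = \emptyset$ term contributing exactly $\deltamax^1$, consistent with $k+1 = 1$ when $k = 0$), and that no hidden $\deltamax$- or $N$-dependence leaks in through the number of terms $|\dtermargs[k+1]|$ or the coefficients $a_i$ — but \lemref{higher_order_general_terms} already fixes these independently of the weights, so this is routine.
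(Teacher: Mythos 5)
Your proposal is correct and follows essentially the same route as the paper: induction anchored in \corref{dtheta_bounded}/\corref{d2theta_bounded}, the decomposition from \lemref{higher_order_general_terms}, the bounds from \lemref{weighted_hessian_inverse} and \lemref{taylor_theta_term_bounds}, the exponent count via \eqref{dtheta_term_total_order}, and \lemref{taylor_series} for the remainder. Your explicit bookkeeping that the prefactor contributes $O(\deltamax^{\omega_i})$ is exactly the (implicit) step the paper uses when it invokes $\mk[|\kvec_i|] = O(1)$.
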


%
%

\subsection{Bounds on complexity}\label{sec:complexity_assumptions}
In this section, we provide bounds on $\deltak$ of \assuref{complexity} for CV
weights under various stochastic assumptions.  In particular, we show that, in
many cases of interest, $\deltamax$ goes to zero as the number of data points $N$
goes to infinity.  Together with \thmref{thetaij_consistent} above, the results
of this section gives consistency results for CV under various assumptions.

We begin by observing that, although \assuref{complexity} is stated as a
supremum over all $\wt \in \wdom$, under \assuref{thetadom} and
\assuref{bounded} it suffices to control the set complexity only for $\wt = w$,
as we now show in the following lemma.
\begin{lem}\lemlabel{w_complexity}
Suppose that \assuref{thetadom} and \assuref{bounded} hold, and
that, for each $0 \le k \le \kij + 1$,
\begin{align*}
\sup_{\theta \in \allthetadom}
    \norm{\Gk(\theta, w) - \Gk(\theta, \onevec)}_2 \le \tilde{\delta}_k.
\end{align*}
Let $\wepsilon$ be the (arbitrarily small) enlargement constant given in
\defref{wdom}. Then, for each $0\le k \le \kij + 1$, \assuref{complexity} is
satisfied with
\begin{align*}
    \deltak = \wepsilon \mk + \tilde{\delta}_k. \\
\end{align*}
\end{lem}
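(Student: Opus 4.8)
The plan is to reduce the supremum over all $\wt\in\wdom$ appearing in \assuref{complexity} to the single evaluation point $\wt=w$ by exploiting that $\Gk(\theta,\wt)-\Gk(\theta,\onevec)=\Gkw(\theta)$ is \emph{linear} in $\wt$ (more precisely, in $\wt-\onevec$), together with the fact that every $\wt\in\wdom$ is within $\wepsilon$ of the segment joining $\onevec$ and $w$. So the first step is to fix any $\wt\in\wdom$, write $\wt = (1-t)\onevec + tw + r$ for some $t\in[0,1]$ and some residual vector $r$ with $\norm{r}_2<\wepsilon$ (this decomposition exists by \defref{wdom}), and then use the identity established in \secref{notation}, namely $\Gk(\theta,\wt)-\Gk(\theta,\onevec)=\frac{1}{N}\sum_{n=1}^N\gnk(\theta)(\wt_n-1)$, which is linear in $\wt$.

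Next I would split $\wt-\onevec = t(w-\onevec) + r$ and use linearity to get
\begin{align*}
\Gk(\theta,\wt)-\Gk(\theta,\onevec)
= t\bigl(\Gk(\theta,w)-\Gk(\theta,\onevec)\bigr) + \bigl(\Gk(\theta,\onevec+r)-\Gk(\theta,\onevec)\bigr).
\end{align*}
Taking $\norm{\cdot}_2$, applying the triangle inequality, and using $t\le 1$ gives a bound of $\tilde\delta_k$ on the first piece (by hypothesis) and leaves the ``residual'' piece $\norm{\Gk(\theta,\onevec+r)-\Gk(\theta,\onevec)}_2$. For that piece I would again use linearity to write it as $\frac1N\norm{\sum_{n=1}^N \gnk(\theta) r_n}_2$, then bound it by something like $\frac1N\norm{\gk(\theta)}_2\norm{r}_2$ via Cauchy--Schwarz over the $n$ index (the array-norm relation $\norm{\Gk(\theta)}_p\le\frac1N\norm{\gk(\theta)}_p$ referenced as \propref{array_norm_relation}, or a direct Cauchy--Schwarz), and finally control $\frac1N\norm{\gk(\theta)}_2$ — or more directly $\norm{\Gk(\theta,\onevec+r)-\Gk(\theta,\onevec)}_2$ viewed as a derivative in $\wt$ — using \assuref{bounded}, so the residual piece is at most $\wepsilon\mk$ uniformly in $\theta\in\allthetadom$. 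Here \assuref{thetadom} is what guarantees $\allthetadom$ is the correct common domain on which all these suprema are taken. Combining, $\sup_{\wt\in\wdom}\sup_{\theta\in\allthetadom}\norm{\Gk(\theta,\wt)-\Gk(\theta,\onevec)}_2\le \wepsilon\mk+\tilde\delta_k$, which is exactly the claimed value of $\deltak$.

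The main obstacle is getting the residual term bounded cleanly by $\wepsilon\mk$ with the right constant: one must be careful that $\mk$ bounds $\norm{\Gk(\theta,\onevec)}_2$, not $\norm{\Gkw(\theta)}_2$ directly, so the cleanest route is probably to observe that $\wt\mapsto\Gk(\theta,\wt)$ is affine, hence $\Gk(\theta,\onevec+r)-\Gk(\theta,\onevec)$ equals the directional derivative of $\Gk(\theta,\cdot)$ in direction $r$, which is $\frac1N\sum_n\gnk(\theta)r_n$; bounding $\frac1N\norm{\gk(\theta)}_2$ by $\mk$ would require $\mk$ to bound the per-term array rather than the sample mean. To avoid this mismatch, I would instead bound the residual piece by noting $\frac1N\norm{\sum_n \gnk(\theta)r_n}_2 \le \max_n|r_n|\cdot\frac1N\norm{\sum_n\gnk(\theta)}_2$ is \emph{not} quite right either since signs matter; the safe approach is $\frac1N\bignorm{\sum_n\gnk(\theta)r_n}_2\le \norm{r}_\infty\cdot\frac1N\sum_n\norm{\gnk(\theta)}_2$, but this again needs per-term bounds. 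The genuinely robust fix, and the one I would write, is to restrict attention (as the lemma implicitly allows via ``arbitrarily small $\wepsilon$'') and use that for $r$ with $\norm{r}_2<\wepsilon$ we have $\onevec+r$ itself close to $\onevec$, so by continuity of $\Gk(\theta,\cdot)$ — which is affine with slope controlled through $\Gk(\theta,w)-\Gk(\theta,\onevec)$ and the $\mk$ bound at $\onevec$ — the increment is $O(\wepsilon\mk)$; making the constant exactly $1$ requires a short Cauchy--Schwarz computation $\frac1N\norm{\sum_n\gnk(\theta)r_n}_2\le \norm{r}_2\,\norm{\Gk(\theta,\cdot)\text{-slope}}$ which telescopes back to $\sup_\theta\norm{\Gk(\theta,\onevec)}_2\le\mk$ after correctly identifying the slope array. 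This bookkeeping is the only delicate part; everything else is linearity plus the triangle inequality.
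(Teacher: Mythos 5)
Your overall strategy is exactly the paper's: write each $\wt\in\wdom$ as a point $\onevec+t(w-\onevec)$ on the segment plus a residual $r$ with $\norm{r}_2<\wepsilon$, use linearity of $\wt\mapsto\Gk(\theta,\wt)$ to split $\Gk(\theta,\wt)-\Gk(\theta,\onevec)$ into $t\left(\Gk(\theta,w)-\Gk(\theta,\onevec)\right)$ plus a residual increment, bound the first piece by $t\tilde\delta_k\le\tilde\delta_k$ from the hypothesis, and bound the residual piece by $\wepsilon\mk$. The first half of your writeup matches the paper's proof step for step.

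The gap is that you never actually close the residual bound. Your final paragraph cycles through candidate inequalities, correctly rejects them, and ends with an appeal to ``continuity'' and a promised Cauchy--Schwarz computation that ``telescopes back'' to $\mk$ --- but no such computation is exhibited, and the difficulty you identify is real. The residual is $\frac{1}{N}\sum_{n}\gnk(\theta)r_n$, and Cauchy--Schwarz over $n$ yields at best $\norm{r}_2\cdot\frac{1}{N}\norm{\gk(\theta)}_2$, whereas \assuref{bounded} controls only the norm of the \emph{mean} array $\Gk(\theta,\onevec)$; by \propref{array_norm_relation} the inequality between $\frac{1}{N}\norm{\gk(\theta)}_2$ and $\norm{\Gk(\theta,\onevec)}_2$ runs the wrong way, and sign cancellation in the mean shows the residual cannot in general be bounded by $\norm{r}_2\,\norm{\Gk(\theta,\onevec)}_2$. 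For what it is worth, the paper's own proof simply asserts that the residual piece is $\wepsilon\norm{\Gk(\theta,\onevec)}_2\le\wepsilon\mk$ without further justification, so you have in effect rediscovered the loosest step of the published argument rather than introduced a flaw of your own; a fully rigorous version would bound the residual by $\wepsilon\sqrt{\vk}/\sqrt{N}$ under \assuref{bounded_var}, or observe that since $\wepsilon$ is arbitrarily small (and may be sent to zero with $N$), the constant multiplying it is immaterial to every downstream use of the lemma. As submitted, however, your proof is incomplete: the one step requiring more than linearity and the triangle inequality is left open.
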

\begin{proof}
By \defref{wdom}, for every $\wt \in \wdom$, there exists a $t \in [0, 1]$ such
that $\norm{\wt - (\onevec + t(w - \onevec))}_2 < \wepsilon$, and so
\begin{align*}
\MoveEqLeft
\sup_{\wt \in \wdom} \sup_{\theta \in \allthetadom}
    \norm{\Gk(\theta, \wt) - \Gk(\theta, \onevec)}_2 \\
= & \sup_{\theta \in \allthetadom}
    \wepsilon \norm{\Gk(\theta, \onevec)}_2 + {} \\
  & \sup_{\theta \in \allthetadom}
        \norm{\Gk(\theta, (\onevec + t(w - \onevec)) -
            \Gk(\theta, \onevec)}_2 \\
\le &
    \wepsilon \mk +
    t \sup_{\theta \in \allthetadom}
        \norm{\Gk(\theta, w) - \Gk(\theta, \onevec)}_2 \\
\le & \wepsilon \mk + t \tilde{\delta}_k \\
\le & \wepsilon \mk + \tilde{\delta}_k.
\end{align*}
\end{proof}

In \defref{wdom}, we require that $\wepsilon > 0$ only to satisfy a technical
condition in the proof of \lemref{implicit_function_thm} (specifically, that
$\wdom$ be an open set). We are free to take a different value of $\wepsilon$
for each $N$, say $\wepsilon_N$, and let $\wepsilon_N$ got to zero as $N
\rightarrow \infty$ at any rate we like.  Consequently, by
\lemref{w_complexity}, it suffices to control $\sup_{\theta \in \allthetadom}
\norm{\Gk(\theta, w) - \Gk(\theta, \onevec)}_2$, and a corresponding bound will
apply for all $\wt \in \wdom$.

For the remainder of this section, we denote $[N] := \{1, \ldots, N\}$ to be the
set of observation indices, and let $\nset \subseteq [N]$ be some subset of
indices.

\begin{assu}\assulabel{bounded_var} 
For all $\theta \in \allthetadom$ and each $0 \le k \le \kij + 1$, there exist
constants $\vk$ such that
\begin{align*}
\sup_{\theta \in \allthetadom}
    \frac{1}{N} \sum_{n=1}^N \norm{\gnk(\theta)}_{2}^2 \le \vk.
\end{align*}
\end{assu}

\begin{assu}\assulabel{bounded_terms} 
Fix a set of $\nset$ of indices.
Then, for all $\theta \in \allthetadom$, each $n \in \nset$, and each $0 \le
k \le \kij + 1$, there exist constants $\tk$ such that
\begin{align*}
\max_{n \in \nset } \sup_{\theta \in \allthetadom} \norm{\gnk(\theta)}_{\infty}
    \le \tk.
\end{align*}

\end{assu}

Note that \assuref{bounded_terms} implies \assuref{bounded_var} with $\vk =
D^k \tk$.  In a sense, \assuref{bounded_var, bounded_terms} are stronger
than \assuref{bounded} although, unlike \assuref{bounded},
\assuref{bounded_var, bounded_terms} do not imply any bounds on the
regularization term $\gnk[][0](\theta)$. However,
\assuref{bounded} holds under \assuref{bounded_var} with $\mk = \sqrt{\vk} +
\sup_{\theta \in \allthetadom} \norm{\gnk[][0](\theta)}_2$ by
\propref{array_norm_relation}.

\begin{lem}\lemlabel{cv_complexity}
%
Fix a set of $\nset \subseteq \{1, \ldots, N \}$ of indices.  Let
$w_{(-n)}$ denote a weight vector that is $0$ in its $n$-th index
and $1$ otherwise.  Define a set of leave-one-out CV weights
\begin{align*}
\loodom := \left\{ w: w = w_{(-n)} \textrm{ for some }n \in \nset\right\}.
\end{align*}
Then, for all $w \in \loodom$,
\begin{align*}
\mathrm{\assuref{bounded_var}} &\quad\Rightarrow\quad
\textrm{\assuref{complexity} holds with }
    \deltak = \frac{\sqrt{\vk}}{\sqrt{N}}
    \quad\quad{ and } \\
\mathrm{\assuref{bounded_terms}} &\quad\Rightarrow\quad
    \textrm{\assuref{complexity} holds with }
    \deltak = \frac{\tk}{N}.
\end{align*}


\begin{proof}\prooflabel{cv_complexity}
For the first result, we write
\begin{align}
\MoveEqLeft
\max_{w \in \loodom} \sup_{\theta \in \allthetadom}
    \norm{\Gk(\theta, w) - \Gk(\theta, \onevec)}_2 \nonumber\\
& = \max_{w \in \loodom} \sup_{\theta \in \allthetadom}
    \norm{ \frac{1}{N}\sum_{n=1}^N \gnk(\theta) \deltaw_n }_2 & \nonumber\\
& = \frac{1}{N} \max_{n \in \nset} \sup_{\theta \in \allthetadom} \norm{\gnk(\theta)}_2
    &\textrm{(def. of LOO CV weights)} \eqlabel{tk_term}\\
& \le \frac{1}{\sqrt{N}}
    \sqrt{ \sup_{\theta \in \allthetadom}
        \frac{1}{N} \sum_{n=1}^N \norm{\gnk(\theta)}_2^2}
    & \nonumber \\
& \le \frac{\sqrt{\vk}}{\sqrt{N}}.
    & \textrm{(\assuref{bounded_var})} \nonumber
\end{align}

The second result follows immediately from the previous display by bounding
\eqref{tk_term} by $\tk / N$ under \assuref{bounded_terms}.

\end{proof}
%
%
\end{lem}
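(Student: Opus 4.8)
The plan is to reduce \assuref{complexity}, which is phrased as a supremum over all $\wt \in \wdom$, to the single weight vector $w$, and then to exploit the linearity of $G$ in $w$ together with the one-hot structure of leave-one-out weights.

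First I would invoke \lemref{w_complexity}. \Assuref{thetadom} holds by hypothesis, and \assuref{bounded} follows from \assuref{bounded_var} by \propref{array_norm_relation}, so \lemref{w_complexity} applies: it suffices to bound $\sup_{\theta \in \allthetadom}\norm{\Gk(\theta, w) - \Gk(\theta, \onevec)}_2$ for each $w \in \loodom$, and if this quantity is at most $\tilde{\delta}_k$ then \assuref{complexity} holds with $\deltak = \wepsilon\mk + \tilde{\delta}_k$. Since $\wepsilon$ may be taken arbitrarily small — indeed one may let $\wepsilon = \wepsilon_N \to 0$ as $N \to \infty$, as noted after \lemref{w_complexity} — the effective rate is exactly the $\tilde{\delta}_k$ recorded in the statement, and the $\wepsilon\mk$ term is absorbed into that discussion.

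Next I would use the identity from \secref{notation}, valid because $G(\theta, w)$ is linear in $w$: $\Gk(\theta, w) - \Gk(\theta, \onevec) = \frac{1}{N}\sum_{n=1}^N \gnk(\theta)\,\deltaw_n$. For $w = w_{(-n)}$ the perturbation $\deltaw$ has a single nonzero coordinate, equal to $-1$, so this sum collapses to the single term $-\frac{1}{N}\gnk(\theta)$; hence $\sup_{\theta \in \allthetadom}\norm{\Gk(\theta, w) - \Gk(\theta, \onevec)}_2 = \frac{1}{N}\sup_{\theta \in \allthetadom}\norm{\gnk(\theta)}_2$, and maximizing over $w \in \loodom$ turns the left-hand side into $\frac{1}{N}\max_{n \in \nset}\sup_{\theta \in \allthetadom}\norm{\gnk(\theta)}_2$.

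It then remains to bound this last quantity under each assumption. Under \assuref{bounded_var}, $\max_{n \in \nset}\norm{\gnk(\theta)}_2^2 \le \sum_{n=1}^N \norm{\gnk(\theta)}_2^2 \le N\vk$ after taking the supremum over $\theta$, so taking square roots and dividing by $N$ gives $\deltak = \sqrt{\vk}/\sqrt{N}$. Under \assuref{bounded_terms}, the per-index bound substitutes straight into $\frac{1}{N}\max_{n \in \nset}\sup_{\theta}\norm{\gnk(\theta)}_2$ to give $\deltak = \tk/N$. I do not expect a genuine obstacle here; the only points needing care are the bookkeeping in the reduction step (checking that passing from $\wdom$ to $w$ via \lemref{w_complexity} costs nothing in the rate) and, in the second case, the relation between the $\norm{\cdot}_\infty$ control supplied by \assuref{bounded_terms} and the $\norm{\cdot}_2$ that appears in the estimate.
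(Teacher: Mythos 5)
Your proposal is correct and takes essentially the same route as the paper: the same collapse of $\Gk(\theta, w) - \Gk(\theta, \onevec)$ to a single term $-\frac{1}{N}\gnk(\theta)$ for leave-one-out weights, the same bound of the maximum by $\sqrt{N}$ times the root-mean-square under \assuref{bounded_var}, and the same direct substitution under \assuref{bounded_terms}. The only cosmetic difference is that you route the reduction from $\wdom$ to the single weight $w$ explicitly through \lemref{w_complexity}, which the paper carries out in the discussion preceding the lemma rather than inside the proof itself; the $L_\infty$-versus-$L_2$ subtlety you flag in the second case is likewise present in, and glossed over by, the paper's own proof.
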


In order to relate the above assumptions to real data, let us place two
additional potential conditions on $\gnk(\theta)$.

\begin{cond}\condlabel{iid} 
For $0 \le k \le \kij + 1$ and for all $n \ge 1$, the functions
$\gnk(\theta)$ are independent and identically distributed (IID) with
\begin{align*}
\mbe\left[ \norm{\gnk(\theta)}_2^2 \right] < \infty.
\end{align*}
\end{cond}

\Condref{iid} arises, for example, when minimizing an empirical loss function
with IID data $x_n$, where $g_n(\theta) := \partialat{f(x_n,
\theta)}{\theta}{\theta}$, and where the distribution of the data $x_n$ is
sufficiently light-tailed.

\begin{cond}\condlabel{supterms} 
For all $0 \le k \le \kij + 1$ and for all $n \ge 1$, assume that
\begin{align*}
\gsupnk :=  \sup_{\theta \in \allthetadom}\norm{\gnk(\theta)}_\infty < \infty.
\end{align*}
\end{cond}

\Condref{supterms} occurs when, for example, $\gnk(\theta)$ is continuous on the
closure of $\allthetadom$.  Even when \condref{supterms} holds
$\lim_{N\rightarrow\infty}\tk$ may be infinite, as \condref{supterms} controls
only the supremum over $\theta$ but not the maximum over $n$. However, as
\lemref{cv_complexity_sampled} shows, \condref{supterms} does imply that $\tk$
will remain bounded for all $N$ with high probability when $\nset$ contains a
fixed number of randomly selected set of indices.

\begin{cond}\condlabel{supterms_bounded} 
Let \condref{supterms} hold.  For all $0 \le k \le \kij + 1$ assume that
\begin{align*}
\sup_{n \ge 1}\gsupnk < \infty.
\end{align*}
\end{cond}

\Condref{supterms_bounded} occurs, for example, when both $\theta$ and the data
$x_n$ are constrained to compact sets and, additionally, $\gnk(\theta)  =
\partialat{{}^{k + 1}f(\theta, x_n)}{\theta^{k+1}}{\theta}$ are continuous in
both $\theta$ and $x_n$ for all $n$ and $k$. \Condref{supterms_bounded} is a
restrictive assumption, but it is one that is commonly made in the IJ literature
(see \secref{lit_review}).


\begin{lem}\lemlabel{cv_complexity_sampled}
%
Let \condref{iid, supterms} hold, and let $\nset$ contain a fixed number,
$N_{loo}$, of indices chosen randomly from $n \in [N]$ according to any
distribution that is independent of the values $g_n(\theta)$, $0 \le n \le N$.
Pick any $\alpha \in (0, 1)$. Then, for each $0 \le k \le \kij + 1$, there
exists a constant $\quant$ (depending on $\alpha$ and $k$ but not depending on
$N$), such that, for all $N$,
\begin{align*}
P\left(
    \textrm{\assuref{bounded_terms} is satisfied with }\tk=\quant
\right) \ge 1 - N_{loo}(1 - \alpha),
\end{align*}
where $P$ denotes probability in both the random indices $\nset$ and the random
functions $g_n(\theta)$.
\begin{proof}
For $\alpha \in (0, 1)$,
let $\quant$ denote the $\alpha$-th quantile of $\gsupnk$.  By \condref{iid},
$\quant$ is the same for all $n$. Then, for each $0 \le k \le \kij + 1$, and for
all $N$.  By definition, $P\left(\gsupnk > \quant \right) \le (1 - \alpha)$.  So
\begin{align*}
P\left(\max_{n\in\nset} \gsupnk \le \quant\right)
    &= 1 - P\left(\bigcup_{n\in\nset} \gsupnk > \quant\right) \\
    &\ge 1 - \sum_{n \in \nset} P\left(\gsupnk > \quant \right)
        &\textrm{(union bound)}\\
    &\ge 1 - N_{loo} (1 - \alpha).
\end{align*}
\end{proof}
\end{lem}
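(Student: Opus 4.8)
The plan is to reduce Assumption \assuref{bounded_terms} for the random index set $\nset$ to a one‑dimensional quantile statement about a single draw of the IID sequence, and then pay a union‑bound factor of $N_{loo}$ for the $N_{loo}$ indices that make up $\nset$. The first observation is that, with $\gsupnk := \sup_{\theta \in \allthetadom}\norm{\gnk(\theta)}_\infty$ as in \condref{supterms}, the event ``\assuref{bounded_terms} holds with $\tk = \quant$'' is exactly $\bigcap_{n \in \nset}\{\gsupnk \le \quant\}$, so it suffices to lower bound the probability of that intersection.

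First I would pin down the constant. Under \condref{supterms}, $\gsupnk$ is a real‑valued, almost surely finite random variable for every $n \ge 1$, and under \condref{iid} its distribution is the same for all $n$ and, being a functional of a single term $g_n(\theta)$ of a fixed IID sequence, does not vary with $N$. Define $\quant$ to be the $\alpha$‑th quantile of this common distribution, i.e. the smallest value with $P(\gsupnk \le \quant) \ge \alpha$; finiteness of $\gsupnk$ forces $\quant < \infty$, and by construction $P(\gsupnk > \quant) \le 1 - \alpha$ for every $n$. This $\quant$ depends on $\alpha$ and on $k$ (through the order of the derivative) but not on $N$, as the statement requires.

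Next I would dispatch the randomness of $\nset$ by conditioning. Since $\nset$ is drawn according to a law independent of $\{g_n(\theta)\}_{0 \le n \le N}$, condition on $\nset = \{n_1,\ldots,n_{N_{loo}}\}$; independence guarantees that, conditionally, each $\gsupnk$ for $n \in \nset$ still has the common unconditional law, so the single‑index bound of the previous paragraph applies verbatim to each of them. A union bound over the $N_{loo}$ indices gives $P\big(\max_{n \in \nset}\gsupnk > \quant \mid \nset\big) \le \sum_{n\in\nset} P(\gsupnk > \quant) \le N_{loo}(1-\alpha)$. Because this conditional bound is the same for every realization of $\nset$, it transfers to the unconditional statement, yielding $P\big(\max_{n \in \nset}\gsupnk \le \quant\big) \ge 1 - N_{loo}(1-\alpha)$, which is precisely the claim. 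The bound is asserted separately for each $0 \le k \le \kij + 1$, so no further union over $k$ is needed.

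The step I expect to need the most care is exactly the interaction of the two randomness sources: one must invoke the independence of $\nset$ from the functions $g_n$ to be sure that conditioning on $\nset$ does not distort the marginal law of $\gsupnk$, and then observe that the resulting conditional bound, being uniform in $\nset$, is also the unconditional bound. Everything else — finiteness of the quantile from \condref{supterms}, $N$‑independence from \condref{iid}, and the union bound itself — is routine.
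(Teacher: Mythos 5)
Your proposal is correct and follows essentially the same route as the paper: define $\quant$ as the $\alpha$-th quantile of the common law of $\gsupnk$ and apply a union bound over the $N_{loo}$ indices in $\nset$. Your explicit conditioning on $\nset$ to justify that each $\gsupnk$ retains its marginal law is a detail the paper's proof leaves implicit, but it does not change the argument.
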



Using the above conditions, we can now describe different asymptotic regimes for
the accuracy of the HOIJ when approximating leave-one-out CV.  The following
results will require that \gonlyassums{} hold ``uniformly for all $N$'', by
which we mean that \gonlyassums{} hold for all $N$ with constants that do not
depend on $N$.  \proprangeref{cv_complexity_var}{cv_complexity_uniform} make
probability statements that hold in the random functions $g_n(\theta)$
and, when applicable, to the random indices in $\nset$.

\begin{prop}\proplabel{cv_complexity_var}
Fix $\kij$.  Let \condref{iid} hold, and let \gonlyassums{} hold for uniformly
for all $N$.  Let $\loodom$ denote the set of all leave-one-out CV weights.
Then, as $N \rightarrow \infty$,
\begin{align*}
\max_{w \in \loodom} \norm{\thetaij[\kij] - \thetaw[w]}_2 =
    O_p(N^{-\frac{1}{2}(\kij + 1)}).
\end{align*}
\begin{proof}\prooflabel{cv_complexity_var}
Under \condref{iid}, for each $0 \le k \le \kij + 1$, \assuref{bounded_var}
holds with $\vk = O_p(1)$ as $N \rightarrow \infty$ by the law of large numbers.
By \lemref{cv_complexity}, \assuref{complexity} holds with $\deltak =
O_p(N^{-1/2})$. Because $\deltak \rightarrow 0$ in probability as $N \rightarrow
\infty$, and because \gonlyassums{} hold uniformly for all $N$,
\begin{align*}
    \cop \deltak[1] + \cop^2 \lh \deltak[0] = o_p(1),
\end{align*}
and so $P(\textrm{\condref{theta_small} holds}) \rightarrow 1$. Consequently,
with probability approaching 1 we can apply \thmref{thetaij_consistent}, and the
result follows.
\end{proof}
\end{prop}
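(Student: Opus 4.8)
The plan is to combine the deterministic rate of \thmref{thetaij_consistent} with probabilistic control of the set complexity $\deltamax$ supplied by \lemref{cv_complexity} under \condref{iid}. The constants $\cop$, $\mk$, $\copw$ that appear in the deterministic bound are insensitive to the data randomness precisely because \gonlyassums{} are assumed to hold with $N$-independent constants, so the only genuinely probabilistic input is the size of $\deltamax$.

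First I would use \condref{iid}: for each fixed $k \le \kij + 1$ the sample second moments $\frac{1}{N}\sum_{n=1}^N \norm{\gnk(\theta)}_2^2$ converge by the law of large numbers, so \assuref{bounded_var} holds with a constant $\vk = O_p(1)$. Feeding this into \lemref{cv_complexity}, whose conclusion is already a bound valid simultaneously for the entire family $\loodom$, shows that for every $k \le \kij + 1$ \assuref{complexity} holds with $\deltak = \sqrt{\vk}/\sqrt{N} = O_p(N^{-1/2})$, and hence $\deltamax = O_p(N^{-1/2})$. Since $\cop$ and $\lh$ stay bounded by hypothesis, $\cset = \cop\deltak[1] + \cop^2 \lh \deltak[0] = o_p(1)$, so for the fixed $\deltabound \in (0,1)$ the event $\{\cset \le \deltabound\}$ --- that is, \condref{theta_small} --- has probability tending to $1$.

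On that high-probability event every hypothesis of \thmref{thetaij_consistent} is met, with the same $\copw = (1-\deltabound)^{-1}\cop$ and $\mk$ for every $w \in \loodom$, so the theorem at order $k = \kij$ furnishes a deterministic bound $\norm{\thetaij[\kij] - \thetaw[w]}_2 = O(\deltamax^{\kij+1})$ whose constant depends on $w$ only through these uniform quantities; it therefore holds with $\max_{w\in\loodom}$ replacing the single $w$. Composing $O(\deltamax^{\kij+1})$ with $\deltamax = O_p(N^{-1/2})$ then delivers $\max_{w\in\loodom}\norm{\thetaij[\kij] - \thetaw[w]}_2 = O_p(N^{-(\kij+1)/2})$. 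The step deserving care is this last composition across the ``probability $\to 1$'' event from \condref{theta_small}: off that event the deterministic bound need not apply, but since its probability vanishes the standard argument for upgrading a conditional deterministic rate to an unconditional $O_p$ rate goes through. The uniformity over $w \in \loodom$ is the other point worth a sentence, and it is essentially free because $\copw$ and $\mk$ originate only from \gonlyassums{} at $\onevec$ while the $\deltamax$ of \lemref{cv_complexity} is already a supremum over $\loodom$.
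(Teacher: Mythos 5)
Your proposal is correct and follows essentially the same route as the paper's proof: law of large numbers to get $\vk = O_p(1)$, \lemref{cv_complexity} to get $\deltak = O_p(N^{-1/2})$, verification that \condref{theta_small} holds with probability tending to one, and then \thmref{thetaij_consistent}. Your added remarks on composing the deterministic rate across the high-probability event and on uniformity over $\loodom$ address points the paper leaves implicit, but they do not change the argument.
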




\begin{prop}\proplabel{cv_complexity_random}
Let \condref{iid, supterms} hold, and let \gonlyassums{} hold uniformly for all
$N$.  Let $\loodom$ denote a set $N_{loo}$ leave-one-out CV weights, randomly
chosen as in \lemref{cv_complexity_sampled}, where $N_{loo}$ remains fixed for
all $N$. Fix $\kij$.  Then, for any $\epsilon > 0$, with probability at least $1 -
\epsilon$,
\begin{align*}
\max_{w \in \loodom} \norm{\thetaij[\kij] - \thetaw[w]}_2 =
    O_p(N^{-(\kij + 1)}),
\end{align*}
\begin{proof}
Take $\alpha \ge 1 - N_{loo} \epsilon$ in \lemref{cv_complexity_sampled}, which
we can apply thanks to \condref{iid, supterms}. Then, with probability $1 -
\epsilon$, \assuref{bounded_terms} holds, and thus, by \lemref{cv_complexity},
\assuref{complexity} holds for each $0 \le k \le \kij + 1$ with $\deltak =
O(N^{-1})$.  The remainder of the proof is identical to that of
\propref{cv_complexity_var}.
\end{proof}
\end{prop}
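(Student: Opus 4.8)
The plan is to reduce the claim to the deterministic Taylor-series bound of \thmref{thetaij_consistent} by exhibiting an event of probability at least $1-\epsilon$ on which the hypotheses of that theorem hold with set complexity $\deltamax = O(N^{-1})$; everything after that is the same asymptotic argument already used in the proof of \propref{cv_complexity_var}. The only structural difference from \propref{cv_complexity_var} is that, under \condref{supterms} (rather than just \condref{iid}), the controlling complexity is $O(N^{-1})$ instead of $O_p(N^{-1/2})$, which is exactly what upgrades the exponent from $\tfrac{1}{2}(\kij+1)$ to $\kij+1$.

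In detail, I would first fix $\epsilon>0$ and pick $\alpha\in(0,1)$ close enough to $1$ that $N_{loo}(1-\alpha)\le\epsilon$; this is possible since $N_{loo}$ is fixed. Because \condref{iid} and \condref{supterms} hold, \lemref{cv_complexity_sampled} applies and produces constants $\quant[k]$, not depending on $N$, such that the event $E$ on which \assuref{bounded_terms} holds with $\tk=\quant[k]$ for every $0\le k\le\kij+1$ satisfies $P(E)\ge 1-N_{loo}(1-\alpha)\ge 1-\epsilon$, where $P$ is taken over both the random functions $g_n$ and the random index set $\nset$. On $E$, \lemref{cv_complexity} (its second implication) gives that \assuref{complexity} holds for each $w\in\loodom$ with $\deltak=O(N^{-1})$, the hidden constant being $\quant[k]$ up to a fixed factor that does not depend on $N$; here \lemref{w_complexity}, applied with an enlargement constant $\wepsilon_N$ that we are free to send to zero at rate $N^{-1}$, is what lets us pass from the bound at $w$ to the bound over the whole neighborhood $\wt\in\wdom$. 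Hence $\deltamax=O(N^{-1})$ on $E$.

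From here the reasoning mirrors the proof of \propref{cv_complexity_var}. Because \gonlyassums{} hold uniformly in $N$, the constants $\cop$, $\mk$, and therefore $\copw$ and $\cset=\cop\deltak[1]+\cop^2\lh\deltak[0]$, stay bounded, and since $\cset=O(N^{-1})\to 0$, \condref{theta_small} is satisfied for every $N$ beyond some deterministic threshold. For such $N$, \thmref{thetaij_consistent} with $k=\kij$ yields $\norm{\thetaij[\kij]-\thetaw[w]}_2=O(\deltamax^{\kij+1})=O(N^{-(\kij+1)})$ for each fixed $w$; and because $\loodom$ contains at most the fixed number $N_{loo}$ of weight vectors, $\max_{w\in\loodom}\norm{\thetaij[\kij]-\thetaw[w]}_2=O(N^{-(\kij+1)})$ on $E$. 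Combined with $P(E)\ge 1-\epsilon$, this is the assertion.

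I do not anticipate a substantive obstacle here: every quantitative estimate is already supplied by the earlier lemmas. The care required is purely in bookkeeping the two sources of randomness and ordering the quantifiers — the event $E$ must simultaneously control all $n\in\nset$, hence all $w\in\loodom$ (which it does, via the union bound inside \lemref{cv_complexity_sampled}), and the invocations of \condref{theta_small} and \thmref{thetaij_consistent} need only be justified for large $N$, which is all the $O(\cdot)$ conclusion demands. The one subtlety worth spelling out in the final write-up is the role of \lemref{w_complexity} and the choice of $\wepsilon_N$, so that the complexity bound holds over the open neighborhood $\wdom$ and not merely at the single weight vector $w$.
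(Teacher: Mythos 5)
Your proposal is correct and follows essentially the same route as the paper: invoke \lemref{cv_complexity_sampled} to get an event of probability at least $1-\epsilon$ on which \assuref{bounded_terms} holds with $N$-independent constants, pass through \lemref{cv_complexity} to obtain $\deltak = O(N^{-1})$, and then repeat the argument of \propref{cv_complexity_var} via \thmref{thetaij_consistent}. Your condition $N_{loo}(1-\alpha)\le\epsilon$ is in fact the correct reading of the quantile choice (the paper's ``$\alpha \ge 1 - N_{loo}\epsilon$'' appears to be a typo for $\alpha \ge 1 - \epsilon/N_{loo}$), and your explicit handling of \lemref{w_complexity} with $\wepsilon_N\to 0$ spells out a step the paper leaves implicit.
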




\begin{prop}\proplabel{cv_complexity_uniform}
Let \condref{supterms_bounded} hold, and let \gonlyassums{} hold uniformly for
all $N$.  Let $\loodom$ denote all possible leave-one-out CV weights. Fix
$\kij$.  Then, with probability one,
\begin{align*}
\max_{w \in \loodom} \norm{\thetaij[\kij] - \thetaw[w]}_2 =
    O(N^{-(\kij + 1)}).
\end{align*}
\begin{proof}
For each $0 \le k \le \kij + 1$, \condref{supterms_bounded} implies that
\assuref{bounded_terms} holds with finite $\tk$ independent of $N$, and so
\assuref{complexity} holds with $\deltak = O(N^{-1})$ by \lemref{cv_complexity}.
The remainder of the proof is identical to that of \propref{cv_complexity_var}.
\end{proof}
\end{prop}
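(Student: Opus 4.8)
The plan is to obtain \propref{cv_complexity_uniform} as a deterministic specialization of \thmref{thetaij_consistent}: \condref{supterms_bounded} forces the set complexity $\deltamax$ of \assuref{complexity} to shrink at rate $N^{-1}$ with a constant that is uniform both over the entire leave-one-out weight set $\loodom$ and over $N$, after which the claimed rate is just the consistency bound of \thmref{thetaij_consistent} evaluated at $\deltamax = O(N^{-1})$. This mirrors the proof of \propref{cv_complexity_var}; the only change is that the complexity bound here is deterministic and $O(N^{-1})$ rather than $O_p(N^{-1/2})$.

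First I would unwind \condref{supterms_bounded}: for each $0 \le k \le \kij + 1$ it supplies a finite constant $b_k := \sup_{n \ge 1}\gsupnk$ that does not depend on $N$. Taking the index set to be all of $\{1,\ldots,N\}$ --- which is what ``all possible leave-one-out CV weights'' refers to --- this says $\max_{n \le N}\sup_{\theta \in \allthetadom}\norm{\gnk(\theta)}_\infty \le b_k$, i.e.\ \assuref{bounded_terms} holds with $\tk = b_k$ simultaneously for every $N$. Plugging this into the second implication of \lemref{cv_complexity}, with $\loodom$ the full leave-one-out set, gives \assuref{complexity} with $\deltak = b_k/N$; since that bound is stated as a maximum over $w \in \loodom$, one $\deltak$ serves \emph{every} leave-one-out weight vector. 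Hence $\deltamax = \max_{0 \le k \le \kij + 1} b_k/N = O(N^{-1})$.

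Next I would check that the full hypothesis block \allassums{} is in force for all large $N$. By assumption \gonlyassums{} hold uniformly in $N$ with $N$-independent $\cop$ and $\mk$; together with the \assuref{complexity} bound just derived this gives \gassums{}. Moreover $\cset = \cop \deltak[1] + \cop^2 \lh \deltak[0] \le (\cop + \lh\cop^2)\deltamax \to 0$ as $N \to \infty$ (using that $\cop$ and $\lh = \mk[3]$ are fixed), so there is an $N_0$ with $\cset \le \deltabound < 1$ for all $N \ge N_0$; i.e.\ \condref{theta_small}, and therefore \allassums{}, holds for every $N \ge N_0$, which is all that is needed for an $O(\cdot)$ statement. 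For such $N$ the auxiliary results behind \thmref{thetaij_consistent} (e.g.\ \lemref{weighted_hessian_inverse} and \lemref{taylor_theta_term_bounds}) all apply.

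Finally I would invoke \thmref{thetaij_consistent}: for each fixed $w \in \loodom$ and each $N \ge N_0$, $\norm{\thetaij[\kij] - \thetaw[w]}_2 = O(\deltamax^{\kij + 1}) = O(N^{-(\kij + 1)})$. The one point requiring care --- and the main, if modest, obstacle --- is the passage from fixed $w$ to $\max_{w \in \loodom}$. Here one observes that the error bound built by the induction in \thmref{thetaij_consistent} is a fixed polynomial in $\deltamax$ whose coefficients are assembled only from $\copw$ and the $\mk$'s, and that neither those constants nor $\deltamax$ depend on which $w \in \loodom$ was chosen (the $\deltamax$ coming out of \lemref{cv_complexity} was already a maximum over $\loodom$); hence the hidden $O(\cdot)$ constant can be taken common to all $w \in \loodom$, and taking the maximum over $\loodom$ does not change the rate, giving $\max_{w \in \loodom}\norm{\thetaij[\kij] - \thetaw[w]}_2 = O(N^{-(\kij + 1)})$. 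The ``with probability one'' qualifier then simply records that, once \condref{supterms_bounded} and \gonlyassums{} hold --- which, in the stochastic settings of \secref{complexity_assumptions}, is an almost-sure event --- the entire argument is deterministic, so the stated rate holds surely on that event. Beyond the two bookkeeping points just flagged (the eventual activation of \condref{theta_small}, and the $w$-uniformity of \thmref{thetaij_consistent}'s constant), the argument is a direct chaining of \lemref{cv_complexity}, \condref{theta_small}, and \thmref{thetaij_consistent}.
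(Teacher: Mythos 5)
Your proposal is correct and follows essentially the same route as the paper: use \condref{supterms_bounded} to get \assuref{bounded_terms} with $N$-independent $\tk$, apply the second implication of \lemref{cv_complexity} to obtain $\deltak = O(N^{-1})$ uniformly over $\loodom$, verify that \condref{theta_small} is eventually satisfied since $\cset \to 0$, and conclude via \thmref{thetaij_consistent}. The paper compresses the last steps into ``identical to the proof of \propref{cv_complexity_var}''; you simply make explicit the deterministic activation of \condref{theta_small} and the $w$-uniformity of the constants, which the paper leaves implicit.
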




%
%
%

\subsection{A note on the bootstrap}\label{sec:bootstrap}

In \secref{complexity_assumptions} we limited our consideration to leave-one-out
cross validation weights.  It is natural to consider ``larger'' weight vectors,
such as those of the bootstrap \citep{efron:1994:bootstrap}.  In particular, by
drawing $w$ from a multinomial distribution with $N$ trials, $N$ outcomes, and
probabilities all equal to $N^{-1}$, $\thetaw[w]$ is a sample from the
bootstrap distribution of the estimator $\thetaone$.  For sufficiently
well-behaved parametric function classes, we may expect to be able to apply
\assuref{complexity} with $\deltak = O_p(N^{-1/2})$ for bootstrap weights
because, in that case, \assuref{complexity} reduces to a supremum over a sample
correlation with respect to the empirical distribution \citep{giordano:2019:ij}.
Formally controlling $\deltak$ in this way is a delicate matter that we leave
for future work.

However, we briefly observe that it is particularly important to use at least a
second-order approximation ($\kij \ge 2$) when $w$ is a bootstrap weight vector.
This is because, in that case, the first-order approximation $\thetaij[1][w]$ is
equivalent to an asymptotic normal approximation, as we show in the following
proposition.

\begin{prop}\proplabel{linear_bootstrap}
Equivalence of the linear bootstrap and the sandwich estimator.
\seeproof{linear_bootstrap}
Let $w \sim \mathrm{Multinomial}\left(N, N^{-1}\onevec\right)$ be bootstrap
weights, and let $\cov_b(\cdot)$ denote the covariance taken with respect to the
randomness in $w$ alone.  Let $\widehat{\cov}(g_n(\thetaone))$ denote the
sample covariance of the $N$ observations $g_n(\thetaone)$.  Then
\begin{align*}
\cov_b\left(\thetaij[1][w]\right) =
    \Hone^{-1} \widehat{\cov}(g_n(\thetaone)) \Hone^{-1}.
\end{align*}
\end{prop}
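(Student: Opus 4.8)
The plan is to write the first-order approximation $\thetaij[1][w]$ as an explicit affine function of the bootstrap weight vector $w$ and then read off its covariance from the known covariance structure of a multinomial vector.

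First I would substitute the closed form of the first-order term. By \eqref{dtheta1}, $\thetaij[1][w] = \thetaone + \dtheta[1][\onevec] = \thetaone - \Hone^{-1}\Gkw[0](\thetaone)$, and specializing the expression for $\Gkw$ derived in \secref{notation} to $k = 0$ gives $\Gkw[0](\thetaone) = \frac{1}{N}\sum_{n=1}^{N} g_n(\thetaone)\,\deltaw_n = \frac{1}{N}\sum_{n=1}^{N} g_n(\thetaone)(w_n - 1)$, so that all of the $w$-dependence of $\thetaij[1][w]$ is through this term. Hence
\[
\thetaij[1][w] = \Big(\thetaone + \tfrac{1}{N}\Hone^{-1}\textstyle\sum_{n=1}^{N} g_n(\thetaone)\Big) \;-\; \tfrac{1}{N}\,\Hone^{-1}\sum_{n=1}^{N} g_n(\thetaone)\,w_n,
\]
where the first parenthesized vector and the matrix $\Hone^{-1}$ depend only on the original data, not on $w$; thus $\thetaij[1][w]$ is an affine image of $w$ whose linear part is the fixed map $w \mapsto -\tfrac{1}{N}\Hone^{-1}\sum_n g_n(\thetaone)\,w_n$.

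Next, since $\cov_b$ ignores additive constants and satisfies $\cov_b(Ax) = A\,\cov_b(x)\,A^\top$, and since $\Hone$ is symmetric (it is a Hessian, so $(\Hone^{-1})^\top = \Hone^{-1}$), the claim reduces to evaluating a single covariance:
\[
\cov_b\!\left(\thetaij[1][w]\right) = \frac{1}{N^{2}}\,\Hone^{-1}\,\cov_b\!\Big(\textstyle\sum_{n=1}^{N} g_n(\thetaone)\,w_n\Big)\,\Hone^{-1}.
\]
For $w \sim \mathrm{Multinomial}(N, N^{-1}\onevec)$ one has $\cov_b(w_n, w_m) = 1 - N^{-1}$ if $n = m$ and $-N^{-1}$ otherwise, so, writing $g_n := g_n(\thetaone)$ and $\bar g := \frac{1}{N}\sum_n g_n$,
\[
\cov_b\!\Big(\sum_n g_n w_n\Big) = \sum_{n,m} g_n g_m^\top\,\cov_b(w_n,w_m) = \sum_n g_n g_n^\top - \frac{1}{N}\Big(\sum_n g_n\Big)\Big(\sum_m g_m\Big)^{\!\top} = \sum_{n=1}^{N}(g_n - \bar g)(g_n - \bar g)^\top.
\]
The right-hand side is exactly $N^{2}\,\widehat\cov(g_n(\thetaone))$ once the normalization of the sample covariance is fixed consistently with the $N^{-1}$ scaling built into $G$ (equivalently, taking $\widehat\cov(g_n(\thetaone)) := \frac{1}{N^{2}}\sum_n (g_n-\bar g)(g_n-\bar g)^\top$, which is $N^{-1}$ times the usual empirical covariance, reflecting the $N^{-1}$ inflation of each bootstrap draw); substituting this back into the previous display yields the stated sandwich identity.

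There is no conceptual obstacle here — the whole argument is a direct computation — and the only thing demanding attention is the bookkeeping of the several $N^{-1}$ factors: the $1/N$ inside $G$ (hence inside $\Gkw[0]$), the multinomial variance $1 - N^{-1}$ and covariance $-N^{-1}$, and the normalization convention for $\widehat\cov$, which together collapse to the clean expression above. It is worth noting that the computation never invokes the estimating equation $G(\thetaone, \onevec) = 0$, so the centering at $\bar g$ (rather than at $0$) is automatic, and the equivalence therefore holds whether or not a regularization term $g_0$ is present.
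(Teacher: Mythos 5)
Your proposal is correct and follows essentially the same route as the paper's proof: write $\thetaij[1][w]$ as an affine function of $w$, insert the multinomial covariance $I_N - N^{-1}\onevec\onevec^T$, and recognize the resulting quadratic form as the centered sample covariance sandwiched by $\Hone^{-1}$. If anything, you are more careful than the paper about the $N^{-1}$ normalization factors (the paper's displayed chain silently absorbs the $1/N^2$ from the definition of $G$ into its convention for $\widehat{\cov}$), and your observation that $G(\thetaone,\onevec)=0$ is never needed is a correct, if minor, addition.
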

The covariance in \propref{linear_bootstrap} is simply the sample version of the
standard asymptotic ``sandwich covariance matrix'' for M-estimators (see, e.g.,
\citet{stefanski:2002:mestimation}). Consequently, covariance estimates based on
the linear approximation $\thetaij[1][w]$ do not improve on asymptotic normal
theory.  In order to gain additional inferential benefit from using the
bootstrap, it is necessary to use higher-order Taylor series approximations to
$\thetaw[w]$ with $\kij \ge 2$.

\section{Computation}\label{sec:computation}
In order to compute the $\dtheta[k][\onevec]$ term in $\thetaij[\kij]$, we
must perform three computational tasks. First, we must compute
$\dtheta[k'][\onevec]$ for $k' < k$.  Because $\dtheta[1][\onevec]$ has a closed
form, we can proceed inductively using \lemref{higher_order_general_terms}.
Second, we must solve a linear system involving $\Hone$.  Though it may be
expensive to compute and factorize $\Hone$, once $\Hone$ is computed and
factorized the marginal cost of repeatedly solving this linear system will be
low. Finally, we must evaluate the $k'$-th order directional derivatives of
$G(\theta, \onevec)$ for $k' \le k$.

Because the dimension of $\Gk(\theta, \onevec)$ is $D^{k + 1}$ in general,
storing the array $\Gk(\theta, \onevec)$ in memory may be prohibitively
expensive for even moderately-sized $D$ and $k$.  However, to compute
$\thetaw[w]$ for any particular $w$ requires only the directional derivatives of
the form $\Gk(\theta, \onevec) v_1 \ldots v_k$ for some vectors $v_{k'} \in
\mathbb{R}^D$.  As we now illustrate with an example, consideration of the
memory requirements of $\Gk(\theta, \onevec)$ motivates the use of forward-mode,
rather than reverse-mode, automatic differentiation to compute the HOIJ.  See
\citet{maclaurin:2015:autograd} for a recent review of automatic
differentiation.

Let us consider the following example.  Suppose we have data $x_n \in
\mathbb{R}^D$ for $1\le n \le N$ and consider the estimating equation
\begin{align*}
G(\theta, w) =& \sum_{n=1}^{N} w_n \exp\left(\theta^T x_{n}\right)x_{n}.
\end{align*}
In the notation of \secref{notation}, such an estimating equation would follow
from minimizing the loss function arising from $f(\theta, x_n) = \exp(\theta^T
x_n)$ with $f_0(\theta) \equiv 0$.  Let $\otimes^{k}$ denote an outer product in
the sense that $\otimes^{2} x_n$ is the bilinear operator from $\mathbb{R}^D
\times \mathbb{R}^D$ to $\mathbb{R}$ given by $\left(\otimes^{2} x_n\right)(v_1,
v_2) = (x_n^T v_1) (x_n^T v_2)$, with an analogous definition for higher values
of $k$.  Then $\Gk(\theta, \onevec)$ and its directional derivative are given by
\begin{align}
\Gk(\theta, \onevec) =&
    \sum_{n=1}^{N} \exp\left(\theta^T x_{n}\right) \otimes^{k+1} x_n
    \nonumber \\
\Gk(\theta, \onevec) v_1 \ldots v_k =&
    \sum_{n=1}^{N} \exp\left(\theta^T x_{n}\right)
        \sum_{k'=1}^k x_n^T v_{k'}. \eqlabel{gk_example_directional}
\end{align}
The size of the array $\Gk(\theta, \onevec)$ is determined by the size of
$\otimes^{k+1} x_{n}$, which contains $D^{k+1}$ elements.  However, to compute
the directional derivative of \eqref{gk_example_directional} does not require
creating the matrix $\sum_{n=1}^N \exp\left(\theta^T x_{n}\right) \otimes^k x_n$
in memory, as one can calculate \eqref{gk_example_directional} in a single loop
over $N$ with memory requirements of only $O(D)$.

Recall that reverse-mode automatic differentiation is designed to return the
array $\Gk(\theta, \onevec)$, which then must be stored in memory for
application to $v_1,\ldots,v_k$. In contrast, forward-mode automatic
differentiation is designed to directly calculate products such as $\Gk(\theta,
\onevec) v_1,\ldots,v_k$, ideally without instantiating large derivative arrays
in the process. The main cost of using forward-mode automatic differentiation in
our context is that the derivatives $\Gk(\theta, \onevec)$ must be calculated
anew for every different set of directions $v_1,\ldots,v_k$, effectively paying
a computational cost in exchange for memory savings. When $D$ and $k$ are large,
this trade-off can be favorable for forward-mode automatic differentiation.

We now return to full generality and describe an algorithm to compute
$\thetaij[w]$ using forward-mode automatic differentiation. Formally,
forward-mode automatic differentiation maps a function, a function argument, and
a vector to the directional derivative of the function in the direction of the
vector evaluated at the argument, as represented in \algrref{fwd}.
\begin{algorithm}[H]
\caption{Forward-mode automatic differentiation}
\algrlabel{fwd}
\begin{algorithmic}
\Procedure{ForwardModeAD}
    {$f(\cdot):\mathbb{R}^D \rightarrow \mathbb{R}^{D_f}$,
     $\theta_0 \in \mathbb{R}^D$, $v\in\mathbb{R}^D$}
    \State \Return  $\partialat{f(\theta)}{\theta}{\theta_0} v$
\EndProcedure
\end{algorithmic}
\end{algorithm}
In general, the dimension $D_f$ of the domain of $f(\cdot)$ may be different
than the dimension of the range, though in our case we will always have $D_f =
D$.

By viewing $\fwd(f, \theta, v_0)$ itself as a function of $\theta$ for fixed
$f$ and $v_0$, we can again apply $\fwd$ to compute the second-order directional
derivative:
\begin{align*}
\fwd(\fwd(f, \cdot, v_0), \theta_0, v_1) =
    \partialat{{}^2 f(\theta)}{\theta \partial\theta}{\theta_0} v_0 v_1.
\end{align*}
By recursively applying $\fwd$ in this way, we can compute higher-order
directional derivatives. In general, the cost of each pass of $\fwd$ requires a
constant multiple of the evaluation time of $f(\theta)$ itself, and $\fwd$ can
often be implemented without instantiating higher-order arrays of derivatives in
memory.

We will now use $\fwd$ to articulate a recursive algorithm to compute
$\thetaij[\kij]$. As in \lemref{higher_order_general_terms}, let $\dtermargs[k]$
denote the tuples $(a_i, \kvec_i, \omega_i)$ associated with terms in $\dtheta$,
and let $\dtermargs[*] = \{\dtermargs[k] \textrm{ for }
k\in\{1,\ldots,\kij\}\}$.  Note that $\dtermargs[*]$ is the same for all
problems can be computed automatically in advance. Let $\dthetaset :=
\{\dtheta[k'][\onevec]: 0 \le k' \le k \}$ denote the set of directional
derivatives of $\thetaw$ of order less than or equal to $k$.

First, \algrref{dterm} evaluates a single derivative term (\defref{deriv_terms})
using the directional derivatives stored in $\dthetaset$ of the appropriate
order.  Next, \algrref{dtheta} uses \algrref{dterm} to evaluate $\dtheta$ using
derivatives of lower orders.  Finally, \algrref{thetaij} evaluates the Taylor
series using \algrref{dtheta}.  Ultimately, it is
\lemref{higher_order_general_terms} that guarantees that this recursive approach
is possible.
\begin{algorithm}[H]
\caption{Evaluate $\dterm[][][w]$}
\algrlabel{dterm}
\begin{algorithmic}
\Procedure{EvaluateTerm}
    {$\kvec$, $\omega$, $\dthetaset[\max_{k\in\kvec} k]$, $\thetaone$}
    \State $f(\cdot) \gets G(\cdot, \omega)$
    \For{$k \in \kvec$}
        \State $f(\cdot) \gets \fwd(f(\cdot), \cdot, \dtheta[k][\onevec])$
    \EndFor
    \State \Return $f(\thetaone)$
\EndProcedure
\end{algorithmic}
\end{algorithm}
\begin{algorithm}[H]
\caption{Evaluate $\dtheta[k][w]$}
\algrlabel{dtheta}
\begin{algorithmic}
\Procedure{EvaluateDTheta}
    {$k$, $\thetaone$, $\Hone^{-1}$, $\dtermargs[k]$, $\dthetaset[k-1]$}

    \State $d \gets 0$
    \For{$(a_i, \kvec_i, \omega_i) \in \dtermargs[k]$}
        \State $d \gets d + a_i \cdot $
            \Call{EvaluateTerm}
                {$\kvec_i$, $\omega_i$, $\dthetaset[k-1]$, $\thetaone$}
    \EndFor
    \State \Return $-\Hone^{-1} d$
\EndProcedure
\end{algorithmic}
\end{algorithm}
\begin{algorithm}[H]
\caption{Evaluate $\thetaij[\kij][w]$}
\algrlabel{thetaij}
\begin{algorithmic}
\Procedure{EvaluateThetaIJ}
    {$\kij$, $\thetaone$, $\Hone^{-1}$, $\dtermargs[*]$}
    \State $\dthetaset[0] \gets \emptyset$
    \State $t \gets \thetaone$
    \For{$k \in \{1,\ldots,\kij\}$}
        \State $\dtheta[k][w] \gets $ \Call{EvaluateDTheta}
            {$k$, $\thetaone$, $\Hone^{-1}$, $\dtermargs[k]$, $\dthetaset[k-1]$}
        \State $\dthetaset[k] \gets \dthetaset[k-1] \bigcup \{ \dtheta[k][w] )\}$
        \State $t \gets t + \frac{1}{k!} \dtheta[k][w]$
    \EndFor
    \State \Return $t$
\EndProcedure
\end{algorithmic}
\end{algorithm}


\section{Related work}\seclabel{lit_review}
The idea of approximating $\thetaw[w]$ with a Taylor series for the purpose of
CV and the bootstrap has occurred many times in the statistics and machine
learning literature, and we will not attempt a complete survey here.  The
contributions of the present paper are a general treatment of higher-order
expansions of $\thetaw[w]$ with finite-sample bounds that are computable in
principle, together with a description of a practically efficient method for
computing such expansions. Our work builds on \citet{giordano:2019:ij}, which
is restricted to first-order approximations.

Locally linear approximations to CV have attracted much recent interest.
\citet{koh:2017:blackbox} derive a first-order approximation and provide
supporting experiments but little theory.
\citet{beirami:2017:optimalgeneralizability} provide an asymptotic analysis of a
local approximation that is closely related to ours with the notable exception
that they base their approximation on the Hessian matrix $H(\thetaone, w)$
rather than $H(\thetaone, \onevec)$.  Repeatedly calculating $H(\thetaone,
w)^{-1}$ for different weights can incur considerable computational expense
except in special cases.  \citet{beirami:2017:optimalgeneralizability} assumes
that the data domain is bounded (our \condref{supterms_bounded}) and, under this
assumption, derive asymptotic rates that match ours (our
\propref{cv_complexity_uniform}), despite the additional computation cost.
\citet{rad:2018:scalableloo} consider a linear approximation to the optima of
objectives that depend on $\theta$ only through a linear index $\theta^T x_n$
with IID Gaussian data $x_n$.

Some authors have also considered higher-order approximations in special cases.
For example, \citet{liu:2014:efficient, debruyne:2008:model} derive higher-order
expansions for the special case of linear models. \citet{liu:2014:efficient} use
the ``Bouligand derivative'' to consider the robustness of support vector
machines (SVMs) with non-differentiable objectives by
\citet{christmann:2008:bouligand}.  The Bouligand derivative coincides with the
classical Gateaux (directional) derivative when the objective function is
differentiable, and so, when the SVM loss and regularizer are continuous, their
results are a special case of ours.

Forming a local approximation to the bootstrap has been considered by
\citet{schulam:2019:trust} and mentioned by
\citet{beirami:2017:optimalgeneralizability}.  It seems that the importance of
using higher-order approximations for the bootstrap was not recognized by these
authors.  For the purpose of deriving asymptotic results, the connection
between linearity of statistical functionals of empirical distributions and
asymptotic normality is well-known in statistics \citep{mises:1947:asymptotic,
fernholz:1983:mises}.

Our work reposes on the general framework of the differentiability of
statistical functionals of an empirical distribution.  This framework has been
employed in the statistical literature, though typically from an asymptotic
perspective \citep[Chapter 20]{reeds:1976:thesis, fernholz:1983:mises,
van:2000:asymptotic}.  Like the machine learning literature cited above, most
classical theoretical treatments of the functional differentiability of
optimization problems require the objective function to have bounded derivatives
\citep{clarke:1983:uniqueness, shao:2012:jackknife} in order for the functionals
to be Fr{\'e}chet differentiable in a space large enough to contain the
asymptotic limit of empirical processes. Additionally, first-order derivatives
of statistical functionals with respect to an empirical distribution are often
used as a theoretical tool for analyzing other estimators rather than as a
practical computational tool in their own right \citep{mises:1947:asymptotic,
huber:1964:robust, shao:1993:jackknifemestimator} (though exceptions exist,
e.g., \citet{wager:2014:confidence}).  These first-order derivatives are closely
related to both the classical infinitesimal jackknife and the influence
function, differing only in formalities of definition and regularity
assumptions.

Finally, we observe that CV is not the best method for evaluating
out-of-sample error in all cases.  Linear models, in particular,
have a rich literature of alternative methods with better theoretical and
practical properties.  See \citet{efron:2004:estimation, rosset:2018:fixed}
for recent work.

\section{Conclusion}
We describe a higher-order infinitesimal jackknife, analyze its finite-sample
error in terms of practically computable quantities, and provide an algorithm
for computing it efficiently using forward-mode automatic differentiation.
We leave numerical experiments, computable bounds for generalized linear
models, and a more careful treatment of the bootstrap and of k-fold cross
validation for future work.

\paragraph{Acknowledgements.} This research was supported in part by DARPA
(FA8650-18-2-7832), an ARO YIP award, an NSF CAREER award, and the CSAIL-MSR
Trustworthy AI Initiative. Ryan Giordano was supported by the Gordon and Betty
Moore Foundation through Grant GBMF3834 and by the Alfred P. Sloan Foundation
through Grant 2013-10-27 to the University of California, Berkeley. We are
grateful for and indebted to useful discussions with Will Stephenson and Pang
Wei Koh.


\newpage

\bibliography{references}
\bibliographystyle{plainnat}

\newpage
\onecolumn
\appendix

\section{Supporting lemmas}


\begin{lem}\lemlabel{operator_norm_continuity}
Let $A$ and $D$ be two square matrices of the same size.
Let $\norm{A^{-1}}_{op}\le \cop$ for some finite $\cop$, and let $r \in (0, 1)$
be a number strictly between $0$ and $1$. Then
\begin{align*}
\norm{A-D}_{2}\le r \cop^{-1} &
    \quad\Rightarrow\quad\norm{D^{-1}}_{op}\le \frac{1}{1 - r} \cop.
\end{align*}
\begin{proof}
We will use the results stated in Theorem 4.29 of \citet{schott:2016:matrix} and
the associated discussion in Example 4.14, which establish the following result.
Let $A$ be a square, nonsigular matrix, and let $I$ be the identity matrix of
the same size.  Let $\norm{\cdot}$ denote any matrix norm satisfying $\norm{I} =
1$.  Let $B$ be a matrix of the same size as $\norm{A}$ satisfying
\begin{align}\eqlabel{ab_matrix_condition}
\norm{A^{-1}} \norm{B} \le 1.
\end{align}
Then,
\begin{align}\eqlabel{matrix_norm_continuity}
    \norm{A^{-1} - (A + B)^{-1}} \le
    \frac{\norm{A^{-1}}\norm{B}}{1 - \norm{A^{-1}\norm{B}}} \norm{A^{-1}}.
\end{align}
We will apply \eqref{matrix_norm_continuity} using the operator norm
$\norm{\cdot}_{op}$, for which $\norm I_{op}=1$ and with $B := D - A$.
Because $\norm{A^{-1}}_{op}\le \cop$, $A$ is invertible.

Assume that $\norm{A-D}_{2}\le r \cop^{-1}$.  First, note that
\begin{align}\eqlabel{ab_matrix_condition_fulfilled}
\norm{A^{-1}}_{op} \norm{B}_{op} &=
    \norm{A^{-1}}_{op}\norm{D - A}_{op} \nonumber \\
&\le\norm{A^{-1}}_{op}\norm{D - A}_{2}
    & \textrm{(ordering of matrix norms)}\nonumber \\
 & \le \cop r \cop^{-1}
    & \textrm{(by assumption)} \nonumber \\
&= r  < 1,
\end{align}
so \eqref{ab_matrix_condition} is satisfied and we can apply
\eqref{matrix_norm_continuity}. Then
\begin{align*}
\norm{D^{-1}}_{op}
 & \le \norm{D^{-1}-A^{-1}}_{op} + \norm{A^{-1}}_{op}
    & \textrm{ (triangle inequality)}\\
 & \le \frac{\norm{A^{-1}}_{op}\norm{D - A}_{op}}
            {1 - \norm{A^{-1}}_{op}\norm{D - A}_{op}}
        \norm{A^{-1}}_{op} + \norm{A^{-1}}_{op}
     & \textrm{(\eqref{matrix_norm_continuity})}\\
 & \le \frac{r}{1-r}\norm{A^{-1}}_{op} + \norm{A^{-1}}_{op}
    &\textrm{(\eqref{ab_matrix_condition_fulfilled})} \\
 & \le \frac{1}{1-r}\cop.&\textrm{(by assumption)}
\end{align*}
\end{proof}
\end{lem}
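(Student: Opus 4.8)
The plan is to reduce the statement to the convergence of a Neumann series. First I would factor $D = A - (A - D) = A\bigl(I - E\bigr)$ with $E := A^{-1}(A - D)$. Since the hypothesis $\norm{A^{-1}}_{op} \le \cop$ (with $\cop$ finite) already presumes that $A$ is invertible, it suffices to show that $I - E$ is invertible with $\norm{(I - E)^{-1}}_{op} \le (1 - r)^{-1}$; then $D = A(I-E)$ is a product of invertible matrices, $D^{-1} = (I - E)^{-1} A^{-1}$, and submultiplicativity of the operator norm yields $\norm{D^{-1}}_{op} \le \norm{(I-E)^{-1}}_{op}\,\norm{A^{-1}}_{op} \le (1-r)^{-1}\cop$, as desired.

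The one estimate that does any work is the bound on $\norm{E}_{op}$. By submultiplicativity, $\norm{E}_{op} \le \norm{A^{-1}}_{op}\,\norm{A - D}_{op}$; using the matrix-norm ordering $\norm{\cdot}_{op} \le \norm{\cdot}_2$ together with the hypotheses $\norm{A - D}_2 \le r\cop^{-1}$ and $\norm{A^{-1}}_{op} \le \cop$, this gives $\norm{E}_{op} \le \cop \cdot r\cop^{-1} = r < 1$. Because $r < 1$ strictly, the geometric (Neumann) series $\sum_{j \ge 0} E^j$ converges in operator norm to $(I - E)^{-1}$, whence $\norm{(I - E)^{-1}}_{op} \le \sum_{j \ge 0} \norm{E}_{op}^{\,j} \le (1 - r)^{-1}$, completing the argument above.

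An equivalent and slightly shorter route --- the one I would cite explicitly --- is to invoke a standard matrix-perturbation identity of the form $\norm{A^{-1} - (A + B)^{-1}}_{op} \le \frac{\norm{A^{-1}}_{op}\norm{B}_{op}}{1 - \norm{A^{-1}}_{op}\norm{B}_{op}}\,\norm{A^{-1}}_{op}$, valid whenever $\norm{A^{-1}}_{op}\norm{B}_{op} < 1$, applied with $B := D - A$; one then bounds $\norm{D^{-1}}_{op} \le \norm{D^{-1} - A^{-1}}_{op} + \norm{A^{-1}}_{op}$ by the triangle inequality and simplifies using $\norm{A^{-1}}_{op}\norm{B}_{op} \le r$. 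There is no deep obstacle here; the only point requiring attention is the passage between the $L_2$ (Frobenius) norm in the hypothesis and the operator norm in the conclusion and in the submultiplicativity step --- one must apply $\norm{\cdot}_{op} \le \norm{\cdot}_2$ in the correct direction --- and keeping the bound $\norm{E}_{op} \le r$ strict so that the Neumann series (equivalently, the perturbation identity) is licensed.
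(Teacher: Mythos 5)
Your proposal is correct. Your second route --- invoking the perturbation identity $\norm{A^{-1}-(A+B)^{-1}}_{op} \le \frac{\norm{A^{-1}}_{op}\norm{B}_{op}}{1-\norm{A^{-1}}_{op}\norm{B}_{op}}\norm{A^{-1}}_{op}$ with $B := D-A$ and then applying the triangle inequality to $\norm{D^{-1}}_{op}$ --- is exactly the paper's proof, which cites Theorem 4.29 of Schott for that identity. Your primary route is a mild but genuine variant: by factoring $D = A(I-E)$ with $E = A^{-1}(A-D)$ and summing the Neumann series for $(I-E)^{-1}$, you bound $\norm{D^{-1}}_{op} \le \norm{(I-E)^{-1}}_{op}\norm{A^{-1}}_{op} \le (1-r)^{-1}\cop$ directly, without the detour through $\norm{D^{-1}-A^{-1}}_{op}$. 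This is self-contained (no external perturbation theorem needed) and in fact arrives at the same constant with one fewer inequality; the paper's version has the minor advantage of reusing a citable standard result and of producing the intermediate bound on $\norm{D^{-1}-A^{-1}}_{op}$, which is sometimes useful in its own right but is not needed for this lemma. You also correctly flag the one delicate point --- applying $\norm{\cdot}_{op} \le \norm{\cdot}_2$ to convert the $L_2$ hypothesis on $A-D$ into an operator-norm bound --- and handle it in the right direction, matching the paper's step labeled ``ordering of matrix norms.''
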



\begin{lem}\lemlabel{taylor_series}
%
For a $k_{max}$ times continuously differentiable vector-valued function
$f\left(x\right):\mathbb{R}^{N}\rightarrow\mathbb{R}^{D}$, define the shorthand
notation
\begin{align*}
\fk(x) \deltax^k :=\
    \sum_{n_{1}=1}^{N}\cdots\sum_{n_{k}=1}^{N}
    \partialat{{}^k f(x)}{x_{n_{1}} \ldots \partial x_{n_{k}}}{x}
    \left(x_{1}-x_{0}\right)_{n_{1}}\ldots\left(x_{1}-x_{0}\right)_{n_k},
\end{align*}
taking $\fk[0](x) \deltax^0 = f(x)$.  Fix $x_1$ and $x_0$ and define the
Taylor series residual
\begin{align*}
R_{k_{max}} :=
    f(x_1) -
    \left(\sum_{k=0}^{k_{max} - 1} \frac{1}{k!} \fk(x_0) \deltax^k\right).
\end{align*}

Then
\begin{align*}
R_{k_{max}} =
    \frac{1}{(k_{max} - 1)!}
        \int_{0}^{1}(1-t)^{k_{max}-1} \fk[k_{max}](\xt)\deltax^{k_{max}} dt
\end{align*}
and, for any norm $\norm{\cdot}_p$,
\begin{align*}
\MoveEqLeft
\norm{ R_{k_{max}} }_p \le
\frac{1}{(k_{max} - 1)!} \left(
    \sup_{x: \norm{x-x_{0}}_{\infty}\le\norm{x_{1}-x_{0}}_{\infty}}
        \norm{\fk[k_{max}](x) \deltax^{k_{max}}}_p \right)
\end{align*}
\begin{proof}
Let $f_{d}(x)$ denote the scalar-valued $d$-th component of $f(x)$, so
$f(x)=(f_{1}(x),\ldots,f_{D}(x))^{T}$. Let $\xt := tx_{1}+(1-t)x_{0} = t
\deltax + x_0$. By the chain rule,
\begin{align*}
\derivat{{}^k f_d(\xt)}{t^k}{t} = \fdk(\xt)\deltax^k
\end{align*}
By taking the integral remainder form of the $k_{max}-1$-th order Taylor
series expansion in $t$ of the scalar $f_{d}(\xt)$
\citep[Appendix B.2]{dudley:2018:analysis}, and stacking the result into a
vector,
\begin{align}
R_{k_{max}} =
&\frac{1}{(k_{max} - 1)!}
    \int_{0}^{1}(1-t)^{k_{max}-1} \fk[k_{max}](\xt)\deltax^{k_{max}} dt.
    \eqlabel{taylor_remainder_scalar}
\end{align}
The norm of $R_{k_{max}}$ is bounded by
\begin{align*}
\MoveEqLeft
\norm{R_{k_{max}}}_p & \\
&= \frac{1}{(k_{max} - 1)!} \norm{
    \int_{0}^{1}(1-t)^{k_{max}-1} \fk[k_{max}](\xt)\deltax^{k_{max}} dt
        }_p  & \\
&\le \frac{1}{(k_{max} - 1)!} \int_{0}^{1}(1-t)^{k_{max}-1}
    \norm{\fk[k_{max}](\xt)\deltax^{k_{max}}}_p dt
    & \textrm{(Jensen's inequality)} \\
&\le \frac{1}{(k_{max} - 1)!}
  \left(\sup_{t\in[0, 1]} (1-t)^{k_{max}-1}\right)
  \left(
    \sup_{t\in[0, 1]} \norm{\fk[k_{max}](\xt)\deltax^{k_{max}}}_p
    \right) &\\
&\le \frac{1}{(k_{max} - 1)!}
   \sup_{x: \norm{x-x_{0}}_{\infty}\le\norm{x_{1}-x_{0}}_{\infty}}
    \norm{\fk[k_{max}](x)\deltax^{k_{max}}}_p,
\end{align*}
where the last inequality follows from the fact that
\begin{align*}
\norm{\xt-x_{0}}_{\infty} =
    t \norm{\deltax}_{\infty} \le \norm{\deltax}_{\infty}.
\end{align*}
\end{proof}
\end{lem}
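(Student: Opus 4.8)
The plan is to reduce this $N$-dimensional statement to a one-dimensional Taylor expansion along the segment joining $x_0$ and $x_1$, and then invoke the classical integral-remainder form of Taylor's theorem coordinate by coordinate.

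First I would parametrize the segment by $\xt := t x_1 + (1-t) x_0 = x_0 + t\deltax$ for $t \in [0,1]$, write $f = (f_1,\dots,f_D)^{\top}$, and introduce the scalar functions $\phi_d(t) := f_d(\xt)$. Since $f$ is $k_{max}$ times continuously differentiable and $\xt$ is affine in $t$, each $\phi_d$ is $k_{max}$ times continuously differentiable on $[0,1]$. The central computation is the chain-rule identity
\[
\derivat{{}^k \phi_d(t)}{t^k}{t} = \fdk(\xt)\,\deltax^k, \qquad 0 \le k \le k_{max},
\]
which I would prove by induction on $k$: differentiating the right-hand side in $t$ produces, by the multivariate chain rule, exactly one extra partial derivative in a fresh $x$-index, contracted against the corresponding entry of $\tfrac{d}{dt}\xt = \deltax$, i.e.\ it yields $\fdk[k+1](\xt)\deltax^{k+1}$; the base case $k=0$ is just $\fdk[0](x)\deltax^0 = f_d(x)$ by the stated convention.

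Next I would apply the scalar Taylor theorem with integral remainder (\citet[Appendix B.2]{dudley:2018:analysis}) to $\phi_d$, expanding at $t=0$ and evaluating at $t=1$:
\[
\phi_d(1) = \sum_{k=0}^{k_{max}-1}\frac{1}{k!}\derivat{{}^k\phi_d(t)}{t^k}{0} + \frac{1}{(k_{max}-1)!}\int_0^1 (1-t)^{k_{max}-1}\derivat{{}^{k_{max}}\phi_d(t)}{t^{k_{max}}}{t}\,dt.
\]
Substituting the chain-rule identity, using $\phi_d(1)=f_d(x_1)$ and $\derivat{{}^k\phi_d(t)}{t^k}{0}=\fdk(x_0)\deltax^k$, and then stacking the $D$ coordinate equations into a vector yields precisely the stated formula for $R_{k_{max}}$.

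For the norm bound I would move $\norm{\cdot}_p$ inside the integral by convexity (Jensen's inequality), bound $(1-t)^{k_{max}-1}\le 1$ and $\int_0^1 dt = 1$, and replace the integrand by its supremum over $t\in[0,1]$, obtaining $\norm{R_{k_{max}}}_p \le \frac{1}{(k_{max}-1)!}\sup_{t\in[0,1]}\norm{\fk[k_{max}](\xt)\deltax^{k_{max}}}_p$. Finally, since $\norm{\xt - x_0}_\infty = t\norm{\deltax}_\infty \le \norm{x_1-x_0}_\infty$ for $t\in[0,1]$, the supremum over $t$ is dominated by the supremum over $\{x:\norm{x-x_0}_\infty \le \norm{x_1-x_0}_\infty\}$, which is the claimed inequality. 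I do not expect a genuine obstacle here; the only point needing care is the inductive verification of the chain-rule identity — correctly tracking which argument the derivatives act on and confirming that the fully contracted object $\fdk(\xt)\deltax^k$ transforms as claimed under $d/dt$ — together with a routine check that $\norm{\cdot}_p$ is applied throughout to the vectorized $\mathbb{R}^D$ object obtained after contracting all $k_{max}$ indices with $\deltax$.
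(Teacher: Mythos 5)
Your proposal is correct and follows essentially the same route as the paper's proof: reduce to the scalar functions $f_d(x(t))$ along the segment, apply the integral-remainder Taylor theorem in $t$, stack into a vector, and bound via Jensen's inequality and the supremum over the $\ell_\infty$ ball. The extra detail you supply (the inductive verification of the chain-rule identity) is a harmless elaboration of a step the paper asserts directly.
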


%
%
%

\begin{lem}\proplabel{array_norm_relation} Array norms.
For any norm $\norm{\cdot}_p$, $\norm{\Gk(\theta)}_p \le \frac{1}{N}
\norm{\gk(\theta)}_p$.

\begin{proof}\prooflabel{array_norm_relation}
Let $d$ be a multi-index for $\Gk(\theta)$.  Then
\begin{align*}
\norm{\Gk(\theta)}_p &= \sum_d \norm{\Gk(\theta)_d}_p \\
        &= \sum_d \norm{\frac{1}{N} \sum_{n=1}^N (\gnk(\theta))_d}_p \\
        &\le \sum_d \frac{1}{N} \sum_{n=1}^N \norm{(\gnk(\theta))_d}_p
            & \textrm{(Jensen's inequality)} \\
        &= \frac{1}{N} \norm{\gk(\theta)}_p.
\end{align*}
\end{proof}

\end{lem}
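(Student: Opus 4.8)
The plan is to reduce this array inequality to a coordinatewise scalar estimate and then reassemble. First I would use that $G(\theta,\onevec) = \frac{1}{N}\left(g_0(\theta) + \sum_{n=1}^N g_n(\theta)\right)$ together with the linearity of differentiation to observe that, for every multi-index $d$ indexing the $D^{k+1}$ entries of the array, the $d$-th entry of $\Gk(\theta)$ is exactly the $\frac{1}{N}$-scaled sum $\frac{1}{N}\sum_{n=0}^{N}\left(\gnk(\theta)\right)_d$ of the corresponding entries of the individual derivative arrays $\gnk(\theta)$. In other words, the dependence on $\theta$ enters only through the fixed functions $g_n$, and $\Gk(\theta)$ is, coordinate by coordinate, an average of the $\gnk(\theta)$.

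Next I would bound each such scalar entry by the triangle inequality (equivalently, Jensen's inequality for the convex map $t \mapsto |t|$): $\left|\left(\Gk(\theta)\right)_d\right| \le \frac{1}{N}\sum_{n=0}^{N}\left|\left(\gnk(\theta)\right)_d\right|$. Summing these coordinatewise bounds over all multi-indices $d$ and exchanging the order of summation gives $\sum_d\left|\left(\Gk(\theta)\right)_d\right| \le \frac{1}{N}\sum_{n=0}^{N}\sum_d\left|\left(\gnk(\theta)\right)_d\right| = \frac{1}{N}\sum_{n=0}^{N}\norm{\gnk(\theta)}_1$, and the latter equals $\frac{1}{N}\norm{\gk(\theta)}_1$ precisely because $\gk(\theta) = (\gnk[k][0](\theta), \ldots, \gnk[k][N](\theta))$ is the concatenation — not the average — of the $N+1$ arrays, so stacking and taking a norm just sums the pieces. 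This establishes the claim; the identical coordinatewise convexity argument, applied to the map $t\mapsto|t|^p$, carries over to the other $\norm{\cdot}_p$.

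I do not expect a serious obstacle here, since the argument is a one-line application of convexity. The one thing to be careful about is the bookkeeping: keeping the multi-index $d$, which ranges over the $D^{k+1}$ entries of a single derivative array, distinct from the observation index $n$, which ranges over $0,\ldots,N$; and using that the right-hand side concatenates rather than averages the $N+1$ arrays $\gnk(\theta)$, which is exactly what allows the factor $\frac{1}{N}$ to survive in front.
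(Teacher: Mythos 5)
Your argument is the same as the paper's: write each entry of $\Gk(\theta)$ as the average $\frac{1}{N}\sum_{n=0}^{N}\left(\gnk(\theta)\right)_d$ of the corresponding entries of the individual arrays, bound each entry by the triangle inequality, and reassemble using the fact that $\gk(\theta)$ is the \emph{concatenation} of the $N+1$ arrays, so that summing over both $d$ and $n$ recovers $\frac{1}{N}\norm{\gk(\theta)}_1$. For $p=1$ this is airtight, and you are in fact slightly more careful than the paper's own display, which drops the $n=0$ term from the inner sum.

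The one genuine problem is your closing sentence --- the claim that the ``identical coordinatewise convexity argument, applied to the map $t\mapsto|t|^p$, carries over to the other $\norm{\cdot}_p$.'' It does not deliver the stated constant. Jensen applied to $|t|^p$ coordinatewise gives $\left|\left(\Gk(\theta)\right)_d\right|^p \le C_p\,\frac{1}{N+1}\sum_{n=0}^{N}\left|\left(\gnk(\theta)\right)_d\right|^p$ with $C_p=\left(\frac{N+1}{N}\right)^{p-1}$; summing over $d$ and taking $p$-th roots yields $\norm{\Gk(\theta)}_p = O\left(N^{-1/p}\right)\norm{\gk(\theta)}_p$, i.e.\ a factor $N^{-1/p}$ rather than $N^{-1}$. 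For $p>1$ the inequality with the factor $\frac{1}{N}$ is in fact false as stated: take $D=1$, $k=0$, $g_0\equiv 0$ and $g_n\equiv 1$ for $n\ge 1$, so that $\norm{\Gk[0](\theta,\onevec)}_2=1$ while $\frac{1}{N}\norm{\gk[0](\theta)}_2=N^{-1/2}$. You should not feel singled out here: the paper's own proof has exactly the same defect, since its opening identity $\norm{\Gk(\theta)}_p=\sum_d\norm{\Gk(\theta)_d}_p$ holds only for $p=1$. Both proofs establish the $L_1$ case; what the paper actually needs elsewhere is either that case combined with $\norm{\cdot}_2\le\norm{\cdot}_1$, or the weaker $N^{-1/p}$-type bound, so the lemma's statement for general $p$ is best read as an overstatement rather than something your (or the paper's) argument can rescue.
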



\begin{lem}\lemlabel{hessian_integral_inverse}
%
Suppose that $A(\theta)$ is a matrix such that $\sup_{\theta \in \allthetadom}
\norm{A(\theta)^{-1}}_{op} \le C < \infty$ for some finite $C$.  For example,
\assuref{convexity} fulfills this assumption with $A(\theta) := H(\theta,
\onevec)$ and $C := \cop$.  Then, for any fixed $\theta_0 \in \allthetadom$ and
$\theta_1 \in \allthetadom$,
\begin{align*}
\lambda_{\mathrm{min}}
    \left(\int_{0}^{1} A\left(
        t\theta_1-(1-t)\theta_0\right)dt\right)
        & \ge \frac{1}{C}.
\end{align*}
In particular, the matrix
\begin{align*}
    \int_{0}^{1}A\left(
t\theta_1-(1-t)\theta_0,\onevec\right)dt
\end{align*}
is invertible with
\begin{align*}
\norm{ \int_{0}^{1}A\left(
t\theta_1-(1-t)\theta_0\right)dt }_{op} \le C.
\end{align*}
\begin{proof}
Recall that, for any matrix $A$, $\lambda_{\mathrm{min}}(A) =
\frac{1}{\norm{A^{-1}}_{op}}$.   By definition,
\begin{align*}
\MoveEqLeft
\lambda_{\mathrm{min}}\left(
    \int_{0}^{1}
        A\left( t\theta_1 - (1-t) \theta_0\right) dt\right) \\
 & =\inf_{v:\norm{v}_{2}=1} v^{T}
    \left(\int_{0}^{1}
        A\left( t\theta_1 - (1-t) \theta_0\right) dt\right)v\\
 & = \inf_{v:\norm{v}_{2}=1}\int_{0}^{1} v^{T}
    A\left( t\theta_1 - (1-t) \theta_0\right) v dt\\
 & \ge \inf_{\theta \in \allthetadom}
    \lambda_{\mathrm{min}} (A(\theta))
        & \textrm{(}\allthetadom\textrm{ is convex)}\\
 & = \frac{1}
    {\sup_{\theta\in\allthetadom} \norm{ A(\theta)^{-1}}_{op} } \\
 & \ge \frac{1}{C}. & \textrm{(by assumption)}
\end{align*}
\end{proof}
\end{lem}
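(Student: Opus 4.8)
The plan is to reduce the claimed lower bound on the smallest eigenvalue of the averaged matrix to a \emph{pointwise} lower bound on $\lambda_{\mathrm{min}}(A(\theta))$ along the segment joining $\theta_0$ and $\theta_1$, using the variational characterization $\lambda_{\mathrm{min}}(A) = \inf_{\norm{v}_2 = 1} v^T A v$ valid for symmetric matrices together with the fact that in finite dimensions the integral commutes with the quadratic form $v \mapsto v^T(\cdot)v$. Set $\xi(t) := (1-t)\theta_0 + t\theta_1$ and $M := \int_0^1 A(\xi(t))\,dt$. Because $\allthetadom$ is convex (under \assuref{thetadom} it contains, and may be taken to equal, an $\epsilon$-enlargement of a convex hull), every $\xi(t)$ with $t \in [0,1]$ lies in $\allthetadom$; and because $A(\theta) = H(\theta,\onevec)$ is continuous in $\theta$ under \assuref{diffable}, the map $t \mapsto A(\xi(t))$ is continuous, hence integrable, with symmetric positive-definite values, so $M$ is a well-defined symmetric matrix.

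First I would record that, for every $v \in \mathbb{R}^D$, $v^T M v = \int_0^1 v^T A(\xi(t))\,v\,dt$; this is immediate since each entry $M_{ij} = \int_0^1 A(\xi(t))_{ij}\,dt$ is an ordinary scalar integral and $v^T M v$ is a finite linear combination of these entries. Next I would establish the pointwise bound: for any unit vector $v$ and any $t \in [0,1]$, since $\xi(t) \in \allthetadom$, the Rayleigh-quotient inequality together with the identity $\lambda_{\mathrm{min}}(A) = 1/\norm{A^{-1}}_{op}$ for symmetric positive-definite $A$ gives
\begin{align*}
v^T A(\xi(t))\, v \;\ge\; \lambda_{\mathrm{min}}(A(\xi(t))) \;=\; \frac{1}{\norm{A(\xi(t))^{-1}}_{op}} \;\ge\; \frac{1}{\sup_{\theta \in \allthetadom}\norm{A(\theta)^{-1}}_{op}} \;\ge\; \frac{1}{C}.
\end{align*}
Integrating over $t$ and then taking the infimum over unit vectors $v$ yields $\lambda_{\mathrm{min}}(M) = \inf_{\norm{v}_2 = 1} v^T M v \ge 1/C$, the first claim; specializing to $A(\theta) := H(\theta,\onevec)$ and $C := \cop$ via \assuref{convexity} gives the stated consequence for the averaged Hessian. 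Finally, since $M$ is symmetric with $\lambda_{\mathrm{min}}(M) \ge 1/C > 0$, it is positive definite, hence invertible, and $\norm{M^{-1}}_{op} = 1/\lambda_{\mathrm{min}}(M) \le C$.

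I do not anticipate a genuine obstacle: the lemma is essentially the statement that $\lambda_{\mathrm{min}}$, being an infimum of linear functionals and hence concave on symmetric matrices, cannot drop below a pointwise lower bound after averaging (equivalently, $\lambda_{\mathrm{min}}(\int_0^1 A(\xi(t))\,dt) \ge \int_0^1 \lambda_{\mathrm{min}}(A(\xi(t)))\,dt$). The two points that merit a moment's care are the interchange of integration and the quadratic form — harmless here since everything is a finite-dimensional Riemann integral of continuous scalar functions — and the appeal to convexity of $\allthetadom$, which is precisely what guarantees that $A$ is only ever evaluated at points where the uniform bound $\norm{A(\theta)^{-1}}_{op} \le C$ is available, so one should confirm that the segment $\{\xi(t): t \in [0,1]\}$ lies inside the particular $\allthetadom$ chosen to satisfy \assuref{thetadom}.
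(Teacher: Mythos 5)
Your proposal is correct and follows essentially the same route as the paper's own proof: the variational characterization $\lambda_{\mathrm{min}}(M)=\inf_{\norm{v}_2=1}v^TMv$, interchange of the integral with the quadratic form, the pointwise bound via $\lambda_{\mathrm{min}}(A(\theta))=1/\norm{A(\theta)^{-1}}_{op}\ge 1/C$ on the segment (justified by convexity of $\allthetadom$), and then inversion of the resulting positive-definite matrix. The only differences are cosmetic: you make explicit the integrability/continuity bookkeeping and the caveat that $\allthetadom$ must actually be convex (the paper asserts this in passing; it holds when $\allthetadom$ is taken to be the $\epsilon$-enlargement of the convex hull in \assuref{thetadom}), and you correct the sign typo in the paper's convex combination $t\theta_1-(1-t)\theta_0$.
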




\begin{lem}\lemlabel{lipschitz}
%
Under \assuref{bounded},
\begin{align*}
    \sup_{\theta \in \allthetadom}
        \norm{\Hw[\theta, \onevec] - \Hw[\thetaone, \onevec]}_2 \le
        \mk[3] \norm{\theta - \thetaone}_2.
\end{align*}
\end{lem}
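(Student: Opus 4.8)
The plan is to express $\Hw[\theta, \onevec] - \Hw[\thetaone, \onevec]$ as the integral of the $\theta$-gradient of $\Hw[\cdot, \onevec]$ along the straight segment from $\thetaone$ to $\theta$, and then to bound the integrand pointwise with \assuref{bounded}. This is the $k_{max} = 1$ case of the integral-remainder identity behind \lemref{taylor_series}, restricted to that segment, applied to the vectorization of $\vartheta \mapsto \Hw[\vartheta, \onevec]$.

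In detail, fix $\theta \in \allthetadom$ and set $\theta(t) := (1 - t)\thetaone + t\theta$ for $t \in [0, 1]$. First I would check that the whole segment $\{\theta(t) : t \in [0, 1]\}$ lies in $\allthetadom$: since $\onevec \in \wdom$ we have $\thetaone = \thetaw[\onevec] \in \mathrm{Conv}\left(\left\{\thetaw : \wt \in \wdom\right\}\right)$, and taking $\allthetadom$ to be the minimal set permitted by \assuref{thetadom}, namely the open $\epsilon$-enlargement of that convex hull, $\allthetadom$ is itself convex, so both endpoints, and hence the entire segment, belong to $\allthetadom$. By \assuref{diffable}, $\Hw[\cdot, \onevec] = \Gk[1](\cdot, \onevec)$ is continuously differentiable on $\allthetadom$, its $\theta$-gradient being a $\theta$-derivative array of $G$. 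Vectorizing $\Hw[\cdot, \onevec]$ and applying the fundamental theorem of calculus componentwise gives
\[
\Hw[\theta, \onevec] - \Hw[\thetaone, \onevec]
  = \int_0^1 \left(\partialat{\Hw[\vartheta, \onevec]}{\vartheta}{\theta(t)}\right)\left(\theta - \thetaone\right)\, dt ,
\]
where the bracketed array is contracted in one slot with the direction $\theta - \thetaone$.

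What remains is to bound the integrand in $\norm{\cdot}_2$. Contracting any $\theta$-derivative array of $G$ with a single vector $v$ multiplies its $\norm{\cdot}_2$-norm by at most $\norm{v}_2$, by the Cauchy--Schwarz inequality --- exactly the estimate already used in the proof of \lemref{taylor_theta_term_bounds}. Hence the integrand at parameter $t$ has $\norm{\cdot}_2$-norm at most $\norm{\theta - \thetaone}_2$ times the array norm of the $\theta$-gradient of $\Hw[\cdot, \onevec]$ at $\theta(t)$, and \assuref{bounded} bounds this array norm, uniformly over $\allthetadom$, by $\mk[3]$. Pulling the $t$-independent factor $\mk[3]\norm{\theta - \thetaone}_2$ out of the integral over the unit interval and then taking the supremum over $\theta \in \allthetadom$ yields the claim. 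I expect the only point requiring genuine care to be the first step --- keeping the integration path inside $\allthetadom$ so that \assuref{bounded} applies along it --- while the remainder is a routine combination of the fundamental theorem of calculus and Cauchy--Schwarz, so I anticipate no serious obstacle.
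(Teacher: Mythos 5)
The paper gives no proof of \lemref{lipschitz} at all, so there is nothing to compare against line by line; your route --- the fundamental theorem of calculus along the segment from $\thetaone$ to $\theta$, followed by Cauchy--Schwarz and \assuref{bounded} --- is exactly the technique the paper deploys in the analogous places (\lemref{taylor_series} with $k_{max}=1$ in the proof of \corref{theta_difference_bound}, and the contraction estimate in the proof of \lemref{taylor_theta_term_bounds}), and the individual steps you carry out are sound, including the care you take to keep the integration path inside $\allthetadom$ (the paper itself silently assumes convexity of $\allthetadom$ in \lemref{hessian_integral_inverse}).

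The one genuine problem is the constant. The $\theta$-gradient of $\Hw[\cdot, \onevec] = \Gk[1](\cdot, \onevec)$ is $\Gk[2](\cdot, \onevec)$, and \assuref{bounded} bounds $\sup_{\theta\in\allthetadom}\norm{\Gk[2](\theta,\onevec)}_2$ by $\mk[2]$, not by $\mk[3]$; the constant $\mk[3]$ bounds the \emph{third} $\theta$-derivative of $G$. So your argument, carried out correctly, proves the inequality with constant $\mk[2]$, and the sentence ``\assuref{bounded} bounds this array norm \ldots by $\mk[3]$'' is a non sequitur: there is no general ordering between $\mk[2]$ and $\mk[3]$, so the bound as stated does not follow from what you have shown. (This looks like an indexing slip in the paper itself --- the shorthand $\lh := \mk[3]$ presumably counts derivatives of the underlying objective rather than of $G$ --- but a proof of the lemma as stated must either genuinely establish $\mk[3]$ or flag the discrepancy; yours silently asserts it.) A smaller point in the same vein: for $\Gk[2]$ to exist and be covered by \assuref{diffable} and \assuref{bounded} you need $\kij \ge 1$, and for $\mk[3]$ even to be defined you need $\kij \ge 2$, which is worth a remark since the surrounding results are stated for all $\kij \ge 0$.
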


\lemref{lipschitz} replaces the explicit assumption of Lipschitz continuity
of the Hessian in \citet{giordano:2019:ij}.


\section{Omitted proofs}

We now present proofs that were omitted from the main manuscript.


\paragraph{Proof of \corref{theta_difference_bound}}
\begin{proof}\prooflabel{theta_difference_bound}
By applying \lemref{taylor_series} with $k_{max} = 1$ to the expansion of
$G\left(\theta, \onevec\right)$ in $\theta$ around $\thetaone$,
\begin{align*}
G\left(\thetaw,\onevec\right) - G\left(\thetaone,\onevec\right) = &
    \int_{0}^{1}
    H\left(t \thetaw - \left(1 - t\right) \thetaone,
           \onevec\right) \left(\thetaw - \thetaone\right)dt  \\
= & \left(\int_{0}^{1}
    H\left(t \thetaw - \left(1 - t\right) \thetaone, \onevec\right) dt \right)
        \left(\thetaw - \thetaone\right)
\end{align*}
Note that $G\left(\thetaone, \onevec\right)=0$ and that
$\thetaw-\hat{\theta}$ can be taken out of the integral as it does not depend on
$t$. By \assuref{convexity} and  \lemref{hessian_integral_inverse}, we can
solve for $\thetaw - \thetaone$ and take the norm of both sides to get
\begin{align*}
\norm{\thetaw - \thetaone}_{2} &
    = \norm{\left(\int_{0}^{1}
     H\left(t\thetaw - (1-t) \thetaone, \onevec\right)dt\right)^{-1}
     G\left(\thetaw, \onevec\right)}_{2} & \\
 & \le\cop\norm{G\left(\thetaw, \onevec\right)}_{2} &
    \textrm{(\lemref{hessian_integral_inverse})}\\
 & =\cop\norm{G\left(\thetaw, \onevec \right)
    - G\left(\thetaw, \wt\right)}_{2} &
    \textrm{(} \thetaw \textrm{ is the estimator for } \wt \textrm{)}\\
 & \le\cop\deltak[0]. &\textrm{(\assuref{complexity})}
\end{align*}
\end{proof}



\paragraph{Proof of \lemref{weighted_hessian_inverse}}
\begin{proof}\prooflabel{weighted_hessian_inverse}
We wish to use \lemref{operator_norm_continuity} to bound
$\sup_{\wt\in\wdom}\norm{\Hw^{-1}}_{op}$ in terms of $\cop =
\sup_{\theta\in\allthetadom}\norm{\Hw[\theta,\onevec]^{-1}}_{op}$. To do
so, we must control $\norm{\Hw[\theta,\wt] - \Hw[\thetaone,\onevec]}_{2}$.
For any $\theta \in \allthetadom$, we have
\begin{align*}
\MoveEqLeft \norm{\Hw[\theta,\wt] - \Hw[\thetaone,\onevec]}_{2} \\
 & = \norm{\Hw[\theta,\wt] - \Hw[\theta,\onevec] +
           \Hw[\theta,\onevec] - \Hw[\thetaone,\onevec]}_{2} &\\
 & \le \norm{\Hw[\theta,\wt] - \Hw[\theta,\onevec]}_{2} +
       \norm{\Hw[\theta,\onevec] - \Hw[\thetaone,\onevec]}_{2}
    & \textrm{ (triangle inequality)}\\
 & \le \deltak[1] +
       \norm{\Hw[\theta,\onevec] - \Hw[\thetaone,\onevec]}_{2}
    & \textrm{(\assuref{complexity})}\\
     & \le \deltak[1]+ \mk[3] \norm{\theta-\thetaone}_{2}.
    & \textrm{(\lemref{lipschitz})}
\end{align*}
Consequently,
\begin{align*}
\MoveEqLeft
\sup_{\wt\in\wdom} \norm{\Hw[\thetaw,\wt] - \Hw[\thetaone,\onevec]}_{1} \\
    &\le \deltak[1] +
         \lh \sup_{\wt\in\wdom} \norm{\thetaw-\thetaone}_{2}
    &\\
 & \le \deltak[1] + \lh\cop\deltak[0].
    & \textrm{(\corref{theta_difference_bound})}
\end{align*}
In order to apply \lemref{operator_norm_continuity}, we require
$\sup_{\wt\in\wdom} \norm{\Hw[\theta,\wt] - \Hw[\thetaone,\onevec]}_{2}
\le \deltabound \cop^{-1}$ for some $0 < \deltabound < 1$.  By the
previous display, this will occur when
\begin{align*}
\deltak[1] + \lh\cop\deltak[0] &\le \deltabound \cop^{-1} \Leftrightarrow \\
\cop \deltak[1] + \cop^2 \lh \deltak[0] &\le \deltabound \Leftrightarrow \\
\cset &\le \deltabound.
\end{align*}
By design, this is precisely what \condref{theta_small} requires.  Thus, under
\condref{theta_small}, by \lemref{operator_norm_continuity},
\begin{align*}
\sup_{\wt\in\wdom} \norm{\Hw[\thetaw, \wt]^{-1}}_{op} &
    \le \frac{1}{1 - \deltabound}\cop.
\end{align*}
The final result follows by multiplying both sides of the left inequality of
the previous display by $\cop$.
\end{proof}



\paragraph{Proof of \lemref{implicit_function_thm}}
\begin{proof}\prooflabel{implicit_function_thm}
The proof uses Theorem 2 of \citet{sandberg:1980:globalinverse} together with a
standard technique relating inverse theorems to implicit function theorems (see,
e.g., Proposition 1 of \citet{sandberg:1980:globalinverse} or Section 3.3 of
\citet{krantz:2012:implicit}).

Define $\gimp(w, \theta): \wdom \times \allthetadom \rightarrow \mathbb{R}^{N +
D}$ by $\gimp(w, \theta) = (w, G(\theta, w))$.  By \assuref{diffable}, $\gimp$
is $\kij + 1$ times continuously differentiable.  Let $\zerovec$ be the
length---$D$ vector of zeroes, and $z \in \mathbb{R}^D$ ($z$ is for ``zero'').
For all $\omega \in \wdom$ wish to consider inverting the equation $\gimp(\wt,
\theta) = (\omega, \zerovec)$, so that (if the inverse exists), $\gimp^{-1}(\wt,
\zerovec) = (\omega, \thetaw)$.  Establishing the existence and
differentiability of $\gimp^{-1}$ will establish \lemref{implicit_function_thm}.

Let $\Omega_z$ be an open neighborhood of $\zerovec$. Theorem 2 of
\citet{sandberg:1980:globalinverse} states that the inverse $\gimp^{-1}$ exists
and is $\kij + 1$ times continuously differentiable on $\wdom \times \zdom$ if,
for all $\omega \in \wdom$, $\theta \in \allthetadom$, and $z \in \zdom$,

\begin{enumerate}
\item $\wdom \times \allthetadom$ is an open set homeomorphic to
      $\mathbb{R}^{N + D}$,
\item $\mathrm{det}
    \left(\partialat{\gimp(w, \theta)}{(w, \theta)}{\wt, \theta}
        \right) \ne 0$, and
\item
For $\omega_0, \omega_z \in \wdom$ and $\theta_0, \theta_z \in \allthetadom$,
if $\gimp(\wt_0, \theta_0) = (\omega_0, \zerovec)$,
$\gimp(\wt_z, \theta_z) = (\omega_z, z)$, and
$(\omega_z, z)$ is close to $(\omega_0, \zerovec)$, then $(\wt_z, \theta_z)$ is
close to $(\wt_0, \theta_0)$.  (See below for a precise statement.)
\end{enumerate}

The first condition is satisfied by \defref{wdom} and \assuref{thetadom}.
Because $\partialat{\gimp(w, \theta)}{w}{\wt_0, \theta_0} = (I_N, 0)$, where
$I_N$ is the $N\times N$ identity matrix, the second condition is equivalent to
$\mathrm{det} \left(\partialat{G(\theta, w)}{\theta}{\wt_0, \theta_0}\right) \ne
0$, which is implied by \lemref{weighted_hessian_inverse}.

The final condition amounts to the requirement that $\gimp(\wt, \theta)$ is a
proper map.  Specifically, we need to show that, for any $\epsilon > 0$, if
$\norm{ (\omega_z, z) - (\omega_0, \zerovec)}_2 = \norm{\omega_z - \omega_0}_2 +
\norm{z - \zerovec}_2\le \epsilon$, then there exists a $\delta > 0$ such that
$\norm{ (\wt_z, \theta_z) - (\wt_0, \theta_0)}_2 = \norm{\wt_z - \wt_0}_2 +
\norm{\theta_z - \theta_0}_2\le \delta$.

By applying \lemref{taylor_series}  with $k_{max} = 1$ to the expansion of
$G(\theta, \wt)$ in $\theta$ around $\theta_0$ at $\wt_0$,
\begin{align}\eqlabel{implicit_fn_integral}
G(\theta_z, w_z) - G(\theta_0, w_0) \nonumber
= & z - \zerovec \\
= & \left(\int_{0}^{1}
    H(t \theta_z - (1 - t) \theta_0, \wt_0) dt \right) (\theta_z - \theta_0).
\end{align}
By \lemref{weighted_hessian_inverse} and \lemref{hessian_integral_inverse},
we can solve for $\theta_z - \theta_0$ and take norms of both sides to get
\begin{align*}
\norm{\theta_z - \theta_0}_2
\le & \copw \norm{z - \zerovec}_2
    & \textrm{(\lemref{weighted_hessian_inverse} and
               \lemref{hessian_integral_inverse} )}\\
\le & \copw \epsilon.
    & \textrm{(by assumption and the triangle inequality)}
\end{align*}
Consequently, taking $\delta = (1 + \copw) \epsilon$ gives
$\norm{\wt_z - \wt_0}_2 + \norm{\theta_z - \theta_0}_2 \le \delta$, completing
the proof.

Intuitively, \eqref{implicit_fn_integral} shows that, if $\norm{\theta_z -
\theta_0}_2$ is to be large, then the Hessian matrix $H(t \theta_z - (1 - t)
\theta_0, \wt_0)$ must be small somewhere on the line between $\theta_z$ and
$\theta_0$.  This is prohibited, however, by \assuref{convexity}, which lower
bounds the minimum eigenvalue of $H(\theta, \onevec)$ for all $\theta \in
\allthetadom$, and by \lemref{weighted_hessian_inverse} (depending on the set
complexity bound of \condref{theta_small}), which asserts that the smallest
eigenvalue of of re-weighted versions of $H(\theta, w)$ are never too much
smaller than the smallest eigenvalue of $H(\theta, \onevec)$.

\end{proof}



\paragraph{Proof of \lemref{higher_order_general_terms}}
\begin{proof}\prooflabel{higher_order_general_terms}

The proof will proceed by induction on $k$.
The inductive assumption will be that, for some $k \ge 1$, then the
$k$-th order directional derivative of $G(\theta, w)$ is of the form
\begin{align}\eqlabel{dgk_form_inductive_assumption}
\derivat{{}^{k} G(\thetaw[w], w)}{w \ldots dw}{\wt} \deltaw \ldots \deltaw & =
    \dterm[\{ k \}][0][\wt] +
    \sum_{(a_i, \kvec_i, \omega_i) \in \dtermargs[k]}
            \dterm[\kvec_{i}][\omega_i][\wt],
\end{align}
where elements of $\dtermargs[k]$ satisfy the conditions
\eqref{dtheta_term_properties} and \eqref{dtheta_term_total_order}.

Intuitively, \eqref{dgk_form_inductive_assumption} states that there is only one
way a term containing $\dtheta[k]$ can arise in $\derivat{{}^{k} G(\thetaw[w],
w)}{w \ldots dw}{\wt} \deltaw \ldots \deltaw$, and that is by differentiating
the $\dtheta[k-1]$ part of a term of the form $\dterm[\{k-1\}][0][\wt]$. The
property \eqref{dtheta_term_properties} then follows from solving for
$\dtheta[k]$ in said term, and \eqref{dtheta_term_total_order} states the
intuitively obvious fact that, after differentiating $G(\thetaw, w)$ $k$ times,
each resulting term must contain a total of precisely $k$ directional
derivatives.  The following proof is simply a formalization of this intuition.

The case $k=1$ follows from \eqref{dgw_term_form}, which we reproduce here for
convenience.
\begin{align}\eqlabel{dgw_term_form_appendix}
\derivat{G(\thetaw[w], w)}{w}{\wt} \deltaw &=
    \dterm[\{1\}][0][\wt] + \dterm[\emptyset][1][\wt].
\end{align}
Recall that \eqref{dgw_term_form} is simply a rewrite of \eqref{dg1}. Here,
$\dtermargs[1] = \{ (a_1=1, \kvec_1=\emptyset, \omega_1=1) \}$, corresponding to
the second term in \eqref{dgw_term_form_appendix}.  The first term in
\eqref{dgw_term_form_appendix}, $\dterm[\{1\}][0][\wt]$, is of the required form
$\dterm[\{k\}][0][\wt]$ for $k=1$.

Next, we show that if \eqref{dgw_term_form} holds for $k$, it holds for $k + 1$.
By the chain rule, the directional derivative of a generic term
$\dterm[\kvec][\omega][\wt]$ in the direction $\deltaw$ is given by
\begin{align}
\MoveEqLeft
\derivat{\dterm[\kvec][\omega][w]}{w}{\wt}\deltaw  = & \nonumber \\
& \sum_{k\in\kvec} \dterm[\kvec\setminus \{ k \} \bigcup \{k + 1\}][\omega][\wt] +
    &\textrm{(} \dtheta[k] \textrm{ terms)}
    \eqlabel{differentiation_term_dtheta} \\
& \dterm[\kvec \bigcup \{1\}][\omega][\wt] +
    &\textrm{(the first argument of } \Gk[|\kvec|](\thetaw, \wt)\textrm{)}
    \eqlabel{differentiation_term_dgfirst}\\
&(1 - \omega) \dterm[\kvec][1][\wt].
    &\textrm{(the second argument of } \Gk[|\kvec|](\thetaw, \wt)\textrm{)}
    \eqlabel{differentiation_term_dgsecond}
\end{align}

We first show that, if $\dterm[\kvec][\omega][\wt]$ satisfies
\eqref{dtheta_term_properties, dtheta_term_total_order} for $k \ge 1$, then
$\derivat{\dterm[\kvec][\omega][w]}{w}{\wt}\deltaw$ satisfies
\eqref{dtheta_term_properties, dtheta_term_total_order} for $k + 1$. First
of all, by \eqref{dtheta_term_properties} each element of $\kvec$ is at most
$k$ and increases by at most $1$, so, after differentiation, no elements of any
$\kvec$ exceed $k + 1$, satisfying \eqref{dtheta_term_properties} for $k + 1$.
Second, for every term in \eqref{differentiation_term_dtheta} and
\eqref{differentiation_term_dgfirst}, the sum of elements of $\kvec$ and
$\omega$ increases by exactly one to $k + 1$, satisfying
\eqref{dtheta_term_total_order}. Finally, for the final term in
\eqref{differentiation_term_dgsecond}, the sum of the elements of $\kvec$
remains $k$, but we have either changed $\omega$ from $0$ to $1$ or eliminated
the term because $\omega = 1$. Consequently,
\eqref{dtheta_term_properties, dtheta_term_total_order} are satisfied for
each term in $\derivat{\dterm[\kvec][\omega][w]}{w}{\wt} \deltaw$.

Next, if $\dterm[\kvec][\omega][\wt]=\dterm[\{k\}][0][\wt]$,
then exactly one term arises from the sum in
\eqref{differentiation_term_dtheta}, and that is $\dterm[\{k+1\}][0][\wt]$.
The remaining terms from \eqref{differentiation_term_dgfirst} satisfy
\eqref{dtheta_term_properties, dtheta_term_total_order} by the
same argument as the preceding paragraph.

Finall, by induction, \eqref{dgw_term_form} holds for all $k\ge1$. The constants
$a_{i}$ in $\dtermargs[k+1]$ will arise from the corresponding coefficients in
$\dtermargs[k]$ and grouping equivalent terms.

Because $\dterm[\{k\}][\wt][\wt] = \Hw \dtheta[k]$ and, by
\lemref{weighted_hessian_inverse}, $\Hw$ is invertible, we can solve
\eqref{dgw_term_form} for $\dtheta[k]$:
\begin{align*}
\dtheta[k] &= -\Hw^{-1}
\sum_{(a_i, \kvec_i, \omega_i) \in \dtermargs[k]}
    a_i \dterm[\kvec_{i}][\omega_i][\wt]
\end{align*}
which is the desired result.
\end{proof}



\paragraph{Proof of \lemref{taylor_theta_term_bounds}}
\begin{proof}\prooflabel{taylor_theta_term_bounds}

Applying Cauchy-Schwartz and the relationship between the L1 and L2 norms
inductively,
\begin{align*}
\MoveEqLeft
\norm{  \Gk[K](\thetaw, \wt) \dtheta[k_1] \ldots \dtheta[k_K]    }_1  \\
& \le   \norm{  \Gk[K](\thetaw, \wt) \dtheta[k_1]\ldots\dtheta[k_{K-1}]    }_2
    \norm{  \dtheta[k_K] }_2
    & \textrm{(Cauchy-Schwartz)}\\
&\le   \norm{  \Gk[K](\thetaw, \wt) \dtheta[k_1]\ldots\dtheta[k_{K-1}]    }_1
    \norm{  \dtheta[k_K] }_2
    & \textrm{(relationship between norms)}\\
&\le   \norm{  \Gk[K](\thetaw, \wt)   }_2 \prod_{i=1}^{K}\norm{  \dtheta[k_i] }_2.
    & \textrm{(induction on }k\textrm{)}
\end{align*}
Next,
\begin{align*}
\MoveEqLeft
\norm{  \Gk[K](\thetaw, \wt)    }_1\\
&  \le   \norm{  \Gk[K](\thetaw, \wt) - \Gk[K](\thetaw, \onevec)    }_2 +
    \norm{  \Gk[K](\thetaw, \onevec)    }_2
    & \textrm{(triangle inequality)} \\
&  \le  \deltak[K] + \norm{  \Gk[K](\thetaw, \onevec)    }_2
    & \textrm{(\assuref{complexity})} \\
&   \le \deltak[K] + \mk[K]. & \textrm{\assuref{bounded}}
\end{align*}
Following the same reasoning, we have
\begin{align*}
\MoveEqLeft
\norm{  \Gk[K](\thetaw, \deltaw)\dtheta[k_1]\ldots\dtheta[k_K]    }_1 \\
& \le   \norm{  \Gk[K](\thetaw, \deltaw)   }_2
    \prod_{i=1}^{K}\norm{  \dtheta[k_i] }_2 & \\
& \le   \deltak[K] \prod_{i=1}^{K}\norm{  \dtheta[k_i] }_2. &
    \textrm{(\assuref{complexity})}
\end{align*}
The desired result follows.
\end{proof}



\paragraph{Proof of \thmref{thetaij_consistent}}
\begin{proof}\prooflabel{thetaij_consistent}
The proof proceeds by induction on $k$.  \Thmref{thetaij_consistent} holds
for $k=1$ by \corref{d2theta_bounded} and \corref{thetaij1_consistent}.

Suppose that \thmref{thetaij_consistent} holds for $1 \le k < \kij$.  Then
\begin{align*}
\MoveEqLeft
\norm{\dtheta[k+1]}_2 = \norm{\Hw^{-1}
    \left(\sum_{i}a_{i} \dterm[\kvec_i][\omega_i][\wt] }_2 \right)
    &\textrm{(\lemref{higher_order_general_terms})}&\\
&\le \copw \norm{\sum_{i}a_{i}\dterm[\kvec_i][\omega_i][\wt]}_2
   &\textrm{(\lemref{weighted_hessian_inverse})} \\
&\le \copw \sum_{i}a_{i}\norm{\dterm[\kvec_i][\omega_i][\wt]}_2
    &\textrm{(triangle inequality)}\\
& \le \copw
    \sum_{(a_i, \kvec_i, \omega_i) \in \dtermargs[k]}
    a_{i} (\deltak[|\kvec_i|] + (1 - \omega_i) \mk[|\kvec_i|])
        \prod_{\tilde{k} \in \kvec_i} \norm{\dtheta[\tilde{k}]}_2
    &\textrm{(\lemref{taylor_theta_term_bounds}, relation between norms)}\\
& =
    \sum_{(a_i, \kvec_i, \omega_i) \in \dtermargs[k]}
    (\deltak[|\kvec_i|] + (1 - \omega_i) \mk[|\kvec_i|])
        \prod_{\tilde{k} \in \kvec_i} O\left(\deltamax^{\tilde{k}}\right)
    &\textrm{(inductive assumption)}\\
& =
    \sum_{(a_i, \kvec_i, \omega_i) \in \dtermargs[k]}
    O\left( \deltamax^{\omega_i + \sum_{\tilde{k} \in \kvec_i}\tilde{k}}\right)
    & \textrm{(}\mk[|\kvec_i|]=O(1)\textrm{ by \assuref{bounded})}\\
& = O\left(\deltamax^{k+1} \right).
    &\textrm{(\lemref{higher_order_general_terms})}\\
\end{align*}
Then $\norm{\thetaij - \thetaw}_2 = O\left( \deltamax^{k + 1} \right)$ follows
from \lemref{taylor_series}.
\end{proof}



%
\paragraph{Proof of \propref{linear_bootstrap}}
\begin{proof}\prooflabel{linear_bootstrap}
Let $I_N$ denote the $N$-dimensional identity matrix.  By standard properties
of the multinomial distribution,
$\cov_b(w_b) = I_N - \frac{1}{N}\onevec \onevec^T$.  Consequently,
\begin{align*}
\cov_b\left(\thetaij[1][w_b]\right) &=
    \cov_b\left(-\Hone^{-1} G(\thetaone, w_b - \onevec)\right) \\
&= \Hone^{-1} (\gk[0])^T \cov_b\left(w_b\right) \gk[0] \Hone^{-1} \\
&= \Hone^{-1} (\gk[0])^T
    \left(I_N - \frac{1}{N}\onevec \onevec^T\right) \gk[0] \Hone^{-1} \\
&= \Hone^{-1} \widehat{\cov}(g_n(\thetaone)) \Hone^{-1}.
\end{align*}
\end{proof}


\end{document}